\newcommand{\grau}{gray!60}
\newenvironment{grform}{\begin{tikzpicture}[intext]}{\end{tikzpicture}}
\tikzset{
norm/.style = {ultra thick, color = #1},
norm/.default = black,
intext/.style = {baseline = (current bounding box.center)},
}
\newcommand{\vLine}[5]{
  \draw[ultra thick, color = #5, rounded corners] (#1 , #2) -- (#1 , #2 * 0.7 +
#4 * 0.3 ) -- ( #3 , #2 * 0.3 + #4 * 0.7 ) -- (#3 ,#4);
}
\newcommand{\vLineO}[5]{
  \draw[line width = 6pt , color = white, rounded corners] (#1 , #2) -- (#1 , #2
* 0.7 + #4 * 0.3 ) -- ( #3 , #2 * 0.3 + #4 * 0.7 ) -- (#3 ,#4);
  \draw[ultra thick, color = #5, rounded corners] (#1 , #2) -- (#1 , #2 * 0.7 +
#4 * 0.3 ) -- ( #3 , #2 * 0.3 + #4 * 0.7 ) -- (#3 ,#4);
}
\newcommand{\dSkewantipode}[3]{
\draw[ultra thick, color = #3] (#1 , #2) -- (#1 , #2 + 1);
\filldraw[ultra thick, color = #3, fill = gray] (#1 , #2 + 0.5) circle [radius =
8pt];
}
\newcommand{\dAction}[6]{
\draw[ultra thick, color = #5] (#1 , #2) -- (#1 , #2 + #4 * 0.2) .. controls (#1
, #2 + #4 * 0.5) .. (#1 + #3 * 0.8 , #2 + #4 * 0.5) -- (#1 + #3 , #2 + #4 *
0.5);
\draw[ultra thick, color = #6] (#1 + #3 , #2) -- (#1 + #3 , #2 + #4);
}
\newcommand{\dPairing}[6]{
\draw[ultra thick, color = #5] 
(#1 , #2) -- 
(#1 , #2 + #4 - 1) .. controls (#1 , #2 + #4 - 0.5) .. (#1 + #3 * 0.48 , #2 + #4
- 0.5) -- 
(#1 + #3 * 0.52 , #2 + #4 - 0.5) .. controls (#1 + #3 , #2 + #4 - 0.5) .. (#1 +
#3 , #2 + #4 - 1) -- 
(#1 + #3 , #2);
\draw (#1 + #3 * 0.5 , #2 + #4 - 0.5) node [fill = white] {#6};
}
\newcommand{\dCopairing}[6]{
\draw[ultra thick, color = #5] 
(#1 , #2) -- 
(#1 , #2 - #4 + 1) .. controls (#1 , #2 - #4 + 0.5) .. (#1 + #3 * 0.48 , #2 - #4
+ 0.5) -- 
(#1 + #3 * 0.52 , #2 - #4 + 0.5) .. controls (#1 + #3 , #2 - #4 + 0.5) .. (#1 +
#3 , #2 - #4 + 1) -- 
(#1 + #3 , #2);
\draw (#1 + #3 * 0.5 , #2 - #4 + 0.5) node [fill = white]
{#6};
}
\theoremstyle{plain}
\newtheorem{theorem}{Theorem}
\numberwithin{theorem}{section}
\newtheorem{question}[theorem]{Question}
\newtheorem{corollary}[theorem]{Corollary}
\newtheorem{definition}[theorem]{Definition}
\newtheorem{example}[theorem]{Example}
\newtheorem{lemma}[theorem]{Lemma}
\newtheorem{fact}[theorem]{Fact}
\newtheorem{remark}[theorem]{Remark}
\newtheorem*{exampleX}{Example}
\renewcommand{\H}{\mathrm{H}}
\newcommand{\gr}{\mathrm{gr}}
\newcommand{\DG}{{DG}}
\renewcommand{\mod}{\mathrm{mod}}
\newcommand{\comod}{\mathrm{comod}}
\newcommand{\coin}{\mathrm{coin}}
\newcommand{\rank}{\mathrm{rank}}
\newcommand{\Aut}{\mathrm{Aut}}
\newcommand{\Rep}{\mathrm{Rep}}
\newcommand{\Bigal}{\mathrm{Bigal}}
\newcommand{\BrPic}{\mathrm{BrPic}}
\newcommand{\EqMon}{\mathrm{Eq_{mon}}}
\newcommand{\EqBr}{\mathrm{Eq_{br}}}
\newcommand{\EqBrN}{\mathrm{Eq_{br}^0}}
\newcommand{\AutMon}{\mathrm{Aut_{mon}}}
\newcommand{\AutBr}{\mathrm{Aut_{br}}}
\newcommand{\IsoHopf}{\mathrm{Iso_{Hopf}}}
\newcommand{\AutHopf}{\mathrm{Aut_{Hopf}}}
\newcommand{\OutHopf}{\mathrm{Out_{Hopf}}}
\newcommand{\End}{\mathrm{End}}
\newcommand{\Cent}{\mathrm{Cent}}
\newcommand{\Vect}{\mathrm{Vect}}
\newcommand{\Out}{\mathrm{Out}}
\newcommand{\id}{\mathrm{id}}
\newcommand{\sgn}{\mathrm{sgn}}
\newcommand{\D}{\mathrm{D}}
\newcommand{\DD}{\mathbb{D}}
\renewcommand{\SS}{\mathbb{S}}
\newcommand{\F}{\mathbb{F}}
\newcommand{\Z}{\mathrm{Z}}
\newcommand{\Ind}{\mathrm{Ind}}
\newcommand{\B}{\mathrm{B}}
\newcommand{\GL}{\mathrm{GL}}
\newcommand{\Sp}{\mathrm{Sp}}
\renewcommand{\sl}{\mathfrak{sl}}
\newcommand{\g}{\mathfrak{g}}
\renewcommand{\O}{\mathrm{O}}
\newcommand{\ZZ}{\mathbb{Z}}
\newcommand{\NN}{\mathbb{N}}
\newcommand{\CC}{\mathbb{C}}
\newcommand{\cat}{\mathcal{C}}
\newcommand{\dcat}{\mathcal{D}}
\newcommand{\ocat}{\mathcal{O}}
\newcommand{\ncat}{\mathcal{N}}
\newcommand{\mcat}{\mathcal{M}}
\newcommand{\pcat}{\mathcal{R}}
\newcommand{\wcat}{\mathcal{W}}
\newcommand{\zcat}{\mathcal{Z}}
\newcommand{\vcat}{\mathcal{V}}
\newcommand{\md}{\text{-}}
\newcommand{\Nic}{\mathfrak{B}}
\newcommand{\bcat}{\mathcal{BV}}
\newcommand{\ecat}{\mathcal{EV}}
\renewcommand{\D}{\mathrm{D}}
\newcommand{\hamburger}[4] 
{
  \thispagestyle{empty}
  \vspace*{-2cm}
   \begin{flushright}
     ZMP-HH #2 \\
     Hamburger Beitr\"age zur Mathematik Nr. #3 \\
     #4 \\
   \end{flushright}
  \vspace{0.5cm}
  \begin{center}
    \Large \bf
    #1
  \end{center}
  \vspace{0.5cm}
  \begin{center}        
    Simon Lentner, Jan Priel \\
    Fachbereich Mathematik, Universit\"at Hamburg \\
    Bereich Algebra und Zahlentheorie \\
    Bundesstra\ss e 55, D-20146 Hamburg \\
  \end{center}
  \vspace{0.5cm}
}
\begin{document}

\hamburger{Three natural subgroups of the Brauer-Picard group \\ of a 
Hopf algebra with applications}{17-8}{646}{February 2017}
\thispagestyle{empty}
\enlargethispage{1cm}

\begin{abstract}
In this article we construct three explicit natural subgroups of the  
Brauer-Picard group of the category of representations of a  
finite-dimensional Hopf algebra. In examples the Brauer Picard group decomposes 
into an ordered product of these subgroups, somewhat similar to a Bruhat 
decomposition.

Our construction returns for any Hopf algebra three types of braided
autoequivalences and correspondingly three families of invertible bimodule
categories. This gives examples of so-called (2-)Morita
equivalences and defects in topological field theories. We have a closer 
look at the case of quantum groups and Nichols algebras and give 
interesting applications. Finally, we briefly discuss the three families of 
group-theoretic extensions.
\end{abstract}


\makeatletter
\@setabstract
\makeatother

\section{Introduction}

For a finite tensor category $\cat$ the \emph{Brauer-Picard group}
$\BrPic(\cat)$ is defined as the group of equivalence classes of invertible 
exact $\cat$-$\cat$-bimodule categories. This group is an important invariant of 
the tensor
category $\cat$ and appears at essential places such as group-theoretic 
extension of $\cat$ and as defects in mathematical physics, 
see applications below.
By a result in \cite{ENO09}\cite{DN12} the group is isomorphic to braided 
autoequivalences of the Drinfeld center $\BrPic(\cat) \cong 
\Aut_{br}(\zcat(\cat))$; this will be crucial in what follows.\\

Computing the Brauer-Picard group, even for $\cat=\Rep(G)$ or equivalently 
$\cat=\Vect_G$ for a finite group $G$, is already an interesting and 
non-trivial task, see 
\cite{ENO09} \cite{NR14} \cite{FPSV14} \cite{LP15} \cite{MN16}. The group 
multiplication is particularly hard to pin down.
For $\cat=H\md\mod$ with $H$ an arbitrary Hopf algebra, not much is 
known besides few examples, see \cite{FMM14} \cite{Mom12} \cite{BN14} 
\cite{ZZ13}.\\

In \cite{LP15} we have proposed an approach to calculate $\BrPic(\cat)$
for $\cat=H\md\mod$ by defining certain natural subgroups\footnote{We choose 
these names $\ecat,\bcat$ for compatibilities with previous conventions. Be 
advised that $\vcat$ does {\bf not} necessarily have complement subgroups 
$\mathcal{B},\mathcal{E}$ in $\bcat,\ecat$ in the most general cases.} 
$\bcat,\ecat$ with intersection $\vcat$ and a set of elements $\pcat$, such that 
the Brauer Picard group may decompose as a Bruhat-alike decomposition 
$$\BrPic(\cat)=\bigcup_{r\in\pcat} \mathcal{BV\;EV}\;r$$
In {\it cit. loc.} we have proven such a decomposition for 
the case $H=\CC[G]$ for elements fulfilling an additional restriction 
(laziness). Moreover we checked the decomposition in all available examples by 
hand. It is unclear at this point if it is true in general.

\noindent
The intuition arises from 

\begin{exampleX}[Sec. \ref{sec_Fp_AutBr}]
Let $G\cong \ZZ_p^n$ with $p$ a prime number. Our decomposition reduces to the 
Bruhat decomposition
of $\BrPic(\Vect_G)$, which is the Lie group 
$\O_{2n}(\F_p)$ over the finite field $\F_p$. In this case $\bcat,\ecat$ are lower and upper triangular matrices, 
intersecting in the subgroup $\vcat=\GL_n(\F_p)$. The partial dualizations are 
Weyl group elements. More precisely, our result reduces to the Bruhat 
decomposition of the Lie groups $D_n$ relative to the parabolic 
subsystem $A_{n-1}$, so reflections are actually equivalence classes 
corresponding to $n+1$ cosets of the parabolic Weyl group. 
\end{exampleX}

The present article is devoted to start the discussion of the more general 
case $\cat=H\md\mod$. We shall not try to prove a decomposition theorem, but 
focus our attention on establishing and discussing the expected natural 
subgroups $\vcat,\bcat,\ecat,\langle\pcat\rangle$ of the Brauer Picard group. 
We will also briefly discuss several interesting applications of our results, 
in particular when $H$ is the Borel part of a quantum group resp.  a 
Nichols algebra.\\

\noindent
In Section \ref{sec_category} we briefly recall the induction functor and the 
ENOM-functor \cite{ENO09}
$$\AutMon(\cat)\to \BrPic(\cat)
\qquad 
\BrPic(\cat)\stackrel{\sim}{\rightarrow}\AutBr(\zcat(\cat))$$
In view of 
interesting examples and the applications to defects in mathematical physics 
and Nichols algebras we state the obvious generalization of these concepts to 
the groupoid setting, so that arbitrary monoidal 
equivalences $\cat\stackrel{\sim}{\to}\dcat$ give rise to invertible 
$\cat$-$\dcat$-bimodule categories, and these are in bijection to braided 
equivalences $\zcat(\cat)\stackrel{\sim}{\to}\zcat(\dcat)$.\\    

\noindent
In Section \ref{sec_subgroups} we define and derive for each subgroup 
$\vcat,\bcat,\ecat$ and the subset $\pcat$ explicit 
expressions for the braided autoequivalence as 
well as the invertible bimodule categories.

On one hand $\bcat$ resp. $\ecat$ are obtained using induction functors from 
$H\md\mod$ resp. $H^*\md\mod$. So the bimodule categories in 
$\BrPic(H\md\mod)$ resp. $\BrPic(H^*\md\mod)$ are given by definition. We then 
calculate explicitly the images under the ENOM functor using Bigalois objects 
and finally we describe again the preimage of $\ecat$ now in 
$\BrPic(H\md\mod)$. As linear categories, the bimodule categories in $\bcat$ are 
all equal to $\cat$, while the bimodule categories in $\ecat$ are representation 
categories of Bigalois objects, as in \cite{FMM14}.

On the other hand the set of elements $\pcat$ is defined as partial 
dualizations on the $\AutBr$-side of the functor as obtained by the first 
author in \cite{BLS15}. There are two types of partial 
dualization, for every way to decompose $H=K\rtimes A$ into a (semidirect) 
Radford-biproduct. As linear categories, the bimodule categories in $\pcat$ are 
representations of semidirect factors of $H$ (so they may be significantly 
``smaller'', down to $\Vect$) but with a largely nontrivial bimodule category 
structure $(V.M).W\stackrel{\sim}{\longrightarrow} V.(M.W)$.\\

\noindent
In Section \ref{sec_examples} we discuss examples: Mostly we work out the 
result for $\cat=\Vect_G$, which has been discussed extensively. In particular 
we discuss how our bimodule categories look in the explicit description of 
\cite{ENO09}\cite{Dav10}. Then we thoroughly discuss the case where $H$ is the 
Taft algebra and compare our results with \cite{FMM14}.\\

\noindent
In Section \ref{sec_applications} we discuss applications:
\begin{enumerate}[a)]
 \item First we discuss interesting types of bimodule categories that arise 
from our constructions for a Nichols algebra $H=\Nic(M)\rtimes \CC[G]$. 
This includes for example the quantum group   Borel parts $U_q^\geq(\g)$ 
resp. $u_q^\geq(\g)$. 
  
  First, due to the 
  Bigalois objects there are interesting elements in $\bcat,\ecat$ related to 
  different {\it liftings} of quantum groups, most of which have non-equivalent 
representation categories $\cat,\dcat,\ldots$ but are   connected by invertible 
bimodule categories. 
  
  Even more interesting are the 
  partial dualizations: We may either dualize on the Cartan part $\CC[G]$, then 
  we obtain invertible bimodule categories between different {\it forms} of 
  $u_q^\geq(\g)$ e.g. between the adjoint and the simply-connected form. 
  
  Alternatively we we may dualize on parabolic sub-Nichols algebras, then 
  partial dualization reduces to the usual Weyl group reflection of the quantum 
  group. In this way we get invertible bimodule categories connecting different 
  choices of positive roots, and as linear categories these are representations 
  of coideal subalgebras.
  
  At last, we remark that the $\Aut_{br}$-side of all these elements, which we 
  have worked out explicitly in the previous sections, give rise to braided 
  autoequivalences of the representation category of the full quantum group.  \\
  
  \item An interesting application to mathematical physics are defects:  
  (Bi-)module categories appear as boundary conditions and defects in 
  $3d$-TQFT, in particular the Brauer-Picard group is the symmetry group of 
  such theories, see \cite{FSV13},\cite{FPSV14}.
  
  Our results give three systematic, generic families of examples for 
  such defects.   More importantly, they give many examples of invertible 
  bimodule categories between different categories. In a general TQFT the 
  defects separate different regions of space, which can be labeled by 
  different categories. Particularly interesting in this matter are again the 
  concrete examples arising from quantum groups.\\
  
  \item Finally, a leading motivation for the consideration of the 
  Brauer Picard group is, that group-theoretic extensions of 
  categories are parametrized by group homomorphisms into the Brauer Picard 
  group \cite{ENO09}. We close this article by briefly discussing, which types 
  of categories arise for our three subgroups. 
  
  This includes representations 
  of the folded Nichols algebras over nonabelian groups constructed by the 
  first author in \cite{Len12}.
\end{enumerate}

\section{Categorical Setup}\label{sec_category}

\noindent
Let $\cat,\dcat,\ldots$ be finite tensor categories with base field $k=\CC$.
\begin{definition}
  The \emph{Brauer Picard Groupoid} $\BrPic$ has as objects tensor categories 
  $\cat,\dcat,\ldots$ and as morphisms equivalence classes of exact invertible 
bimodule 
  categories $_\cat\mcat_{\dcat}$ and as composition the relative Deligne 
tensor 
product $(_\cat\mcat_\dcat )\boxtimes_\dcat (_\dcat\ncat_\mathcal{E})$.\\
  The automorphism group of an object $\cat$  is the \emph{Brauer Picard group} 
  $\BrPic(\cat)$. Categories $\cat,\dcat$ for which there exists an isomorphism 
  $_\cat\mcat_{\dcat}$ are called \emph{(2-) Morita equivalent}
\end{definition}
\begin{definition}
  The \emph{monoidal equivalence groupoid} $\EqMon$ has as objects finite  
tensor categories $\cat,\dcat\ldots$ and as morphism monoidal category 
equivalences $F:\cat\to\dcat$ and as composition concatenation.\\
  The \emph{braided equivalence groupoid} $\EqBr$ has as objects braided 
tensor categories $\zcat,\wcat \ldots$ and as morphism braided category 
equivalences $F:\zcat\to\wcat$. We denote by $\EqBrN$ the full subgroupoid 
consisting of objects that are Drinfeld centers (i.e. Witt class $0$).\\
The automorphism group of an object $\cat$ is the group of \emph{monoidal 
autoequivalences} $\EqMon(\cat)=\AutMon(\cat)$ resp. \emph{braided 
autoequivalences} $\EqBr(\zcat)=\AutBr(\zcat)$.
\end{definition}
In fact we are actually dealing with a bicategory with 1-morphisms invertible 
bimodule categories and with 2-morphisms bimodule category equivalences,  
respectively with 1-morphism category equivalences and with 2-morphisms natural 
transformations.

\begin{lemma}[Induction Functor]\label{thm_induction}
  There is an evident groupoid homomorphism $\Ind:\;\EqMon\to \BrPic$ given on 
objects 
by the identity and on morphisms ${_\cat}F_{\dcat}$ by $F\mapsto  {_F}\dcat$ 
where $\dcat$ is the 
trivial right $\dcat$-module category and the trivial left $\dcat$-module 
category 
precomposed with the monoidal functor $F$.\\
This yields in particular an evident group homomorphism $\AutMon(\cat)\to 
\BrPic(\cat)$. 
\end{lemma}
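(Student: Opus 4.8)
The plan is to verify the three defining properties of a homomorphism between these groupoids — well-definedness on morphisms up to equivalence, compatibility with composition, and preservation of identities — and then to deduce invertibility of the images (so that $\Ind$ genuinely lands in $\BrPic$) for free from functoriality, since morphisms of a groupoid are invertible.

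First I would confirm that for a monoidal equivalence $F\colon\cat\to\dcat$ the assignment ${_F}\dcat$ really is an exact $\cat$-$\dcat$-bimodule category. Its underlying linear category is $\dcat$, hence finite and exact; the right $\dcat$-action is the regular one and the left action is $C.M:=F(C)\otimes M$. The mixed constraint $(C.M).D\simeq C.(M.D)$ is inherited from the associator of $\dcat$, the left-module constraint $C.(C'.M)\simeq(C\otimes C').M$ from the tensor structure $F(C)\otimes F(C')\simeq F(C\otimes C')$ of $F$, and the coherence diagrams reduce to the pentagon of $\dcat$ together with the monoidal coherence of $F$. Moreover a monoidal natural isomorphism $F\cong F'$ manifestly induces a bimodule equivalence ${_F}\dcat\simeq{_{F'}}\dcat$, so ${_F}\dcat$ is well defined on the equivalence classes that form the morphisms of $\BrPic$.

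The substantive step is compatibility with composition. For $F\colon\cat\to\dcat$ and $G\colon\dcat\to\mathcal{E}$ I must construct a bimodule equivalence
$$
{_F}\dcat \boxtimes_\dcat {_G}\mathcal{E} \;\simeq\; {_{G\circ F}}\mathcal{E},
$$
for which the key input is the cancellation property of the relative Deligne product: for any left $\dcat$-module category $\ncat$ one has $\dcat\boxtimes_\dcat\ncat\simeq\ncat$, the $\dcat$-factor carrying the regular right action. Applying this with $\ncat={_G}\mathcal{E}$ identifies the underlying category of the left-hand side with $\mathcal{E}$ and leaves the right $\mathcal{E}$-action intact; under the balancing the left action of $C\in\cat$, given on the $\dcat$-factor by tensoring with $F(C)$, is transported to tensoring with $G(F(C))$ on $\mathcal{E}$, which is exactly the action defining ${_{G\circ F}}\mathcal{E}$. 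Preservation of identities is then immediate, since ${_{\id_\cat}}\cat$ is the regular bimodule category ${_\cat}\cat_\cat$, the unit of $\BrPic(\cat)$. Choosing a monoidal quasi-inverse $\bar F$ of $F$, the composition and identity properties give ${_F}\dcat\boxtimes_\dcat{_{\bar F}}\cat\simeq{_{\bar F\circ F}}\cat\simeq{_\cat}\cat_\cat$ and symmetrically, so each ${_F}\dcat$ is invertible with inverse $\Ind(\bar F)$; thus $\Ind$ is a groupoid homomorphism into $\BrPic$, and restricting to $\cat=\dcat$ yields the group homomorphism $\AutMon(\cat)\to\BrPic(\cat)$.

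I expect the main obstacle to be the bookkeeping in the composition step: making the cancellation $\dcat\boxtimes_\dcat(-)\simeq\id$ sufficiently canonical that it transports the pulled-back left $\cat$-action correctly and respects every bimodule associativity constraint, so that the comparison is a genuine equivalence of $\cat$-$\mathcal{E}$-bimodule categories rather than only of linear categories. The remaining verifications — exactness and the coherence of ${_F}\dcat$ — are routine consequences of the finiteness of the tensor categories and of $F$ being a monoidal equivalence, which is why the authors may reasonably call the statement evident.
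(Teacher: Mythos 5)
Your argument is correct. The paper gives no proof of this lemma at all --- it is asserted as ``evident'' with no proof environment following it --- and your writeup supplies precisely the standard verification the authors suppress: the coherence of ${_F}\dcat$ from the monoidal structure of $F$ together with the pentagon of $\dcat$, well-definedness on equivalence classes via monoidal natural isomorphisms $F\cong F'$, functoriality via the canonical cancellation equivalence $\dcat\boxtimes_\dcat\ncat\simeq\ncat$, $(D,N)\mapsto D.N$, under which the pulled-back left action $F(C)\otimes(-)$ on the $\dcat$-factor is transported to $G(F(C))\otimes(-)$, and invertibility of $\Ind(F)$ deduced formally from a monoidal quasi-inverse, which is what justifies landing in $\BrPic$ rather than merely in bimodule categories. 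The only ingredient you should cite rather than re-derive in the non-semisimple setting of this paper is the existence and canonicity of the relative Deligne product and of the cancellation equivalence for \emph{exact} module categories (this is where \cite{DN12} enters), together with the remark that ${_F}\dcat$ is exact because the equivalence $F$ preserves projectives --- and since you explicitly flag exactly this as the residual bookkeeping, nothing essential is missing.
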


\noindent
The following theorem is due to \cite{ENO09}; see \cite{DN12} for 
the non-semisimple case:
\begin{theorem}[ENOM functor]
 There is an equivalence of groupoids $\Phi:\BrPic\cong \EqBrN$.\\ 
 It is given on objects by sending $\cat\mapsto \zcat(\cat)$, on morphisms 
$_\cat\mcat_\dcat\mapsto F_\mcat$ it fulfills the 
following defining property:

$\zcat(\cat)$ acts on $_\cat\mcat_{\dcat}$ as 
bimodule 
category automorphism, where the compatibility constraint $(c.m).d\to c.(m.d)$ 
is given by the bimodule category structure and the compatibility constraint 
$c'.(c.m)\to c.(c'.m)$ is given by the half-braiding $\tau_{c,c'}$ of the 
element $(c,\tau)\in \zcat(\cat)$. Similarly $\zcat(\dcat)$ acts on 
$_\cat\mcat_{\dcat}$ as 
bimodule category automorphism. The defining property for 
$\Phi(\mcat):\zcat(\cat)\to \zcat(\dcat)$ 
is that the module category homomorphisms $c.$ and $.\Phi(\mcat)(c)$ are 
equivalent, i.e. there is a natural transformations between these two functors 
that satisfy certain coherence properties with the two module category and the 
bimodule category structure.
\end{theorem}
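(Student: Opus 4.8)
The plan is to realize the Drinfeld center intrinsically as bimodule endofunctors, so that the entire statement becomes a transport of structure along the invertible bimodule category. Recall the canonical braided monoidal equivalence $\zcat(\cat)\simeq \mathrm{Fun}_{\cat|\cat}(\cat,\cat)$ between the center and the category of $\cat$-$\cat$-bimodule endofunctors of the regular bimodule category $\cat$: an object $(c,\tau)$ is sent to the functor $a\mapsto c\otimes a$, where the right $\cat$-module structure is the associator and the left $\cat$-module structure $c\otimes(a\otimes x)\to a\otimes(c\otimes x)$ is supplied by the half-braiding $\tau_a$; conversely a bimodule endofunctor $G$ is reconstructed from $G(\mathbf 1)$ together with its bimodule constraint, which is precisely a half-braiding. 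Under this identification the tensor product of the center corresponds to composition of functors and the braiding to the canonical natural isomorphism interchanging two such functors. This gives $\Phi$ on objects, $\cat\mapsto\zcat(\cat)$, and reduces the morphism assignment to a statement about bimodule endofunctors.

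For a morphism $_\cat\mcat_\dcat$ I would build two braided monoidal functors into the common category $\mathrm{Fun}_{\cat|\dcat}(\mcat,\mcat)$ of $\cat$-$\dcat$-bimodule endofunctors of $\mcat$. On the left, send $(c,\tau)\in\zcat(\cat)$ to $L_{(c,\tau)}\colon m\mapsto c.m$; here the right $\dcat$-module structure of $L_{(c,\tau)}$ is the bimodule associativity $c.(m.d)\cong(c.m).d$ of $\mcat$, while the left $\cat$-module structure $c.(a.m)\to a.(c.m)$ is provided by the half-braiding $\tau$. Symmetrically, $\zcat(\dcat)$ acts on the right, $(d,\sigma)\mapsto R_{(d,\sigma)}\colon m\mapsto m.d$. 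Both assignments are braided monoidal by the same bookkeeping as in the object case. Because $\mcat$ is invertible these two functors are equivalences onto $\mathrm{Fun}_{\cat|\dcat}(\mcat,\mcat)$ --- this is exactly the Morita invariance of the center --- and I define $\Phi(\mcat)=R^{-1}\circ L\colon\zcat(\cat)\to\zcat(\dcat)$. The stated defining property, that $c.\,$ and $.\,\Phi(\mcat)(c)$ are equivalent as module functors, then holds by construction, and $\Phi(\mcat)$ is braided because $L$ and $R$ are.

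Next I would check that $\Phi$ is a homomorphism of groupoids: it is independent of the representative of the equivalence class of $\mcat$ (equivalent bimodule categories induce naturally isomorphic $L,R$, hence the same $\Phi$), it sends the regular bimodule $\cat$ to the identity, and it is compatible with composition. For the last point one identifies $\mathrm{Fun}_{\cat|\mathcal E}(\mcat\boxtimes_\dcat\ncat,\mcat\boxtimes_\dcat\ncat)$ with the relevant endofunctor categories and verifies that the central action of $\zcat(\cat)$ on $\mcat\boxtimes_\dcat\ncat$ factors as the action on $\mcat$ followed by the action on $\ncat$; this yields $\Phi(\mcat\boxtimes_\dcat\ncat)\cong\Phi(\ncat)\circ\Phi(\mcat)$.

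The remaining and genuinely hard part is to prove that $\Phi$ is an \emph{equivalence} of groupoids. Essential surjectivity on objects is built into the definition of $\EqBrN$, whose objects are by fiat the Drinfeld centers. What must be shown is that $\Phi$ induces an equivalence on each hom-groupoid: every braided equivalence $\zcat(\cat)\xrightarrow{\sim}\zcat(\dcat)$ arises from an invertible bimodule category, and non-equivalent bimodule categories yield non-equivalent braided functors. The construction of a quasi-inverse is the crux: given a braided equivalence $f$, one transports the canonical Lagrangian (\'etale) algebra $A_\cat\in\zcat(\cat)$ whose category of local modules recovers $\cat$ to a Lagrangian algebra $f(A_\cat)\in\zcat(\dcat)$, and realizes the sought bimodule category from the modules interpolating between $A_\cat$ and $f(A_\cat)$ inside the center. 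Establishing that this is well defined, invertible, and inverse to $\Phi$ is precisely the content of \cite{ENO09} in the semisimple case and of \cite{DN12} in the general finite non-semisimple case, where exactness of the module categories and the theory of central functors replace the semisimple Lagrangian-algebra calculus; I would invoke those results rather than reprove them.
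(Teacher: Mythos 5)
Your proposal is sound, but note that the paper does not prove this theorem at all: it is stated as a quotation from \cite{ENO09} (semisimple case) and \cite{DN12} (general finite case), which are precisely the references you yourself invoke for the genuinely hard steps, namely Morita invariance of the center and the construction of a quasi-inverse via transported Lagrangian algebras. Your preliminary construction of $\Phi(\mcat)$ as $R^{-1}\circ L$ inside $\mathrm{Fun}_{\cat|\dcat}(\mcat,\mcat)$ is the standard one, is consistent with the defining property stated in the theorem (that $c.$ and $.\Phi(\mcat)(c)$ agree as module functors), and so your argument amounts to the same approach as the paper: an explicit description of the functor with the substance deferred to \cite{ENO09} and \cite{DN12}.
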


\section{Subgroups of $\BrPic$}\label{sec_subgroups}

\subsection{Motivation}~\\

\noindent
Why should we hope for a Bruhat-like decomposition of $\BrPic(H\md\mod)$?\\

The main motivation for our initial work \cite{LP15} was the case $H=\CC[G]$ 
for $G$ abelian, as treated in the second authors joint paper \cite{FPSV14}. In 
particular let $G\cong \ZZ_p^n$ with $p$ a prime number. Then it is known that 
$\BrPic(\Rep(G))=\Sp_{2n}(\F_2)$ resp. $=\O_{2n}(\F_p)$ and the choice of 
generators in {\it cit. loc.} are upper triangular matrices containing the 
group of group automorphisms $\Aut(G)=\GL_n(\F_p)$, and additional generators 
are the so-called {\it EM-dualities}. 

As it turned out in our study, these generators are not arbitrary, but rather 
naturally defined subgroups, in much more general context, that can be written 
down without prior knowledge of the full Brauer Picard group and come from 
different sources:

Two sets of generators can be obtained via different induction functors from 
various categories $\cat'$ with $\zcat(\cat')\cong \zcat(\cat)$, leading in the 
example for $\cat=\Vect_G$  to upper-triangular matrices $\bcat=\Aut(G)\ltimes 
H^2(G,\CC^\times)$, as in \cite{NR14}, and for $\cat=\Rep(G)$ to  
lower-triangular matrices $\ecat$ intersecting precisely in $\vcat=\Aut(G)$. 

A third set of generators, the 
so-called EM-dualities $\pcat$, turned out to be rather general braided 
autoequivalences called {\it partial dualizations} in the first authors 
work \cite{BLS15}. These can be defined whenever a Hopf algebra decomposes into 
a semidirect product, and a special case are simple reflections of quantum 
groups. \\

In \cite{LP15} we have proved that every element fulfilling an additional 
condition (laziness) decomposes accordingly into an ordered product in these 
subgroups, also we have checked the Brauer-Picard group in known cases by 
hand. The Brauer-Picard group decomposition retains roughly the properties that 
a Lie group over a ring admits (not an honest Bruhat decomposition), 
which is what we get e.g. for $G=\ZZ_k^n$ for $k$ not prime.\\

A maybe more convincing reason for our approach arose during the work on 
\cite{LP15}: Every braided autoequivalence of $\D H\md \mod$ is described through 
its action on objects plus a monoidal structure i.e. an element in $H^2(\D 
H^*,\CC^\times)$. While the action on objects seems easily accessible (one can 
look at invertible objects, stabilizer etc.), there is in general very many 
possibilities. In the lazy case this action if given by precomposing a Hopf 
algebra automorphism, and the automorphism group reminds on a matrix group, but 
for more general cases we don't have this luxury. 

On the other hand $H^2(\D H^*,\CC^\times)$ is rather technical, but it should 
not surprise us that is is connected to the groups $H^2(H,\CC^\times),\;H^2( 
H^*,\CC^\times)$ and some interaction between $H,H^*$. So we propose to shift 
classification effort to the monoidal structure of the functor, rather that its 
action on objects. In fact for abelian groups (and much more general situations) 
we have by Schauenburg \cite{Schau02} a {\it K\"unneth-type} formula, and this 
decomposition {\bf does} precisely explain the initially observed 
decomposition.\\

Another interesting question is, if one can characterize elements inside 
one Bruhat-cell: Indeed for $H=\CC[G]$ the ``big 
cell'' $\bcat\ecat$ has the property that (in the language of \cite{NR14}) it 
sends the Langrangian subcategory $\mathcal{L}_{1,1}$ to some 
$\mathcal{L}_{N,\mu}$ with $\mu$ nondegenerate. Smaller Bruhat-cells 
$\bcat\ecat r$ can be characterized by the degree of degeneracy of $\mu$, down 
to $\mu=1$ which is a pure reflection. A similar picture seems to emerge in 
this article for the bimodule categories, where the big cell consists of 
$R\md\mod$ for some algebra of same dimension as $H$, while smaller cells are 
representations of considerable smaller algebras down to $\Vect$ for the longest 
element in $\pcat$.  \\

However, these are merely speculative observations. As stated in the 
introduction, the present paper does not concern itself with the decomposition, 
but focuses solely on the definition and description of these generic subgroups 
in the general case: ~\\ 

\subsection{$\vcat$ induced from Hopf automorphisms}~\\

\noindent
This obvious subgroup reappears as the intersection of the two upcoming 
subgroups.
\newcommand{\vZ}{{^{v^{-1}}\hspace{-.4cm}}{_{v\;\;}}Z}

\begin{lemma}
 Let $v\in\IsoHopf(H,L)$ be a Hopf algebra isomorphism, then we have in 
particular a monoidal equivalence 
$v:\;L\md\mod\to H\md\mod$ by 
precomposition. Induction (Lm. \ref{thm_induction}) provides 
an invertible bimodule category $\mcat:={_v}(H\md\mod)$. \\
We claim that this element in $\BrPic(L\md\mod,H\md\mod)$ gives under the ENOM 
functor rise to the functor in $\EqBr(\D L\md\mod,\D H\md\mod)$ given on 
objects by $\Phi(\Ind(v)):Z\mapsto \vZ$ and with trivial monoidal structure. 
Similarly induction of $v^{-1}:L^*\md\mod\to H^*\md\mod$ provides a module 
category $_{v^{-1}}(H^*\md\mod)$ giving rise to the same element.\\
In particular this defines a subgroup $\vcat\subset \BrPic(H\md\mod)$ with 
$\vcat\cong \OutHopf(H)$.
\end{lemma}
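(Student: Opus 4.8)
The plan is to realize $\vcat$ as the image of a single group homomorphism out of $\AutHopf(H)$ and then to pin down its kernel. First I would observe that precomposition with a Hopf automorphism defines a homomorphism $\AutHopf(H)\to\AutMon(H\md\mod)$, $v\mapsto v^*$, and that composing with the induction functor of Lemma~\ref{thm_induction} gives a group homomorphism $\rho\colon\AutHopf(H)\to\BrPic(H\md\mod)$, $v\mapsto[{_v}(H\md\mod)]$. By the construction in the body of this lemma the image of $\rho$ is exactly the collection of elements $\vcat$, so it remains only to compute $\ker\rho$ and to check that it coincides with the subgroup of inner Hopf automorphisms, which by definition yields $\OutHopf(H)$.

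The key tool for the kernel is the ENOM equivalence $\Phi$. Since it is an equivalence of groupoids it induces an isomorphism $\BrPic(H\md\mod)\cong\AutBr(\D H\md\mod)$, so $v\in\ker\rho$ if and only if $\Phi(\Ind(v))$ is the trivial braided autoequivalence of $\zcat(H\md\mod)=\D H\md\mod$. By the computation already carried out above, $\Phi(\Ind(v))$ is the functor $Z\mapsto\vZ$ with trivial monoidal structure, and this functor is precisely precomposition with the Hopf automorphism $\D v\in\AutHopf(\D H)$ restricting to $v$ on $H$ and to $(v^{-1})^*$ on $H^*$. Thus I have reduced the claim to the following auxiliary statement about an arbitrary Hopf algebra $A$, to be applied with $A=\D H$: precomposition with $\alpha\in\AutHopf(A)$ is isomorphic, as a braided (equivalently monoidal) autoequivalence, to the identity of $A\md\mod$ if and only if $\alpha$ is conjugation by a grouplike element of $A$.

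To prove the auxiliary statement I would run the standard Tannakian argument. A monoidal natural isomorphism $\eta\colon\id\Rightarrow\alpha^*$ is a natural family $\eta_M\in\End_{\Vect}(M)$ intertwining the two module structures; evaluating on the regular representation and using naturality against the module maps $A\to M$, $a\mapsto a\cdot m$, forces $\eta_M$ to be the action of a fixed invertible $g=\eta_A(1)\in A$. The intertwining condition $\eta_M(a\cdot m)=\alpha(a)\cdot\eta_M(m)$ then reads $ga=\alpha(a)g$, i.e.\ $\alpha=\mathrm{conj}_g$, while monoidality $\eta_{M\otimes N}=\eta_M\otimes\eta_N$ forces $\Delta(g)=g\otimes g$, so $g\in G(A)$; conversely the action of a grouplike $g$ visibly furnishes such an $\eta$, and one checks that it respects the half-braiding, so the isomorphism is automatically braided. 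Applying this with $A=\D H$ and unwinding $G(\D H)\cong G(H^*)\times G(H)$ at the level of coalgebras, conjugation by a grouplike of $\D H$ restricts on $H$ to a product of a conjugation by $g\in G(H)$ with a winding automorphism $h\mapsto\sum\chi(h_1)\,h_2\,\chi^{-1}(h_3)$ by a character $\chi\in G(H^*)$. These are exactly the inner Hopf automorphisms $\mathrm{Inn}_{\mathrm{Hopf}}(H)$, whence $\ker\rho=\mathrm{Inn}_{\mathrm{Hopf}}(H)$ and $\vcat\cong\AutHopf(H)/\mathrm{Inn}_{\mathrm{Hopf}}(H)=\OutHopf(H)$.

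The main obstacle is the converse direction of the kernel computation, namely converting a categorical triviality into an honest inner Hopf automorphism. The two delicate points are: (i) verifying that the element $g$ extracted from $\eta$ is genuinely grouplike and that passing from monoidal to braided isomorphisms imposes no constraint beyond what the $R$-matrix already guarantees; and (ii) matching conjugation by a grouplike of the double $\D H$ with the intended notion of inner automorphism of $H$, that is, confirming that $G(\D H)$ contributes precisely the grouplike conjugations together with the character windings and nothing further. The remaining verifications, that $\rho$ is a well-defined homomorphism and that the forward direction produces the asserted natural isomorphism, are routine consequences of the induction and ENOM formalism set up above.
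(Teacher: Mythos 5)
There is a genuine gap, and it sits exactly where you declare things routine. You write ``by the computation already carried out above, $\Phi(\Ind(v))$ is the functor $Z\mapsto {}^{v^{-1}}{}_{v}Z$ with trivial monoidal structure,'' but no such computation has been carried out: that identification \emph{is} the main claim of the lemma, and it is the entire content of the paper's proof. The ENOM functor is only characterized by its defining property (the module functors $Z.$ and $.\Phi(\mcat)(Z)$ must be naturally equivalent, compatibly with both module category structures and the bimodule constraint), so to evaluate it on $\Ind(v)$ one must exhibit a concrete witness. The paper does this by taking the candidate natural isomorphism $z\otimes m\mapsto v^{-1}(z^{(-1)}).m\otimes z^{(0)}$, built from the half-braiding of $Z$, and then verifying two coherence conditions: compatibility with the left $L\md\mod$-action, which is precisely the check that \emph{forces} the coaction twist by $v^{-1}$ (an untwisted or differently twisted coaction fails here), and compatibility with the right $H\md\mod$-action, which uses the action twist by $v$. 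Deferring this as a ``routine consequence of the induction and ENOM formalism'' is circular: without it you do not even know the object assignment $Z\mapsto {}^{v^{-1}}{}_{v}Z$ on which your entire kernel argument depends, nor that the monoidal structure is trivial.

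That said, your second and third paragraphs prove something the paper's proof environment does not: the paper asserts $\vcat\cong\OutHopf(H)$ without argument (the kernel question is only touched in the subsequent remark via the quantum Brauer group literature), whereas your Tannakian computation supplies it. The auxiliary statement --- a monoidal natural isomorphism $\id\Rightarrow\alpha^*$ forces $\eta_M$ to be the action of a grouplike $g$ with $\alpha=\mathrm{ad}_g$, and braidedness imposes nothing further since being braided is a property of a monoidal functor, not extra structure on the transformation --- is correct, and applying it to $A=\D H$ with $G(\D H)\cong G(H^*)\times G(H)$ correctly identifies the kernel as the subgroup generated by grouplike conjugations and two-sided character windings $h\mapsto \chi(h_1)h_2\chi^{-1}(h_3)$. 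This is consistent with the paper's examples: for $H=\CC[G]$ the windings are trivial and the kernel is $\Inn(G)$; for the Taft algebra the windings contribute $\chi(g)\in\langle q\rangle$, recovering $\CC^\times/\langle q\rangle$. Two small points you should still verify explicitly: that $\D v=((v^{-1})^*,v)$ fixes the canonical $R$-matrix of $\D H$, so that $(\D v)^*$ is indeed braided; and that an equality $v=\mathrm{ad}_g\circ w_\chi$ on $H$ automatically yields $\D v=\mathrm{ad}_{(\chi,g)}$ on $H^*$ as well. Both are straightforward, but they are part of matching ``trivial in $\AutBr(\D H\md\mod)$'' with your notion of inner automorphism. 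So the proposal inverts the lemma: it fills in the one claim the paper leaves implicit, while omitting the verification the paper actually performs; as a proof of the stated lemma it is incomplete until the half-braiding construction and the two coherence checks are supplied.
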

\begin{proof}
 To apply the defining property of the ENOM functor it suffices to construct a 
natural isomorphism between the functors $Z.$ and $.\Phi(\Ind(v))Z$ for $M\in 
L\md\mod$: The half-braiding given by the coaction on $Z$ gives a natural 
isomorphism  of $H$-modules:    
 $${_v}Z\otimes M\to M\otimes .\vZ$$
 $$z\otimes m\mapsto v^{-1}(z^{(-1)}).m\otimes z^{(0)}$$
 We moreover have to check compatibility with the module category constraints, 
namely for all $W\in L\md\mod$ the following  equality, which requires the 
coaction choice ${^{v^{-1}}}Z$:
 $$ \begin{array}{lllllll}
 {_v}Z\otimes ({_v}W\otimes M) &\to& {_v}W\otimes ({_v}Z\otimes M)& \to& 
{_v}W\otimes (M\otimes \vZ)
 &\stackrel{=}{\to}& ({_v}W\otimes M)\otimes\vZ\\
 z\otimes w\otimes m &\mapsto& z^{(-1)}.w\otimes z^{(0)}\otimes m &\mapsto& 
  z^{(-2)}.w\otimes v^{-1}(z^{(-1)}).m\otimes z^{(0)} &&
 \end{array}$$
 $$ \begin{array}{lll}
 {_v}Z\otimes ({_v}W\otimes M) 
 &\to& ({_v}W\otimes M)\otimes\vZ\\
 z\otimes w\otimes m &\mapsto& v^{-1}(z^{(-1)}).(w\otimes m) \otimes z^{(0)}=
 v(v^{-1}(z^{(-2)})).w\otimes v^{-1}(z^{(-1)}).m\otimes z^{(0)}
 \end{array}$$
 as well as the following equality of morphisms for all $W\in H\md\mod$:
 $$ \begin{array}{lllllll}
 {_v}Z\otimes (M\otimes W) &\stackrel{=}{\to}& ({_v}Z\otimes M)\otimes W & \to& 
 (M\otimes \vZ)\otimes W
 &\to& (M\otimes W)\otimes \vZ\\
 z\otimes m\otimes w &\mapsto& z\otimes m \otimes w  &\mapsto& 
  v^{-1}(z^{(-1)}).m\otimes z^{(0)}\otimes w &\mapsto& 
 v^{-1}(z^{(-2)}).m\otimes v^{-1}(z^{(-1)}).w \otimes z^{(0)}
 \end{array}$$
 $$ \begin{array}{lll}
 {_v}Z\otimes (M\otimes {_v}W) 
 &\to& (M\otimes {_v}W)\otimes\vZ\\
 z\otimes m\otimes w &\mapsto& v^{-1}(z^{(-1)}).(m\otimes w) \otimes z^{(0)}=
 v^{-1}(z^{(-2)}).m\otimes v^{-1}(z^{(-1)}).w \otimes z^{(0)}
 \end{array}$$
 \end{proof}
 
 \noindent
 We also discuss the connection to a different embedding\footnote{Thanks to the referee for asking this question}:
\newcommand{\M}{\mathcal{M}}
\renewcommand{\DH}{\mathrm{D}H}
\begin{remark}
    The authors of \cite{COZ97} define for a Hopf algebra $H$ the \emph{Quantum Brauer group} $\mathrm{BQ}(k,H)$, an analogue of the Brauer group. It consist of $H$-Azumaya $H$-Yetter-Drinfel'd algebras modulo $H$-Morita equivalence. In \cite{OZ98} they give a map $\pi:\Aut(H)\to \mathrm{BQ}(k,H)$ and determine the kernel. An elements in $A\in \mathrm{BQ}(k,H)$ gives rise to a $\DH\md\mod$-module category $A\md\mod$, i.e. an element in the Picard group. By \cite{DN12} in turn the Picard group maps to the Brauer-Picard group and hence to the group of braided autoequivalences - to be precise Thm. 4.3 states that the image of the Picard group consists precisely  of those braided autoequivalences which are trivializable on $H\md\mod\subset \DH\md\mod$. This is by construction exactly our subgroup $\bcat$ in the next section.\\
    
    We shall briefly sketch, how one can explicitly see the surjection of the subgroup $\Aut(H)$ to our subgroup $\vcat\subset\bcat$ through all these identifications: We first convince ourselves how the identity $v=\id\in \Aut(H)$ maps to the identity: The associated Azumaya algebra $A_{v^{-1}}$ is simply $\End{H}$ where $H$ is an $H$-Yetter-Drinfeld module with adjoint $H$-action and diagonal $H$-coaction. The module category $\mcat:=A_{v^{-1}}\md\mod$ has (as always) the single simple object $H$ with the above Yetter-Drinfeld structure. Now the implicit construction in \cite{DN12} Sec. 2.9 assigns to $\mcat$ the unique equivalence class of autoequivalences $\partial_\M\in\AutBr(\DH\md\mod)$, such that $\alpha^-\circ \partial_\M=\alpha^+$ are equal as module category morphisms, where $\alpha^\pm(X)$ means the module category morphisms given on objects by tensoring by $X\in \DH\md\mod$ and with module category morphism structure given by the braiding resp. the inverse braiding. Equal here means up to natural equivalence and indeed the double-braiding $X\otimes M\to M\otimes X\to X\otimes M$ turns out to be such a natural isomorphism between $X\otimes$ and itself that switches $\alpha^+,\alpha^-$. This shows how the Hopf-automorphism $\id$ indeed implies the braided autoequivalence $\partial=\id$ as expected.\\
    
    For arbitrary $v\in \Aut(H)$ the situation is more involved, but fairly similar: The Azumaya algebra is defined as $A_{v^{-1}}:=\End{H_{v^{-1}}}$ where $H_{v^{-1}}$ has again the diagonal coaction but a altered adjoint action $h.x=v^{-1}(h^{(2)})xS^{-1}(h^{(1)})$. This is not a Yetter-Drinfel'd module but fulfills the altered relation 
    $$(h.a)^{(0)}\otimes (h.a)^{(1)}= h^{(2)}.a^{(0)}\otimes v^{-1}(h^{(3)}) a^{(1)} S^{-1}(h^{(1)})$$
    Now if $\partial(X):={^{v^{-1}}}{_v}X$ is the Yetter-Drinfel'd module with modified action and coaction as in the theorem above, then one can roughly see that the double braiding maps
    $${^{v^{-1}}}{_v}X\otimes M\longrightarrow M \otimes {_v}X\longrightarrow M\otimes X$$
    so the double braiding in this sense gives an isomorphism $\alpha^-(\partial(X))\to \alpha^+(X)$ on objects, and as for identity the double braiding intertwines the braiding and negative braiding. \\
\end{remark}
 
\subsection{$\bcat$ induced from $H\md\mod$}~\\

Another rather obvious source of elements in $\BrPic$ is the induction functor 
from arbitrary monoidal equivalences; this of course contains the 
previous subgroup. While the bimodule category is given by 
definition, the image of the ENOM-functor requires some preparation:

Let $F:L\md\mod\to H\md\mod$ be a monoidal equivalence and let us 
consider the inverse $F^{-1}:H\md\mod\to L\md\mod$: We are assuming finite 
dimension, so $F^{-1}$ is given by $R\Box_{H^*}$ with $R={_f}H^*_\sigma$ an 
$L^*$-$H^*$-Bigalois object \cite{Sch91}, where $\sigma\in \Z^2(H^*,\CC)$ is a 
Hopf $2$-cocycle and $f:{_\sigma}(H^*)_{\sigma^{-1}}\to L^*$ is a Hopf algebra 
isomorphism from the Doi twist of $H^*$ to $L^*$. On objects $F^{-1}$ is just 
composing the coaction with $f$. E.g., for $H=\CC^G$ a dual groupring (but not 
always for a nonabelian groupring), due to the cocommutativity of $H^*=\CC[G]$ 
any Doi twist is equal to $H^*$ and $f$ is a choice of a group isomorphism 
$H^*\to L^*$. 

\begin{theorem}[\cite{MO98} Thm 2.7]
Given a $2$-cocycle $\sigma\in \Z^2(H^*,\CC)$, then we have the following 
category equivalence $\zcat(\mod(H^*))\to 
\zcat(\mod({_\sigma}(H^*)_{\sigma^{-1}}))$: Send $V$ to ${_\sigma}V$ with the 
same $H^*$-coaction and modified $H^*$-action 
$$f._\sigma 
v=\sigma(f^{(1)},v^{(-1)})\sigma^{-1}((f^{(2)}.v^{(0)})^{(-1)},f^{(3)})\cdot 
(f^{(2)}.v^{(0)})^{(0)}$$
and monoidal structure of the functor given by $\sigma$.
\end{theorem}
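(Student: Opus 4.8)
The plan is to exhibit the stated assignment as the braided equivalence on Drinfeld centers induced by a monoidal equivalence of comodule categories, and then to read off the explicit action formula. Write $A:=H^*$ and let $A^\sigma:={_\sigma}(H^*)_{\sigma^{-1}}$ denote the Doi twist; recall that $A$ and $A^\sigma$ share the same coalgebra while the multiplication of $A^\sigma$ is deformed by $\sigma$. First I would invoke the standard fact (Doi) that a Hopf $2$-cocycle $\sigma\in\Z^2(A,\CC)$ induces a $\CC$-linear monoidal equivalence $F_\sigma:\comod^{A}\to\comod^{A^\sigma}$ which is the identity on underlying objects and morphisms (an $A$-comodule is literally an $A^\sigma$-comodule, as the coalgebra is unchanged), and whose monoidal constraint $F_\sigma(V)\otimes F_\sigma(W)\to F_\sigma(V\otimes W)$ is given by $\sigma$ on the comodule degrees. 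The only genuinely algebraic input here is that the two-cocycle identity makes this constraint associative and invertible, with inverse built from $\sigma^{-1}$.

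Next I would pass to Drinfeld centers. Since $A$ is finite dimensional, $\zcat(\mod A)$ is braided equivalent to the category of Yetter--Drinfeld modules ${}^{A}_{A}\mathcal{YD}\cong\mod\D(A)$, and $\zcat(\comod^{A})$ is identified with the very same Yetter--Drinfeld category. Because a monoidal equivalence of tensor categories induces a braided equivalence of their centers, applying $\zcat(F_\sigma)$ yields a braided equivalence $\zcat(\mod A)\to\zcat(\mod A^\sigma)$. On an object this sends a Yetter--Drinfeld module $V$ to the same underlying space with the \emph{same} $A$-coaction (because $F_\sigma$ is the identity on comodules), while the half-braiding is conjugated by the monoidal constraint of $F_\sigma$; the monoidal structure of the resulting center functor is precisely that constraint, i.e. ``given by $\sigma$''.

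The heart of the argument is then to translate this conjugation of half-braidings into an action and to match the display. Under the center/YD dictionary the half-braiding of $V$ with an object $W$ is assembled from the coaction of $V$ and the $A$-action on $W$; conjugating by the $\sigma$-constraint dresses the single action leg $f.v$ by pairing the outer coproduct legs of $f$ against the coaction degree of $v$ before and after acting, producing exactly $f._\sigma v=\sigma(f^{(1)},v^{(-1)})\,\sigma^{-1}((f^{(2)}.v^{(0)})^{(-1)},f^{(3)})\,(f^{(2)}.v^{(0)})^{(0)}$.

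I expect the main obstacle to lie in this last translation, specifically in the bookkeeping: one must verify that the twisted action together with the unchanged coaction is a genuine Yetter--Drinfeld module over the \emph{deformed} Hopf algebra $A^\sigma$, whose own multiplication already carries $\sigma$-factors, and this forces several applications of the cocycle identity to make the three coproduct legs of $f$ contract correctly with the coaction of $v$. If one prefers to bypass the comodule--center dictionary, the same result follows from a direct verification: define the functor by the displayed formula and the identity on morphisms, check the $A^\sigma$-module axiom, the Yetter--Drinfeld compatibility, and monoidality by hand, and obtain a quasi-inverse by twisting with $\sigma^{-1}$; these computations are routine but lengthy, and in either route the cocycle condition is the only nontrivial ingredient.
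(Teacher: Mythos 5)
The paper itself contains no proof of this statement: it is quoted from Majid--Oeckl (\cite{MO98}, Thm.~2.7), so your proposal can only be measured against the original and against how the result is used here. Measured that way, your argument is correct and is the standard conceptual proof, genuinely different in style from the original: Majid--Oeckl verify by direct computation in the crossed-module (Yetter--Drinfel'd) language that the displayed action together with the unchanged coaction satisfies the required axioms over the Doi twist, whereas you (writing $A=H^*$, $A^\sigma={_\sigma}(H^*)_{\sigma^{-1}}$) obtain everything from the monoidal equivalence $F_\sigma\colon\comod^{A}\to\comod^{A^\sigma}$ --- identity on underlying comodules, constraint given by $\sigma$ on the degrees, with associativity being exactly the $2$-cocycle identity --- plus functoriality of the Drinfeld center and the finite-dimensional identifications $\zcat(\mod(A))\simeq{}^{A}_{A}\mathcal{YD}\simeq\zcat(\comod^{A})$ (the same kind of identification the paper uses elsewhere as $\zcat(H\md\mod)\cong\zcat(H^*\md\mod)$). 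One advantage of your route that you actually undersell: the step you flag as ``the main obstacle'' --- checking that the twisted action and old coaction form a genuine Yetter--Drinfel'd module over the deformed algebra $A^\sigma$ --- is automatic, since $\zcat(F_\sigma)$ lands in $\zcat(\comod^{A^\sigma})$, which \emph{is} the Yetter--Drinfel'd category over $A^\sigma$; the only computation left is extracting the explicit formula from the conjugated half-braiding, and the monoidal structure of the center functor is the inherited constraint, i.e.\ ``given by $\sigma$'' as claimed.

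One slip to fix in a written version: your center/YD dictionary sentence has the roles transposed. For a central object $V$ of $\comod^{A}$ it is the \emph{action} on $V$ that encodes the half-braiding, contracted against the coaction degree of the passive object $W$; what you wrote (coaction of $V$, action on $W$) is the dictionary for $\zcat(\mod(A))$, where no underlying twisting functor exists --- indeed $\mod(A)\not\simeq\mod(A^\sigma)$ as monoidal categories in general, which is precisely why one must pass to comodules, as the paper's own discussion of liftings emphasizes. Your subsequent description of the conjugation (outer coproduct legs of $f$ paired by $\sigma^{\pm1}$ against the degree of $v$ before and after acting) is the right one and does reproduce $f._\sigma v=\sigma(f^{(1)},v^{(-1)})\,\sigma^{-1}((f^{(2)}.v^{(0)})^{(-1)},f^{(3)})\,(f^{(2)}.v^{(0)})^{(0)}$, so this is expository rather than substantive; still, carry out the leg-by-leg computation once, since the placement of $\sigma$ versus $\sigma^{-1}$ and of $f^{(1)}$ versus $f^{(3)}$ is exactly what distinguishes the two-sided twist ${_\sigma}(H^*)_{\sigma^{-1}}$ from a one-sided one.
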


\noindent
We can now state:

\newcommand{\FZa}{{^{\sigma^{-1}\circ 
f\hspace{-.4cm}}}{_{f^{-1}}\hspace{0cm}}Z}

\begin{lemma}
  Let $F\in\EqMon(L\md\mod,H\md\mod)$ and $\sigma,f$ as above. The induction 
image of $F$ is by definition the bimodule category $\mcat:={_F}(H\md\mod)$. 

We 
claim that this element in $\BrPic(L\md\mod,H\md\mod)$ gives under the ENOM 
functor rise to the functor in $\EqBr(\D L\md\mod,\D H\md\mod)$ given on objects 
by $\Phi(\Ind(v)):Z\mapsto \FZa$ and with the monoidal structure of $F$. 

Here $\FZa$ means the $L$-module has been converted by $F$ to a $H$-module 
$F(Z)$ which means precompose the action by $f^{-1}$. On the other hand the  
$L^*$-action is pulled back to an ${_\sigma}(H^*)_{\sigma^{-1}}$-action 
by $f$ and further to a $H^*$-action by $\sigma^{-1}$ with the previous Lemma.

  In particular this defines a subgroup $\bcat\subset \BrPic(H\md\mod)$ which 
is the homomorphic image of the group $\AutMon(H\md\mod)$.
\end{lemma}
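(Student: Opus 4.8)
The plan is to pin down $\Phi(\Ind(F))$ by the defining property of the ENOM functor stated above: since $\Phi$ determines a braided autoequivalence up to equivalence, it suffices to exhibit, for every center object $Z\in\zcat(L\md\mod)=\D L\md\mod$, a natural isomorphism between the left-action functor $Z.(-)$ and the right-action functor $(-).\FZa$ on the bimodule category $\mcat={_F}(H\md\mod)$, compatible with both module-category constraints, and to read off the monoidal structure of the resulting functor. This is exactly the scheme of the previous Lemma; the only genuinely new feature is that the monoidal constraint $F_2$ of $F$ is no longer trivial but is governed by the cocycle $\sigma$ and the Hopf isomorphism $f$ of Schauenburg's description $F^{-1}=R\Box_{H^*}$, $R={_f}(H^*)_\sigma$.

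First I would spell out the two functors. Because the right $H\md\mod$-action on $\mcat$ is the plain tensor product while the left $L\md\mod$-action is precomposed with $F$, we have $Z.M=F(Z)\otimes M$ and $M.\FZa=M\otimes\FZa$ in $H\md\mod$. The candidate isomorphism $\beta_{Z,M}\colon F(Z)\otimes M\to M\otimes\FZa$ is the half-braiding $\tau_{Z,-}$ of $Z$ transported through $F$: writing $M=F(F^{-1}M)$ and inserting the monoidal constraint $F_2$, the composite $F(Z)\otimes M\cong F(Z\otimes F^{-1}M)\xrightarrow{F(\tau)}F(F^{-1}M\otimes Z)\cong M\otimes F(Z)$ lands in $M$ tensored with the object whose underlying $H$-module is $F(Z)$ but whose coaction (half-braiding on the $H$-side) has been twisted through $F_2$. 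This generalizes the formula $z\otimes m\mapsto v^{-1}(z^{(-1)}).m\otimes z^{(0)}$ of the previous Lemma, now with $f^{-1}$ on the action and the extra $\sigma$-factors on the coaction.

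Next I would check that $\beta_{Z,M}$ is $H$-linear and natural in $M$; linearity is immediate since $\tau_{Z,-}$ is an $L$-module morphism and $F$ is monoidal. The core of the argument is then the pair of coherence diagrams precisely as in the previous proof — compatibility with the left $L\md\mod$-action constraint and with the right $H\md\mod$-action constraint — but now carried out in the presence of the nontrivial associativity datum $\sigma$. Here the cited theorem of \cite{MO98} supplies the decisive bookkeeping: it identifies the $\sigma$-twisted coaction on $F(Z)$ produced by $\beta$ with its $\sigma$-modified action $f._\sigma(-)$, i.e. with the object $\FZa$ (the $L$-action converted by $f^{-1}$, the coaction pulled back by $f$ and then by $\sigma^{-1}$), and it is the $2$-cocycle identity on $\sigma$ that forces the two diagrams to commute and simultaneously exhibits the monoidal structure of $\Phi(\Ind(F))$ as $\sigma$, i.e. as the monoidal structure of $F$. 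I expect this $\sigma$-dependent coherence check to be the main obstacle, since it is exactly the content beyond the $\vcat$-case; the $f$-part reduces verbatim to the previous Lemma.

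Finally, for the subgroup statement I would specialize to $L=H$, so that $F\in\AutMon(H\md\mod)$. By the Induction Lemma the map $\Ind\colon\AutMon(H\md\mod)\to\BrPic(H\md\mod)$ is a group homomorphism, hence its image $\bcat:=\Ind(\AutMon(H\md\mod))$ is a subgroup; composing with the isomorphism $\Phi$ realizes $\bcat$ as the homomorphic image of $\AutMon(H\md\mod)$, described on the $\AutBr$-side by $Z\mapsto\FZa$ with monoidal structure $\sigma$. That $\bcat\supseteq\vcat$ is then immediate, since every Hopf isomorphism yields a monoidal autoequivalence and recovers the previous Lemma as the special case $\sigma=1$, $f=v$.
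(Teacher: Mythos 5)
Your proposal is correct and follows essentially the same route as the paper: the paper also takes the half-braiding transported through $F$ (it justifies $H$-linearity by writing the map $z\otimes m\mapsto z_{(-1)}.m\otimes z_{(0)}$ as a braiding of $\FZa\otimes{_F}M'$ with $M'={_{F^{-1}}}M$), identifies the twisted coaction via the cited theorem of \cite{MO98}, and then verifies exactly your two coherence diagrams. The $\sigma$-bookkeeping you defer to \cite{MO98} is condensed in the paper into the single identity $z_{(-1)}._F\,w=z^{(-1)}.w$, which makes the element-level coherence checks reduce verbatim to the $\vcat$-case, just as you predicted.
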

It is easy to see that the case $\sigma=1$ reduces to the elements (and the 
proof) in $\vcat$. 
\begin{proof}
  We denote the modified coaction by lower indices $z\mapsto z_{(-1)}\otimes 
z_{(0)}$. The relevant property of its definition is that $Z\mapsto \FZa$ is a 
braided category equivalence which coincides with $F$ on the level of modules. 
More formally $z_{(-1)}._F\;w=z^{(-1)}.w$. Using this property the proof works 
automatically as in the previous section: \\
  
  The  half-braiding (with modified coaction and action, but unmodified action 
on $M$!) 
  $${_F}Z\otimes M \rightarrow M\otimes \FZa$$
  $$z\otimes m\mapsto z_{(-1)}.m\otimes z_{(0)}$$ 
  gives clearly a natural isomorphism of $H$-modules, since we can write it as 
a braiding of $\FZa\otimes {_F}M'$ with $M'={_{F^{-1}}}M$.\\
 Then we check the coherence conditions using the relevant property:
  $$ \begin{array}{lllll}
 {_F}Z\otimes ({_F}W\otimes M) &\to& {_F}W\otimes ({_F}Z\otimes M)& \to& 
{_F}W\otimes (M\otimes \FZa)\\
 z\otimes w\otimes m &\mapsto& z^{(-1)}.w\otimes z^{(0)}\otimes m &\mapsto& 
  z^{(-1)}.w\otimes (z^{(0)})_{(-1)}.m\otimes (z^{(0)})_{(0)} 
 \end{array}$$
 $$ \begin{array}{lll}
 {_F}Z\otimes ({_F}W\otimes M) 
 &\to& ({_F}W\otimes M)\otimes\FZa\\
 z\otimes w\otimes m &\mapsto& z_{(-1)}.({_F}w\otimes m) \otimes z^{(0)}=
 (z_{(-1)})^{(1)}._F\;w\otimes (z_{(-1)})^{(2)}.m\otimes z^{(0)}
 \end{array}$$
 as well as the more trivial relation
 $$ \begin{array}{lllllll}
 {_F}Z\otimes (M\otimes W) &\stackrel{=}{\to}& ({_F}Z\otimes M)\otimes W & \to& 
 (M\otimes \FZa)\otimes W
 &\to& (M\otimes W)\otimes \FZa\\
 z\otimes m\otimes w &\mapsto& z\otimes m \otimes w  &\mapsto& 
  z_{(-1)}.m\otimes z_{(0)}\otimes w &\mapsto& 
 z_{(-2)}.m\otimes z_{(-1)}.w \otimes z_{(0)}
 \end{array}$$
 $$ \begin{array}{lll}
 {_F}Z\otimes (M\otimes {_F}W) 
 &\to& (M\otimes {_F}W)\otimes\vZ\\
 z\otimes m\otimes w &\mapsto& z_{(-1)}.(m\otimes w) \otimes z_{(0)}=
 (z_{(-1)})^{(1)}.m\otimes (z_{(-1)})^{(2)}.w \otimes z^{(0)}
 \end{array}$$
\end{proof}

\subsection{$\ecat$ induced from $H^*\md\mod$}\label{sec_subgroupE}~\\

Since $\zcat(H\md\mod)\cong \zcat(H^*\md\mod)$ we may as well induce up from 
$\AutMon(H^*\md\mod)$, which is in general not related to $\AutMon(H\md\mod)$ - 
except the common subgroup $\AutHopf(H)\cong \AutHopf(H^*)$. Here by definition 
$F\in \AutMon(H^*\md\mod)$ induces the $H^*\md\mod$-bimodule category 
${_F}(H^*\md\mod)$ and the image of $F$ under the ENOM functor in 
$\zcat(H\md\mod)\cong \zcat(H^*\md\mod)$ is dual to the last section. However, 
it is not clear what the $H\md\mod$-bimodule category associated to $F$ is; 
this is clarified by:

\newcommand{\FZb}{{_{\sigma^{-1}\circ f\hspace{-.25cm}}}{^{f^{-1}}}Z}
\begin{lemma}
  Let $F\in\EqMon(L^*\md\mod,H^*\md\mod)$ and consider again $F^{-1}$, which we 
write as cotensoring with a $L$-$H$-Bigalois object $R={_f}H_\sigma$ with 
$\sigma \in Z^2(H,\CC)$ and $f:{_\sigma}H_{\sigma^{-1}}\to L$. 
  We already know that (dually) the induction image of $F$ is by definition the 
$L^*$-$H^*$-bimodule category ${_F}(L^*\md\mod)$ and this gives under the ENOM 
functor rise to the functor in $\EqBr(\D L\md\mod,\D H\md\mod)$ given on 
objects by $\Phi(\Ind(F)):Z\mapsto \FZb$ and with the monoidal structure of 
$F$.\\
  We claim that this braided equivalence coincides with the image of the ENOM 
functor of the following invertible exact $L$-$H$-bimodule category: Let 
$\mcat=R\md\mod$ as $\CC$-linear category. The left and right coaction 
  $$ R\longrightarrow L\otimes  R
  \qquad  R\longrightarrow R\otimes H$$ 
  give by pull-back module category actions of $L\md\mod$ and $H\md\mod$ on 
$R\md\mod$.
  
  In particular this defines a subgroup $\ecat\subset \BrPic(H\md\mod)$ which 
is the homomorphic image of the group $\AutMon(H\md\mod)$.\footnote{This 
subgroup of $\BrPic$ has been considered first in a different approach of 
\cite{FMM14}; here we describe it as induction functor and give its image under 
the ENOM functor.}
\end{lemma}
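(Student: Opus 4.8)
The plan is to split the statement into its two genuinely separate assertions and treat them in turn: first that $\mcat=R\md\mod$, equipped with the pull-back actions, is a well-defined invertible exact $L$-$H$-bimodule category, and second that its image under $\Phi$ is the braided equivalence $Z\mapsto\FZb$ already computed on the induction side. For the second point, since $\Phi$ is an equivalence of groupoids it is enough to verify its defining property directly for $\mcat$: I would exhibit, for every $Z\in\zcat(L\md\mod)$, a natural isomorphism between the left-multiplication functor $Z.(-)$ and the right-multiplication functor $(-).\FZb$ on $\mcat$, compatible with the right $H\md\mod$-action, with the left $L\md\mod$-action (through the half-braiding of $Z$), and with the bimodule associativity. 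This is the exact dual of the computation carried out for $\bcat$, the new feature being that $M$ is now a genuine $R$-module rather than an $H$-module.

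For the bimodule structure, recall that a Bigalois object $R={_f}H_\sigma$ is in particular an $L$-$H$-bicomodule algebra: the left $L$-coaction $\rho_L\colon R\to L\otimes R$ and the right $H$-coaction $\rho_R\colon R\to R\otimes H$ are commuting algebra maps. Pulling an $L$-module $V$ back along $\rho_L$ and an $H$-module $W$ back along $\rho_R$ turns $V\otimes M$ and $M\otimes W$ into $R$-modules, and the associativity and pentagon constraints together with the bimodule middle-associativity $(V.M).W\cong V.(M.W)$ follow from coassociativity of the two coactions and the bicomodule compatibility of $\rho_L$ and $\rho_R$. The Galois hypothesis enters only for invertibility and exactness: because $R$ is a bi-Galois object it is invertible in Schauenburg's Bigalois groupoid, with inverse an $H$-$L$-Bigalois object $R^{-1}$ satisfying $R\,\Box_H\,R^{-1}\cong L$ and $R^{-1}\,\Box_L\,R\cong H$ as bicomodule algebras; passing to module categories yields $R\md\mod\boxtimes_{H\md\mod}R^{-1}\md\mod\simeq L\md\mod$, so $\mcat$ is invertible. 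Exactness of $R\md\mod$ over $H\md\mod$ holds because a finite-dimensional right $H$-Galois object is $H$-simple, and $H$-simple comodule algebras give exact module categories (\cite{DN12}).

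It then remains to produce the natural isomorphism. I would build $\beta_{Z,M}\colon Z\otimes M\to M\otimes\FZb$ out of the half-braiding of the Yetter-Drinfeld module $Z$ routed through the left $L$-coaction of $R$: the $L$-coaction of $Z$, transported along $\rho_L$ and the Galois map of $R$, becomes precisely the $H$-coaction defining $\FZb$ (pulled back by $f^{-1}$, with action twisted by $\sigma^{-1}\circ f$ as produced by the quoted theorem of \cite{MO98}). Conceptually this is Schauenburg's braided equivalence of Drinfeld centers induced by the Bigalois object $R$, now realized on $R\md\mod$. One then checks, along the two coherence diagrams of the $\bcat$-proof (one with $W\in L\md\mod$ using the half-braiding, one with $W\in H\md\mod$), that $\beta$ is $R$-linear, natural in $M$, and compatible with the module- and bimodule-constraints; the cocycle $\sigma$ enters only through the twisted $H$-action on $\FZb$ and the identities it satisfies. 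By the defining property of $\Phi$ this identifies $\Phi(\mcat)(Z)=\FZb$ with the monoidal structure of $F$. Finally, since $\Ind$ and $\Phi$ are groupoid homomorphisms, $F\mapsto[\mcat]$ is a group homomorphism $\AutMon(H^*\md\mod)\to\BrPic(H\md\mod)$ and $\ecat$ is by definition its image.

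The main obstacle is the construction of $\beta$. In the $\vcat$ and $\bcat$ cases $M$ carried an honest (pre-composed) $H$-action on which the coaction of $Z$ could act directly, so the natural isomorphism was the bare formula $z\otimes m\mapsto z_{(-1)}.m\otimes z_{(0)}$; here $M$ is only an $R$-module, so the Yetter-Drinfeld data of $Z$ must be threaded through the bicomodule-algebra structure of the Galois object $R$, and the delicate step is verifying $R$-linearity of the resulting map and its coherence against the twisted action of $\FZb$. Once $\beta$ is in hand, the remaining coherence verifications are a routine dualization of the $\bcat$ computation.
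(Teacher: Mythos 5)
Your proposal is correct and takes essentially the same route as the paper: the paper verifies the defining ENOM property by writing your $\beta$ concretely as $z\otimes m\mapsto \iota(z_{(-1)}).m\otimes z_{(0)}$, with $\iota:H\cong H_\sigma$ the right-$H$-colinear cleaving identification and $z_{(-1)}\otimes z_{(0)}$ the $F$-modified coaction, and then carries out exactly the two steps you flag as the delicate ones --- the $R$-linearity check, done via the cocycle multiplication rule $\iota(a)\iota(b)=\sigma(a^{(1)},b^{(1)})\iota(a^{(2)}b^{(2)})$ together with the modified Yetter--Drinfeld relation, and the two coherence diagrams dual to the $\bcat$ computation. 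Your supplementary arguments for invertibility (via Schauenburg's Bigalois groupoid and the cotensor-to-Deligne-product translation) and for exactness (via $H$-simplicity of the Galois object) address claims the paper's lemma states but whose proof it omits, and they are consistent with its framework.
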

\begin{proof}
  Let $M$ be an $R$-module. To prove our formula for $\Phi(\mcat)$ we need to 
guess a natural transformation:
  $$Z\otimes M\rightarrow M\otimes \FZb$$
  $$z\otimes m\mapsto \iota(z_{(-1)}).m\otimes z_{(0)}$$
  where we denote the $F$-modified coaction by lower indices $z_{(-1)}\otimes 
z_{(0)}\in H \otimes {^F}Z$ and the right-$H$-colinear cleaving identification 
map $\iota:H\cong H_\sigma$. To prove that this is indeed a natural 
transformation we need to check that it is an $R$-module map (it is clearly 
natural and bijective), so we act with some $\iota(H)\in R$ and wish to prove:
  \begin{align*}
   &\iota((\iota(h)^{(-1)}.z)_{(-1)}).\iota(h)^{(0)}.m\otimes 
(\iota(h)^{(-1)}.z)_{(0)}\\
   &\stackrel{?}{=} \iota(h)^{(0)}.\iota(z_{(-1)}).m\otimes 
\iota(h)^{(1)}._{\sigma^{-1}\circ f}z_{(0)}
  \end{align*}
  On the right hand side we use the right $H$-colinearity of $\iota$, on the 
  left hand side the left $L$-coaction on $R$ via $f$. Then we use that by 
  definition $\iota(a)\iota(b)=\sigma(a^{(1)},b^{(1)})\iota(a^{(2)}b^{(2)})$:
   \begin{align*}
  &\sigma((f(h^{(1)}).z)_{(-2)}, h^{(2)})\;
  \iota((f(h^{(1)}).z)_{(-1)}\cdot h^{(3)}).m\otimes (f(h^{(1)}).z)_{(0)}\\
   &\stackrel{?}{=} 
   \sigma(h^{(1)},z_{(-2)})\;
   \iota(h^{(2)}\cdot z_{(-1)}).m\otimes 
   h^{(3)}._{\sigma^{-1}\circ f}z_{(0)}
  \end{align*}
  To prove this relation is true the main issue is to 
  simplify the expression $(f(h^{(1)}).z)_{(-1)}$ using the 
  Yetter-Drinfeld-condition relation action and coaction, but since we have 
  lower-index (i.e. $F$-modified coaction) we need to also use the modified 
  action, which we obtain by adding and subtracting an appropriate cocycle. The 
  overall calculation is:
  \begin{align*}
   &\sigma((f(h^{(1)}).z)_{(-2)}, h^{(2)})\;
  \iota((f(h^{(1)}).z)_{(-1)}\cdot h^{(3)}).m\otimes (f(h^{(1)}).z)_{(0)}\\
  &=\underline{\sigma(h^{(1)},z_{(-2)})\sigma^{-1}(h^{(2)},z_{(-1)})}\;
  \sigma((f(h^{(1)}).z)_{(-2)}, h^{(2)})\;
  \iota((f(h^{(1)}).z)_{(-1)}\cdot h^{(3)}).m\otimes (f(h^{(1)}).z)_{(0)}\\
  &=\sigma(h^{(1)},z_{(-1)})\;
  \iota((h^{(2)}._{\sigma^{-1}\circ f} z_{(0)})_{(-1)}\cdot h^{(3)}).m
  \otimes (h^{(2)}._{\sigma^{-1}\circ f} z_{(0)})_{(0)} \\
  &=\sigma(h^{(1)},z_{(-2)})\;
  \iota(h^{(2)}z_{(-1)}\underline{S(h^{(4)})\cdot h^{(5)}}).m
  \otimes h^{(3)}._{\sigma^{-1}\circ f} z_{(0)} 
  \end{align*}
  
  Having established a natural transformation we check once again the coherence 
  conditions. We have equalities as follows for all $W\in L\md\mod$:
   $$ \begin{array}{lllllll}
  Z\otimes (W\otimes M) &\to& W\otimes (Z\otimes M)& \to& 
 W\otimes (M\otimes \FZb)
 &\stackrel{=}{\to}& (W\otimes M)\otimes\FZb\\
 z\otimes w\otimes m &\mapsto& z^{(-1)}.w\otimes z^{(0)}\otimes m &\mapsto& 
  z^{(-2)}.w\otimes \iota(z_{(-1)}).m\otimes z^{(0)} &&
 \end{array}$$
 $$ \begin{array}{lll}
 Z\otimes (W\otimes M) 
 &\to& (W\otimes M)\otimes\FZb\\
 z\otimes w\otimes m &\mapsto& \iota(z_{(-1)}).(w\otimes m) \otimes z^{(0)}=
 f(z_{(-2)}).w\otimes \iota(z_{(-1)}).m \otimes z^{(0)}
 \end{array}$$
 as well as for all $W\in H\md\mod$:
 $$ \begin{array}{lllllll}
 Z\otimes (M\otimes W) &\stackrel{=}{\to}& (Z\otimes M)\otimes W & \to&
(M\otimes \FZb)\otimes W
 &\to& (M\otimes W)\otimes \FZb\\
 z\otimes m\otimes w &\mapsto& z\otimes m \otimes w  &\mapsto& 
  \iota(z_{(-1)}).m\otimes z_{(0)}\otimes w &\mapsto& 
 \iota(z_{(-2)}).m\otimes z_{(-1)}.w \otimes z^{(0)}
 \end{array}$$
 $$ \begin{array}{lll}
 Z\otimes (M\otimes W) 
 &\to& (M\otimes W)\otimes\FZb\\
 z\otimes m\otimes w &\mapsto& \iota(z_{(-1)}).(m\otimes w) \otimes z_{(0)}=
 \iota(z_{(-1)})^{(0)}.m\otimes \iota(z_{(-1)})^{(1)}.w \otimes z^{(0)}
 \end{array}$$

\end{proof}

\subsection{$\pcat$ the partial dualizations}\label{sec_reflection}~\\

We now introduce an additional subset of elements in $\BrPic$ which are {\bf 
not} induced from monoidal equivalences, but constructed from the braided 
equivalence side of the ENOM functor. We will make thorough use of the second  
category equivalence $\Omega_X:\D X\md\mod\stackrel{\sim}{\to} \D X^*\md\mod$ 
\cite{BLS15} Thm. 3.20 for any Hopf algebra $X$ inside a \emph{braided base 
category} $\mathcal{X}$. The new $X^*$-action and -coaction on $\Omega(M)$ is 
as follows:

\begin{align*}
        \newcommand{\rhoomegaprime}{\raisebox{-.5\totalheight}{
        \begin{tikzpicture}
                \begin{scope}[scale=0.5]
                        \dAction{0}{2}{1}{-1}{\grau}{black}
                        \vLine{2}{1}{2}{3}{\grau}
                        \dPairing{1}{3}{1}{1}{\grau}{\tiny $\! \mathrm{eval}\!$}
                        \vLine{0}{2}{1}{3}{\grau}
                        \vLineO{1}{2}{0}{3}{black}
                        \vLine{2}{0}{1}{1}{black}
                        \vLineO{1}{0}{2}{1}{\grau}
                        \vLine{0}{3}{0}{4}{black}
                        \vLine{1}{-1}{1}{0}{\grau}
                        \vLine{2}{-1}{2}{0}{black}
                \end{scope}
                \draw (0.5 , -0.7) node {\tiny $X^*$};
                \draw (1 , -0.7) node {\tiny $M$};
                \draw (0 , 2.2) node {\tiny $M$};
        \end{tikzpicture}
        }
        }
        \newcommand{\deltaomegaprime}{\raisebox{-.5\totalheight}{
        \begin{tikzpicture}
                \begin{scope}[scale=0.5]
                        \dAction{1}{1}{1}{1}{\grau}{black}
                        \dCopairing{0}{1}{1}{1}{\grau}{\tiny $\!\! 
			\mathrm{coeval} \!\! \vspace*{-1mm}$}
                        \vLine{0}{1}{0}{5}{\grau}
                        \vLine{2}{0}{2}{1}{black}
                        \vLine{2}{2}{1}{5}{black}
                \end{scope}
                \draw (1 , -0.2) node {\tiny $M$};
                \draw (0 , 2.7) node {\tiny $X^*$};
                \draw (0.5 , 2.7) node {\tiny $M$};
        \end{tikzpicture}
        }
        }
        \newcommand{\omegatwoprime}{\raisebox{-.5\totalheight}{
        \begin{tikzpicture}
                \begin{scope}[scale=0.5]
                        \dAction{0}{1}{1}{-1}{\grau}{black}
                        \dAction{1}{3}{1}{1}{\grau}{black}
                        \dSkewantipode{1}{2}{\grau}
                        \vLine{2}{0}{2}{3}{black}
                        \vLine{0}{1}{1}{2}{\grau}
                        \vLineO{1}{1}{0}{2}{black}
                        \vLine{0}{2}{0}{4}{black}
                \end{scope}
                \draw (0.5 , -0.2) node {\tiny $M$};
                \draw (1 , -0.2) node {\tiny $N$};
                \draw (0 , 2.2) node {\tiny $M$};
                \draw (1 , 2.2) node {\tiny $N$};
        \end{tikzpicture}
        }
        }
        \Omega(M,\rho_M, \delta_M) = \left( M, \rhoomegaprime, \deltaomegaprime
\right) , \quad \Omega_2(M,N) = \omegatwoprime .
\end{align*}
with nontrivial monoidal structure $\Omega_2$ involving the inverse antipode. \\

\begin{lemma}
  The following $X\md\mod$-$X^*\md\mod$ bimodule category fulfills 
  the defining property of the preimage under the ENOM-functor of 
$\Omega$; it is {\bf not} necessarily invertible:\\
  As abelian category $\mcat=\mathcal{X}$ with trivial module 
category structure on either side (forgetting the $X,X^*$-module structures) 
but with nontrivial bimodule category structure
  $(V\otimes M) \otimes W\longrightarrow V\otimes (M\otimes W)$ given by 
  \begin{center}
            \begin{grform}
                        \begin{scope}[scale = .5]
			\draw[ultra thick, color = black, rounded corners] 
			 (0,0) -- (0,4);
                        \draw[ultra thick, color = black, rounded corners] 
			 (2,0) -- (2,4);
			 \draw[ultra thick, color = black, rounded corners] 
			 (4,0) -- (4,4);
			                                 
\dAction{-1}{2}{5}{-2}{\grau}{black}
\dAction{-1}{2}{1}{2}{\grau}{black}

                        \draw (0 , -0.6) node {$(V$};
                        \draw (2 , -0.6) node {$M)$};
                        \draw (4 , -0.6) node {$W$};
                        \draw (0 , 4.6) node {$V$};
                        \draw (2 , 4.6) node {$(M$};
                        \draw (4 , 4.6) node {$W)$};
                        \end{scope}
                \end{grform}
\end{center}
  where $V\in X\md\mod,\;W\in X^*\md\mod,\;M\in \mathcal{X}$.
\end{lemma}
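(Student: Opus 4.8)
The plan is to verify the defining property of the ENOM functor directly, in exactly the spirit of the proofs for $\vcat$, $\bcat$ and $\ecat$ above. For every object $Z\in\zcat(\cat)=\D X\md\mod$ I must produce a natural isomorphism $\Theta_Z$ between the two endofunctors $Z.$ and $.\Omega(Z)$ of $\mcat$, natural in $M\in\mcat$, and then check that $\Theta_Z$ is compatible with the left $X\md\mod$-action, the right $X^*\md\mod$-action, and the bimodule associativity constraint drawn in the statement. Since these coherences characterise $\Phi(\mcat)$ up to isomorphism, establishing them for the object $\Omega(Z)$ with its modified action and coaction shows $\Phi(\mcat)\cong\Omega$.

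First I would construct $\Theta_Z$. Because $\Omega$ leaves the underlying object of $\mathcal{X}$ unchanged, and both module structures on $\mcat=\mathcal{X}$ are the plain tensor product with the $X$- resp.\ $X^*$-structure forgotten, the functors $Z.$ and $.\Omega(Z)$ are simply $Z\otimes(-)$ and $(-)\otimes Z$ in $\mathcal{X}$. I would therefore take $\Theta_Z:=c_{Z,M}\colon Z\otimes M\to M\otimes Z$ to be the braiding of the base category $\mathcal{X}$; this is exactly the half-braiding $\tau_{Z,M}$ of the central object $Z$ restricted to objects $M$ carrying the trivial $X$-module structure, so it is manifestly natural in $M$ and invertible. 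All the content then lies in the coherences.

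Next I would check the two module coherences together with the bimodule constraint. The left coherence, for $V\in X\md\mod$, compares $Z.(V.M)$ with $V.(Z.M)$ using the half-braiding $\tau_{Z,V}$ on one side and $\Theta_Z$ on the other; since $V$ now carries a genuine $X$-action, $\tau_{Z,V}$ combines the $X$-coaction $\delta_Z$ of $Z$ with the braiding of $\mathcal{X}$, and this records how the center structure of $Z$ is transported. The substantive computation is the right coherence for $W\in X^*\md\mod$ taken together with the bimodule associativity constraint: the constraint creates an $X$-$X^*$ pair by coevaluation, lets the $X$-leg act on $V$ and the $X^*$-leg act on $W$ while $M$ passes through. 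Composing this with $\Theta_Z$ and simplifying by the Yetter--Drinfeld axiom of $Z$ and the rigidity (zig-zag) identities, one should read off precisely the modified $X^*$-action $\rho'$ and $X^*$-coaction $\delta'$ that define $\Omega(Z)$, while comparison of two nested bimodule constraints reproduces the monoidal structure $\Omega_2$.

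The main obstacle is this last step. Matching the coevaluation-based bimodule constraint against the evaluation-based formula for $\rho'$ forces one to slide the $X$-coaction of $Z$ past the braidings and to cancel the created $X$-$X^*$ pair through a zig-zag identity; it is exactly in this bookkeeping that the inverse antipode $S^{-1}$ occurring in $\Omega_2$ is produced, since straightening the dual strand that has been braided the ``wrong way'' around $M$ converts $S$ into $S^{-1}$. I expect the entire verification to be a purely graphical calculation in $\mathcal{X}$ using only naturality of the braiding, the Yetter--Drinfeld condition, and the (co)evaluation coherences, with the correct placement of $S^{-1}$ being the one genuinely delicate point.
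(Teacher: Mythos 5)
Your proposal is correct and takes essentially the same route as the paper: the paper likewise chooses the natural isomorphism $Z\otimes M\to M\otimes \Omega(Z)$ to be the braiding of the base category $\mathcal{X}$ (legitimate since $Z=\Omega(Z)$ as objects of $\mathcal{X}$ and $M$ carries the trivial module structures), and then verifies exactly the two graphical coherence conditions you describe, with the evaluation/coevaluation in the bimodule constraint matching the modified action and coaction defining $\Omega(Z)$.
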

\begin{proof}
 As natural equivalence $Z\otimes M\longrightarrow M\otimes \Omega(Z)$ we 
choose the braiding in the category $\mathcal{X}$, where $Z\in\D X\md\mod$ 
inside $\mathcal{X}$ and as objects in $\mathcal{X}$ we have  
$Z=\Omega(Z)$:
\begin{center}
            \begin{grform}
                        \begin{scope}[scale = 1]
                                \vLine{2}{0}{0}{2}{black}
                                \vLineO{0}{0}{2}{2}{black}
                                \end{scope}
                        \draw (0 , -0.3) node {$Z$};
                        \draw (0 , 2.3) node {$M$};
                        \draw (2 , -0.3) node {$M$};
                        \draw (2 , 2.3) node {$\Omega(Z)$};
                \end{grform}
\end{center}

 We check the coherence conditions that we have equalities of the following 
morphisms
\begin{center}
  \begin{minipage}{0.6\textwidth}
            \begin{grform}
                        \begin{scope}[scale = .5]
                        \vLine{2}{0}{0}{3}{black}
       			\vLineO{0}{0}{2}{3}{black}
                        \vLine{4}{2}{2}{6}{black}	
       			\vLineO{2}{3}{4}{6}{black}
                        \draw[ultra thick, color = black, rounded corners] 
			 (0,3) -- (0,6);
			 \draw[ultra thick, color = black, rounded corners] 
			 (4,0) -- (4,2);                         
			                                 
\dAction{-1}{0.5}{1}{-0.5}{\grau}{black}
\dAction{-1}{2.5}{1}{0.5}{\grau}{black}
 \draw[ultra thick, color = \grau, rounded corners] 
			 (-1,.5) -- (-1,2.5); 

                        \draw (0 , -0.6) node {$Z$};
                        \draw (2 , -0.6) node {$(W$};
                        \draw (4 , -0.6) node {$M)$};
                        \draw (0 , 6.6) node {$W$};
                        \draw (2 , 6.6) node {$(M$};
                        \draw (4 , 6.6) node {$\Omega(Z))$};
                        \end{scope}
                        \end{grform}
                        =$\qquad$
                        \begin{grform}
                         
                        \begin{scope}[scale = .5]
                        \vLine{2}{0}{0}{3}{black}
      			\vLineO{0}{0}{2}{3}{black}
                        \vLine{4}{2}{2}{6}{black}			        
			\vLineO{2}{3}{4}{6}{black}
                        \draw[ultra thick, color = black, rounded corners] 
			 (0,3) -- (0,8);
			\draw[ultra thick, color = black, rounded corners] 
			 (2,6) -- (2,8);
			 \draw[ultra thick, color = black, rounded corners] 
			 (4,6) -- (4,8);
			 \draw[ultra thick, color = black, rounded corners] 
			 (4,0) -- (4,2);  
			 
			\dAction{-1}{7}{5}{-1}{\grau}{black}
			\dAction{-1}{7}{1}{1}{\grau}{black}
			 
                        \draw (0 , -0.6) node {$Z$};
                        \draw (2 , -0.6) node {$(W$};
                        \draw (4 , -0.6) node {$M)$};
                        \draw (0 , 8.6) node {$W$};
                        \draw (2 , 8.6) node {$(M$};
                        \draw (4 , 8.6) node {$\Omega(Z))$};
                        \end{scope}
                \end{grform}
  \end{minipage}
  \end{center}
 as well as of the following morphisms involving the modified action on 
$\Omega(Z)$:
\begin{center}
  \begin{minipage}{0.6\textwidth}
            \begin{grform}
                        \begin{scope}[scale = .5]
                        \vLine{2}{0}{0}{3}{black}
       			\vLineO{0}{0}{2}{3}{black}
                        \vLine{4}{2}{2}{6}{black}	
       			\vLineO{2}{3}{4}{6}{black}
                        \draw[ultra thick, color = black, rounded corners] 
			 (0,3) -- (0,8);
			 \draw[ultra thick, color = black,rounded corners] 
			 (2,6) -- (2,8);
			 \draw[ultra thick, color = black, rounded corners] 
			 (4,6) -- (4,8);
			 \draw[ultra thick, color = black, rounded corners] 
			 (4,0) -- (4,2);                         
			                                 
\dAction{1}{3}{1}{0.5}{\grau}{black}
\dAction{1}{6}{1}{-.5}{\grau}{black}
 \draw[ultra thick, color = \grau, rounded corners] 
			 (.5,3) -- (.5,6); 
			 
			\draw (.8 , 6.3) node {\tiny eval};
                        \draw (1 , 2.7) node {\tiny coeval};

                        \draw (0 , -0.6) node {$(Z$};
                        \draw (2 , -0.6) node {$M)$};
                        \draw (4 , -0.6) node {$W$};
                        \draw (0 , 8.6) node {$(M$};
                        \draw (2 , 8.6) node {$W)$};
                        \draw (4 , 8.6) node {$\Omega(Z)$};
                        \end{scope}
                        \end{grform}
                        =$\qquad$
                        \begin{grform}
                         
                        \begin{scope}[scale = .5]
                        \vLine{2}{2}{0}{5}{black}
      			\vLineO{0}{2}{2}{5}{black}
                        \vLine{4}{4}{2}{8}{black}			        
			\vLineO{2}{5}{4}{8}{black}
                        \draw[ultra thick, color = black, rounded corners] 
			 (0,0) -- (0,1);
			 \draw[ultra thick, color =black, rounded corners] 
			 (0,5) -- (0,8);
			\draw[ultra thick, color = black, rounded corners] 
			 (2,0) -- (2,2);
			 \draw[ultra thick, color = black, rounded corners] 
			 (4,0) -- (4,2);
			 \draw[ultra thick, color = black, rounded corners] 
			 (4,0) -- (4,4);  
			 
			\dAction{-1}{1}{5}{-1}{\grau}{black}
			\dAction{-1}{1}{1}{1}{\grau}{black} 
			 
                        \draw (0 , -0.6) node {$(Z$};
                        \draw (2 , -0.6) node {$M)$};
                        \draw (4 , -0.6) node {$W$};
                        \draw (0 , 8.6) node {$(M$};
                        \draw (2 , 8.6) node {$W)$};
                        \draw (4 , 8.6) node {$\Omega(Z)$};
                        \end{scope}
                \end{grform}
  \end{minipage}
\end{center}
  \end{proof}

  Suppose now we have a projection $\pi:H\to A$ which means we can write $H=K 
\rtimes A$ where the coinvariants $K=H^{\coin{\pi}}$ is a Hopf algebra in the 
braided category $\D A\md\mod$. Then we can construct two Hopf algebras:
  $$r'(H):=K^*\rtimes A\qquad r(H):= \Omega_A(K)\rtimes A^*$$
  and category equivalences $\D H\md\mod \to \D\; r(H)\md\mod$ and $\D H 
\md\mod\to \D \; r'(H)\md\mod$.\\

Our previous lemma applied to $\mathcal{X}=\D A\md\mod$ gives a 
$\zcat(H\md\mod)$-$\zcat(r(H)\md\mod)$-bimodule category $\mcat'=\mathcal{X}$, 
which is in general not invertible. But there is an invertible sub-bimodule 
category stable under the structure maps, namely $A\md\mod$ ($M$ appears only 
as undercrossing). This shows for the first part:
\begin{corollary}
  The element $\D H\md\mod \to \D\; r'(H)\md\mod$  
  is the image under the ENOM functor of the module category 
  $\mcat=A\md\mod$
  with module structure given by the tensor product $\otimes_\CC$ in 
  $A\md\mod$, forgetting $K$- resp. $K^*$-module structure, and a nontrivial 
  bimodule category structure given by the previous lemma using the pairing 
  between $K,K^*$. 
\end{corollary}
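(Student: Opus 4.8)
The plan is to read off the asserted bimodule category as an invertible sub-bimodule of the (generally non-invertible) bimodule category $\mcat'=\mathcal{X}$ produced by the previous lemma, and to verify that passing to this sub-bimodule alters neither the defining property of the ENOM functor nor --- and this is the crucial point --- preserves invertibility. First I would specialise the previous lemma to the braided base category $\mathcal{X}=\D A\md\mod$ and the Hopf algebra $X=K$ inside it. Under the bosonization identification $\D H\md\mod\cong \D_{\mathcal{X}}K\md\mod$ and $\D r'(H)\md\mod\cong\D_{\mathcal{X}}K^*\md\mod$, which rewrites Yetter--Drinfeld modules over $K\rtimes A$ as Yetter--Drinfeld modules over $K$ internal to $\D A\md\mod$, the functor $\Omega=\Omega_K$ is exactly the partial dualization $\D H\md\mod\to\D r'(H)\md\mod$. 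The lemma then supplies $\mcat'=\mathcal{X}=\D A\md\mod$ as a bimodule category fulfilling the defining property of the ENOM-preimage of this equivalence, together with the explicit associativity constraint drawn there.

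Next I would single out $A\md\mod$ as the full abelian subcategory of $\mcat'=\D A\md\mod$ consisting of the objects with trivial coaction, i.e. the image of the embedding $A\md\mod\hookrightarrow\D A\md\mod$. The key structural observation --- and the reason the restriction is consistent --- is that in the associativity constraint $(V\otimes M)\otimes W\to V\otimes(M\otimes W)$ the middle object $M$ occurs only as an undercrossing: no $K$- or $K^*$-action is ever applied to $M$, and the only nontrivial ingredient is the pairing coupling the $K$-action on $V$ to the $K^*$-action on $W$. Consequently the left $H\md\mod$- and right $r'(H)\md\mod$-actions, which are $\otimes_\CC$ after forgetting the $K$- resp. $K^*$-module structure, preserve trivial coactions, and the associativity constraint sends $A\md\mod$ to itself. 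Thus $A\md\mod$ is a sub-bimodule category closed under all structure maps, carrying precisely the data claimed in the statement.

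I would then check that this sub-bimodule still realises the reflection under the ENOM functor, for which almost nothing new is required: the natural isomorphism $Z\otimes M\to M\otimes\Omega(Z)$ is the braiding of $\mathcal{X}$ restricted to $M\in A\md\mod$, and the two coherence identities were established in the previous lemma for all $M\in\mathcal{X}$, hence hold a fortiori on the subcategory. Therefore $A\md\mod$ satisfies the defining property of the ENOM-preimage of $\Omega_K$, so that its ENOM image is the equivalence $\D H\md\mod\to\D r'(H)\md\mod$, as asserted.

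The one genuinely new point --- and the step I expect to be the main obstacle --- is invertibility: $\mcat'=\mathcal{X}$ is too large to be invertible, whereas $A\md\mod$ must be shown to lie in $\BrPic$. The cleanest route is to invoke the ENOM theorem: since $\Omega_K$ is a braided equivalence, its honest ENOM-preimage is an invertible bimodule category, and any bimodule category satisfying the defining property is equivalent to that preimage; having verified the defining property for $A\md\mod$, invertibility follows. Alternatively one argues directly, computing the relative Deligne product of $A\md\mod$ with the analogous sub-bimodule for the reverse reflection $r'(r'(H))=H$ and identifying it with the trivial bimodule $H\md\mod$, the pairing-associativity on one factor cancelling the copairing-associativity on the other and leaving only the regular $A\md\mod$-bimodule. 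I would present the ENOM argument as the main line and keep the Deligne-product computation as a consistency check.
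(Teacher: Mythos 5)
Your main line reproduces the paper's own argument: specialize the lemma to the braided base category $\mathcal{X}=\D A\md\mod$ with $X=K$, identify $\Omega_K$ with the partial dualization $\D H\md\mod\to\D\, r'(H)\md\mod$ via bosonization, and observe that $M$ appears only as an undercrossing, so that only the $A$-action on $M$ --- never its coaction --- enters the associativity constraint and the natural transformation $Z\otimes M\to M\otimes \Omega(Z)$; hence the structure restricts from $\mcat'=\mathcal{X}$ to $A\md\mod$. One caveat on your realization of $A\md\mod$ as ``the full subcategory of $\D A\md\mod$ of objects with trivial coaction'': this embedding only exists for cocommutative $A$ (for general $A$ a trivial coaction violates the Yetter--Drinfeld condition), and even then the lemma's actions tensor with objects carrying nontrivial coactions, so they do not literally preserve a trivial-coaction subcategory; moreover a trivial-coaction object cannot carry a generic $K$-action in $\mathcal{X}$, since $A$-colinearity of $K\otimes M\to M$ would force the action to factor through coinvariants. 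The correct mechanism is the one you state in the same breath: since no structure map uses any coaction on $M$ (the only coactions occurring are those of $K,K^*$ themselves inside the pairing diagram, and of $Z$ in the half-braiding), the same formulas define the bimodule structure directly on $A\md\mod$ with acting categories $H\md\mod$ and $r'(H)\md\mod$ after forgetting the $K$- resp.\ $K^*$-structure; the paper's phrase ``sub-bimodule category'' is to be read in this sense.

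The genuine gap is your invertibility argument. You claim that since $\Omega_K$ is a braided equivalence its ENOM-preimage is invertible, and that ``any bimodule category satisfying the defining property is equivalent to that preimage,'' so that invertibility of $A\md\mod$ follows. This implication is false, and the paper's own lemma is the counterexample: $\mcat'=\mathcal{X}$ itself fulfills the defining property of the preimage of $\Omega$ and is explicitly noted to be \emph{not} necessarily invertible. The ENOM theorem attaches a unique braided equivalence to each invertible bimodule category; it does not assert that a bimodule category admitting coherent natural isomorphisms $Z.M\cong M.F(Z)$ for a given braided equivalence $F$ must be invertible (for instance, a direct sum of two copies of the honest preimage also satisfies the property). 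Consequently invertibility must be established independently --- by your deferred relative Deligne product computation identifying $\mcat\boxtimes_{r'(H)\md\mod}\mcat^{\mathrm{op}}$ with the trivial bimodule category, or by identifying $\mcat$ with modules over an algebra such as $(K\otimes K^*)_\lambda\rtimes A$ as in the paper's subsequent remark. What you present as a ``consistency check'' is in fact the required proof; the ENOM route cannot replace it. (The paper, for its part, asserts invertibility of the subcategory without detailed argument, but it does not rest on the circular inference you propose.)
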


\noindent
Similarly one constructs vice-versa:

\begin{corollary}
  The element $\D H\md\mod \to \D\; r(H)\md\mod$  
  is the image under the ENOM functor of the module category 
  $\mcat=(K\md\mod)_{\Omega_A^{-1}}$
  with module structure given by the tensor product $\otimes_\CC$ in 
  $K\md\mod$, for the right side after precomposing with $\Omega_A^{-1}$, 
  forgetting $A$- resp. $A^*$-module structure, and a nontrivial 
  bimodule category structure given by the previous lemma using the pairing 
  between $A,A^*$. 
\end{corollary}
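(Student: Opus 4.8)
The plan is to mirror the proof of the preceding corollary in the ``vice-versa'' configuration, interchanging the roles of $A$ and $K$. Recall that $r(H)=\Omega_A(K)\rtimes A^*$ is the partial dualization that dualizes the base $A$ to $A^*$, and that the associated braided equivalence $\D H\md\mod\to\D r(H)\md\mod$ is obtained by applying $\Omega_A$ to the braided base category $\D A\md\mod$ in which the Hopf algebra $K$ lives, thereby transporting $K$ to $\Omega_A(K)$. Whereas the first corollary applied the previous ($\Omega$-)Lemma to dualize $K$ inside $\mathcal{X}=\D A\md\mod$, leaving $A$ untouched and extracting the invertible sub-bimodule category $A\md\mod$, here we dualize $A$ and expect the surviving invertible sub-bimodule category to be $K\md\mod$, decorated by the $\Omega_A^{-1}$-twist that reconciles $\Omega_A(K)$-modules with $K$-modules.

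First I would invoke the previous lemma to produce the (generally non-invertible) bimodule category equal to the base, and then extract the invertible sub-bimodule category $\mcat=K\md\mod$: exactly as before, the middle object $M$ enters the bimodule structure map only as an undercrossing, so restricting to objects on which the relevant coaction is trivial keeps the structure maps well defined and confines them to $K\md\mod$. The left $\D H\md\mod$-action forgets $A$ and keeps the $K$-action, while the right $\D r(H)\md\mod$-action forgets $A^*$ and keeps the $\Omega_A(K)$-action; precomposing the latter with $\Omega_A^{-1}$ identifies the underlying $\CC$-linear category as $K\md\mod$, which is precisely the content of the subscript in $(K\md\mod)_{\Omega_A^{-1}}$.

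It then remains to verify the defining property of the ENOM functor, and here I would follow the proof of the $\Omega$-Lemma essentially verbatim. I would take the natural isomorphism $Z\otimes M\to M\otimes\partial(Z)$ to be the braiding of the braided base category, where $Z\in\D H\md\mod$ and $\partial(Z)$ is its image under the dualization of $A$ regarded as an object of $\mcat$, and then check the two coherence conditions against the left and right module structures. The only substantive change from the first corollary is that the pairing entering the bimodule structure is now the evaluation--coevaluation pairing between $A$ and $A^*$, the dualized pieces, rather than between $K$ and $K^*$; all the coherence diagrams transcribe with $A,A^*$ in place of $K,K^*$. Finally one confirms that $K\md\mod$ is a genuinely invertible sub-bimodule category.

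The hard part will be the $\Omega_A$-bookkeeping. In the first corollary the dualized factor $K$ was decoupled from the untouched base $A$, so the surviving bimodule category $A\md\mod$ carried no extra twist; by contrast, dualizing the base $A$ nontrivially transports $K$ to $\Omega_A(K)$, and one must check that precomposing the right action with $\Omega_A^{-1}$ is compatible with the bimodule structure maps while preserving both coherence identities and invertibility. Concretely, the delicate point is to confirm that the modified action and coaction defining $\Omega_A$ interact with the $A$-$A^*$ pairing so that the two coherence diagrams still commute after the $\Omega_A^{-1}$-twist; this is the analogue, one dualization ``higher'', of the cocycle bookkeeping already met in the lemma for $\ecat$.
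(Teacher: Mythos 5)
Your proposal takes exactly the route the paper itself takes: the paper's entire ``proof'' of this corollary is the remark ``Similarly one constructs vice-versa,'' i.e.\ mirror the first corollary's argument with the roles of $K$ and $A$ interchanged --- apply the $\Omega$-lemma so that the bimodule constraint comes from the $A$--$A^*$ pairing, use the braiding as the natural transformation $Z\otimes M\to M\otimes \partial(Z)$, extract the invertible sub-bimodule category $K\md\mod$ via the observation that $M$ appears only as an undercrossing, and reconcile $\Omega_A(K)$-modules with $K$-modules by precomposing the right action with $\Omega_A^{-1}$. Your write-up is correct and in fact supplies more detail than the paper does, including rightly flagging the $\Omega_A$-bookkeeping as the only substantive point beyond the first corollary.
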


\begin{example}
 The extremal case is a full dualization $r'$ with $A=1$ or equivalently $r$ 
with $K=1$. In this case we obtain the (in this case invertible) 
$H\md\mod$-$H*\md\mod$-bimodule category $\mcat=\Vect$ from the Lemma with 
bimodule category structure given by the pairing of $H$ and $H^*$.   
\end{example}

\noindent
  Very similar formulae construct dually $H^*\md\mod$-$r(H)^*\md\mod$-bimodule 
categories. \\
  
  Of particular interest are cases where $r'(H)\cong H$ resp. $r(H)\cong H$ 
which is the case for self-dual Yetter-Drinfeld Hopf algebra $K$ resp. 
self-dual Hopf algebra $A$ and $\Omega$-self-dual Yetter-Drinfeld module $K$. 
For these cases partial dualizations give rise to elements in 
$\BrPic(H\md\mod)$.   

\begin{remark}
 The bimodule categories should be equivalent to something like 
$\mcat:=(K\otimes K^*)_\lambda \rtimes A\md\mod$ resp.  $\mcat:= K\rtimes 
(A\otimes A^*)_\lambda\md\mod$ for the Bigalois object $(K\otimes K^*)_\lambda$ 
given by the evaluation pairing $K\otimes K^*\to \CC$ - and with trivial 
bimodule category structure.
\end{remark}
\begin{remark}
  Partial dualizations can be used to conjugate different forgetful functors 
$\zcat(\cat)\to \cat$ and hence many different induction functors from $\cat$. 
Our approach can be seen as the hope that this exhausts a large amount of 
different forgetful functors. 
\end{remark}
\begin{remark}
An important fact is that partial dualizations in our (narrow) definition 
depend on the precise Hopf algebra i.e. is not invariant under monoidal 
representation category equivalence. This can lead to the effect that 
$H\md\mod\cong H'\md\mod$ where $H$ has a semi direct decomposition while $H'$ 
has not, but still both centers carry the respective partial actualization. 
This can be either avoided by reformulating the above construction 
categorically (both categories have a semi direct-product-like decomposition) 
or by accepting, that partial dualizations can arise from any monoidally 
equivalent presentation. Compare the group example \ref{exm_exoticReflection} 
below.
\end{remark}

\section{Examples}\label{sec_examples}

\subsection{Groups}

We discuss all module categories and braided equivalences for the case 
$H=\CC^G$ with $G$ a finite group i.e. $H\md\mod=\Vect_G$. The module 
categories can be in this case be check against the explicit description:

\begin{lemma}[\cite{Dav10} Cor. 3.6.3 \cite{NR14} Prop. 
5.2]\label{lm_invertibleBimoduleCategories}
Invertible bimodule categories over $\Vect_G$ are in bijection with pairs 
$(B,\eta)$ where $U\subset G\times G^{op}$ a subgroup and $\eta\in 
H^2(B,\CC^\times)$ such that
\begin{itemize}
 \item $U(G\times 1)=U(1\times G^{op})=G\times G^{op}$
 \item $U_1=U\cap (G\times 1)$ and $U_2=U\cap (1\times G^{op})$ are abelian.
 \item $\eta(h_1,h_2)\eta^{-1}(h_2,h_1)$ is a nondegenerate pairing $U_1\times 
U_2\to \CC^\times$
\end{itemize}
In this case $\mcat=\Vect_{(G\times G^{op})/U}$ is the $\CC$-linear category of 
vector spaces graded by $U$-cosets \cite{O03}. The Lemma holds similarly for 
invertible $\Vect_G'$-$\Vect_G$-bimodule categories.
\end{lemma}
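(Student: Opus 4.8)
The statement is a classification, so the plan is to combine Ostrik's description of module categories over a pointed fusion category with the Etingof--Nikshych--Ostrik criterion for a bimodule category to be invertible, and then read off exactly which data $(U,\eta)$ survive.

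First I would identify the objects to be classified. A $\Vect_G$-$\Vect_G$-bimodule category is the same datum as a left module category over $\Vect_G \boxtimes (\Vect_G)^{\mathrm{rev}}$. Reversing the tensor product of $\Vect_G$ replaces $G$ by $G^{op}$, and since Deligne products of pointed categories are pointed, $\Vect_G \boxtimes (\Vect_G)^{\mathrm{rev}} \cong \Vect_{G\times G^{op}}$ with trivial associator. By Ostrik's theorem, indecomposable module categories over $\Vect_\Gamma$ with $\Gamma = G\times G^{op}$ are classified, up to $\Gamma$-conjugacy, by pairs $(U,\eta)$ with $U \le \Gamma$ and $\eta \in H^2(U,\CC^\times)$; concretely $\mcat(U,\eta)$ is the category of modules over the $\eta$-twisted group algebra $\CC_\eta[U]$ viewed as an algebra object in $\Vect_\Gamma$. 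Reading off graded components shows that, as an abelian category, $\mcat(U,\eta)$ is a sum of copies of $\Rep_\eta(U)$ indexed by the cosets $\Gamma/U$; this collapses to $\Vect_{\Gamma/U}$ precisely once $\CC_\eta[U]$ is simple, which I expect to extract from the invertibility conditions.

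Next I would impose invertibility. By Etingof--Nikshych--Ostrik, $\mcat$ is invertible as a $\Vect_G$-$\Vect_G$-bimodule iff the two canonical tensor functors $\Vect_G \to \mathrm{Fun}_{\Vect_G}(\mcat,\mcat)$ and $(\Vect_G)^{\mathrm{rev}} \to \mathrm{Fun}_{\Vect_G}(\mcat,\mcat)$ are equivalences, equivalently iff both one-sided dual categories $(\Vect_G)^*_\mcat$ are again $\Vect_G$ and the Frobenius--Perron dimensions match. The plan is to restrict the $\Vect_\Gamma$-module $\mcat(U,\eta)$ along $\Vect_{G\times 1}\hookrightarrow\Vect_\Gamma$ and along $\Vect_{1\times G^{op}}\hookrightarrow\Vect_\Gamma$, and to compute these one-sided duals as group-theoretical categories built from $U_1$, $U_2$ and $\eta$. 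I expect the three itemized conditions to be exactly what force these duals to be pointed and isomorphic to $\Vect_G$: the surjectivity $U(G\times 1)=U(1\times G^{op})=\Gamma$ is equivalent to both projections $U\to G$ and $U\to G^{op}$ being onto, which makes the two $\Vect_G$-actions ``full'' and fixes the rank; abelianness of $U_1=\ker(U\to G^{op})$ and $U_2=\ker(U\to G)$ makes the duals pointed; and nondegeneracy of the commutator pairing $(h_1,h_2)\mapsto\eta(h_1,h_2)\eta^{-1}(h_2,h_1)$ on $U_1\times U_2$ is the nondegeneracy needed for $\CC_\eta[U]$ to be Morita-trivial and for the dual to be $\Vect_G$ rather than a larger group-theoretical category. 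Along the way this forces $|U_1|=|U_2|=|U|/|G|$ and $\CC_\eta[U]$ simple, which justifies the abelian description $\mcat\cong\Vect_{\Gamma/U}$ promised above.

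The hard part will be the bookkeeping in the middle step: identifying the one-sided dual of the restricted module category with an explicit group-theoretical category and pinning down precisely when it collapses back to $\Vect_G$. This is where the commutator pairing and its nondegeneracy must be produced, and where one must be careful that $U_1,U_2$ need not be central in $U$ -- only that their mutual commutator in the twisted algebra is governed by $\eta$. A useful cross-check is the abelian case $G=\F_p^n$, where $U$ runs over the Lagrangian subgroups of the orthogonal space $G\times G^{op}$ and one recovers $\O_{2n}(\F_p)$, matching the example in the introduction. Finally, the $\Vect_G'$-$\Vect_G$ version should follow verbatim by allowing possibly nontrivial associators $\omega',\omega$ on the two sides: one works with $\Vect_{G\times G^{op}}^{\,\omega'\times\omega^{-1}}$ and replaces $\eta\in H^2(U,\CC^\times)$ by a $2$-cochain with $d\eta=(\omega'\times\omega^{-1})|_U$, leaving the three conditions unchanged.
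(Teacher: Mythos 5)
First, a point of order: the paper does not prove this lemma at all --- it is quoted verbatim from the cited sources (\cite{Dav10} Cor.~3.6.3, \cite{NR14} Prop.~5.2), so your proposal can only be measured against the standard proofs there. Your overall route is in fact the same as theirs: identify $\Vect_G$-bimodule categories with module categories over $\Vect_{G\times G^{op}}$, invoke Ostrik's classification by pairs $(U,\eta)$ up to conjugacy, and then characterize invertibility via the one-sided dual categories. That skeleton is correct, and your cross-checks (the $\F_p^n$ case, $|U_1|=|U_2|=|U|/|G|$, the identification of $U_1,U_2$ as the kernels of the two projections) are all sound. One small misreading: in the last clause of the lemma, $\Vect_{G'}$ means a second group $G'$ (so $U\subset G'\times G^{op}$ and the argument is verbatim), not a nontrivial associator on the same group; your associator version with $d\eta=(\omega'\times\omega^{-1})|_U$ is the more general statement of \cite{NR14} and does no harm, but it is not what the paper asserts.

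There is, however, one concrete error in the middle of your sketch. You claim that, as an abelian category, $\mathcal{M}(U,\eta)$ is ``a sum of copies of $\Rep_\eta(U)$ indexed by $\Gamma/U$'' and that it ``collapses to $\Vect_{\Gamma/U}$ precisely once $\CC_\eta[U]$ is simple.'' Both halves are wrong. Writing $\mathcal{M}(U,\eta)$ as the category of $\Gamma$-graded right $\CC_\eta[U]$-modules, the relevant $U$-action on $\Gamma$ is by \emph{translation}, which is free; since every basis element $\bar u\in\CC_\eta[U]$ is invertible, acting by $\bar u$ gives isomorphisms $M_x\cong M_{xu}$, so a module is determined by one component per coset and $\mathcal{M}(U,\eta)\simeq\Vect_{\Gamma/U}$ for \emph{every} pair $(U,\eta)$, with no condition whatsoever --- exactly as the lemma states. (Your ``copies of $\Rep_\eta(U)$'' would be the answer for an action with stabilizer $U$, not a free one.) Correspondingly, your expectation that invertibility forces $\CC_\eta[U]$ to be simple, or Morita-trivial, is provably false: the unit bimodule $\Vect_G$ itself corresponds to $U=\{(g,g)\mid g\in G\}$ with $\eta=1$, which satisfies all three itemized conditions (here $U_1=U_2=1$, so the pairing condition is vacuous), yet $\CC[U]\cong\CC[G]$ is not simple for $|G|>1$; simplicity of $\CC_\eta[U]$ occurs only in the ``big cell'' case where $U$ is abelian and $\eta$ nondegenerate on all of $U$. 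Beyond this, note that what you defer as ``bookkeeping'' --- showing the two surjectivity conditions, abelianness of $U_1,U_2$, and nondegeneracy of the commutator pairing $\eta(h_1,h_2)\eta^{-1}(h_2,h_1)$ on $U_1\times U_2$ are jointly \emph{equivalent} to both one-sided duals being $\Vect_G$ --- is the entire content of the cited proofs, so as it stands your proposal is an accurate road map with one broken step rather than a proof.
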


The braided equivalences of the center can be described very explicitly using 
the following well-known description:
\begin{lemma}
  $\zcat(\Vect_G)$ is semisimple and the simple objects are $\ocat_g^{\chi}$ 
where $[g]\subset G$ is a conjugacy class and $\chi$ an irreducible character 
of the centralizer $\Cent(g)$
\end{lemma}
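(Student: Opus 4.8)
The plan is to identify $\zcat(\Vect_G)$ with a concrete representation category and then decompose it. First I would invoke the standard equivalence between the Drinfeld center of $\Vect_G=\CC^G\md\mod$ and the category of Yetter--Drinfeld modules over $\CC[G]$, equivalently modules over the Drinfeld double $\D(\CC[G])$. Concretely an object is a $G$-graded vector space $V=\bigoplus_{g\in G}V_g$ together with a $G$-action satisfying $h\cdot V_g\subseteq V_{hgh^{-1}}$, where the half-braiding $\tau_{\CC_h,V}$ of the generating invertible object $\CC_h$ (concentrated in degree $h$) is recovered from the action of $h\in G$. This translation is routine and reduces the statement to a representation-theoretic decomposition.

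Next I would decompose by support on conjugacy classes. For a conjugacy class $[g]\subseteq G$ the subspace $V_{[g]}=\bigoplus_{g'\in[g]}V_{g'}$ is stable under both the grading and the $G$-action, since conjugation permutes $[g]$; hence it is a subobject, and every $V$ splits as $V=\bigoplus_{[g]}V_{[g]}$. Thus the category factors as a product indexed by conjugacy classes. Fixing a representative $g$, the relation $h\cdot V_g\subseteq V_{hgh^{-1}}$ shows that $V_g$ is preserved exactly by $h\in\Cent(g)$, so the homogeneous component $V_g$ carries a representation of the centralizer $\Cent(g)$.

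The key step is to upgrade $V\mapsto V_g$ to an equivalence between the Yetter--Drinfeld modules supported on $[g]$ and $\Rep(\Cent(g))$. Choosing coset representatives $t_i$ with $G=\bigsqcup_i t_i\Cent(g)$, one reconstructs $V$ from $V_g$ by induction, $V\cong\CC[G]\otimes_{\CC[\Cent(g)]}V_g$, the grading being forced by $t_i\cdot V_g=V_{t_igt_i^{-1}}$; I would verify that this functor is inverse to restriction and respects morphisms. Under this equivalence the simple objects correspond to the irreducible characters $\chi$ of $\Cent(g)$, which is exactly the object $\ocat_g^\chi$, and the splitting over conjugacy classes yields the asserted parametrization. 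Semisimplicity is then immediate: each $\Rep(\Cent(g))$ is semisimple over $\CC$ by Maschke's theorem, and a product of semisimple categories is semisimple.

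I expect the \emph{main obstacle} to be the careful verification that the induction--restriction pair is a genuine equivalence of Yetter--Drinfeld categories, namely that the $G$-action and grading on $\CC[G]\otimes_{\CC[\Cent(g)]}V_g$ are the unique Yetter--Drinfeld structures extending the given $\Cent(g)$-action, and that the construction is natural. Everything else — the splitting by conjugacy class and the appeal to Maschke — is formal. Alternatively one could avoid the explicit equivalence by arguing directly that $\D(\CC[G])$ is a semisimple algebra and counting its Wedderburn blocks, but the induction picture has the advantage of exhibiting the objects $\ocat_g^\chi$ explicitly, as needed in the sequel.
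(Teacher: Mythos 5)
Your proposal is correct; the paper states this lemma without proof, citing it as well-known, and your argument --- translating $\zcat(\Vect_G)$ into $G$-graded vector spaces with compatible $G$-action, splitting by support on conjugacy classes, and establishing the induction--restriction equivalence with $\Rep(\Cent(g))$ (where $t_i\cdot V_g=V_{t_igt_i^{-1}}$ holds with equality by applying $t_i^{-1}$), then invoking Maschke --- is exactly the standard proof behind that citation. The one step you flag as the main obstacle is indeed routine: the grading on $\CC[G]\otimes_{\CC[\Cent(g)]}V_g$ is forced by the Yetter--Drinfeld compatibility, so the equivalence and hence the parametrization by pairs $(\,[g],\chi\,)$ with $\chi\in\mathrm{Irr}(\Cent(g))$ follows as you describe.
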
~\\

\subsubsection{We discuss the group $\vcat$}~ \\

Let $v:G'\to G$ be a group isomorphism. The corresponding invertible 
$\Vect_{G'}$-$\Vect_G$-bimodule category is given ${_v}(\Vect_G)$. This 
corresponds to the choice $G\cong U\subset G'\times G^{op}$ the graph of $v$ 
and $U_1=U_2=\{1\},\eta=1$. \\

\noindent
The ENOM functor assigns to this the following category equivalence of the 
centers:
$$\ocat_g^\chi\longmapsto \ocat_{v(g)}^{\chi(v^{-1}(\bullet))}$$~\\

\subsubsection{We discuss the group $\bcat$}~ \\

Let $F:\Vect_{G'}\to \Vect_G$ a monoidal equivalence: It is given on objects by 
a group isomorphism $v:G'\to G$ and the monoidal structure by a $2$-cocycle 
$\mu \in H^2(G',\CC^\times)$, which defines a Bigalois object $\CC_\sigma[G']$ 
with left coaction composed with $v$. Respective, the monoidal equivalence 
$F^{-1}$ is given by $f=v^{-1}$ and the $2$-cocycle 
$\sigma(g,h)=\mu^{-1}(v^{-1}(g),v^{-1}(h))$. The invertible 
$\Vect_{G'}$-$\Vect_G$-bimodule category is again given by definition by 
$\mcat={_F}(\Vect_G)$, which corresponds again to the choice $G\cong U\subset 
G'\times G^{op}$ the graph of $v$ and $U_1=U_2=\{1\}$ but now includes 
nontrivial $\eta$.\\

\noindent
The ENOM functor assigns to this the following category equivalence of the 
centers
$$\ocat_g^\chi\longmapsto 
\ocat_{v(g)}^{\chi(v^{-1}(\bullet))\frac{\mu(v^{-1}(\bullet),g)}{\mu(g,v^{-1}
(\bullet))}}$$
with nontrivial monoidal structure given by $\mu$ on the coaction.\\

\begin{remark}
 It is informative to also look at the bimodule categories from the dual  
perspective of the subgroup $\ecat$ of $\Rep(G')$-$\Rep(G)$-bimodule categories, 
where we obtain $\mcat={_v}(\CC_\sigma[G]\md\mod)$. 
\end{remark}~ \\

\subsubsection{We discuss the group $\ecat$}~\\

The monoidal equivalences $\Rep(G')\to H\md\mod$ are given by Bigalois objects 
${_f}R_{\Rep(G')}$ where $H$ is the Doi twist of $\CC[G']$ and $f\in 
\AutHopf(H)$. By \cite{Dav01} the Galois objects are given by pairs $(S,\eta)$ 
where $S$ is a subgroup of $G'$ and $\eta\in Z^2(S,\CC^\times)$ nondegenerate; 
then the Galois object is an induced representation $R={_f}(\CC^{G'}\otimes_{S} 
\CC_\eta[S])$. The Hopf algebra $H$, being the Doi twist of $\CC[G']$, is fixed 
up to isomorphism $f$ by the choice $S,\eta$. In particular obtaining again a 
group algebra $H=\CC[G]$ is equivalent to $S$ being normal abelian and the 
cohomology class $[\eta]$ being conjugation invariant. The isomorphism type of 
$G$ is a certain extension $\hat{S}\mapsto G\to G'/S$ determined by $\eta$.\\
In particular it is sufficient (but not necessary) to achieve $G'\cong G$ that 
$\eta$ is conjugation invariant as a $2$-cocycle. This additional condition is 
(see e.g. \cite{LP15}) equivalent to so-called {\it laziness}. In particular 
the extension $G$ is isomorphic to $G'$ by the trivial isomorphism (identity on 
$G'/S$ and the nondegenerate form defined by $\alpha$ identifying $S\cong 
\hat{S}$) and the additional morphism $f$ is actually a Hopf algebra 
isomorphism induced by a group isomorphism $v:G\to G'$. In this case we may 
assume $v=\id$ without loss of generality and realize $v\in\vcat$ as above.\\

The corresponding invertible $\Rep(G')$-$H\md\mod$-bimodule category induced by 
$F$ has been shown to be $R$-mod. To link this to the description in Lemma 
\ref{lm_invertibleBimoduleCategories} we observe that since $\CC_\eta[S]$ is by 
assumption a simple algebra, we have a category equivalence $R\md\mod \cong 
\CC^{G'/S}\md\mod=\Vect_{G'/S}$. In the lazy case it is easy to check that the 
following data in the Lemma describes our bimodule category: Identifying 
$G'/S=G/\hat{S}$, denoting the quotient map by $\pi$ and identifying 
$G^{op}\cong G$ via inverse we take 
$$U=\{(g',g)\in G'\times G\mid \pi(g')=v(\pi(g)^{-1})\}\qquad (S\times 
\hat{S})\to U\to G'/S$$
In particular $U_1=U\cap (G'\times 1)=S$ and $U_2=U\cap (1\times G)=\hat{S}$. 
There is a diagonal quotient $U\to S\times G'/S$, pulling back the $2$-cocycle 
$\eta$ gives a $2$-cocycle on $U$ which is nondegenerate on $S\times 
S,\;\hat{S}\times \hat{S}$ and $S\times\hat{S}$ as necessary.\\

The ENOM-functor assigns to this the category equivalence of the centers 
obtained above. It can be worked out for a given $\ocat_g^\chi$ by decomposing 
the induced representation according to the modified coaction, and the monoidal 
structure is given by that of $F$, but there is no convenient group-theoretic 
formula for this. We work out the following case:
\begin{lemma}
 The formula from Section \ref{sec_subgroupE} reduces for a lazy
 \footnote{This ``lazy'' here is much less critical  than in \cite{LP15}, where 
we classify lazy {\it braided} autoequivalences of the Drinfeld center. In the 
present approach it is merely a technical inconvenience that we have good 
explicit formulae only for (still non-lazy) induction from a lazy {\it 
monoidal} autoequivalence of $\Rep(G)$. Does the given group-theoretic formula 
continue to hold for nonlazy monoidal equivalences?}
  monoidal equivalence $\Rep(G)\to \Rep(G)$ given by $S,\eta,v=\id$ as follows 
on objects $\ocat_1^V$:\\
 Let the restriction of the irreducible $G$-representation $V$ to $S$ (abelian, 
normal) be decomposed according to Clifford theory into irreducible 
representations $V=\bigoplus_{i=1}^t E_i\otimes V_i$, where conjugation of $G$ 
acts transitively on the $1$-dimensional $S$-representations $V_i=\CC_{\chi_i}$ 
and all the multiplicity spaces $E$ (trivial $S$-representations) are of same 
dimension. Use the nondegeneracy of $\eta$ to identify $\hat{S}\cong S$ to get 
a $G$-conjugacy class $[s_i]$ by 
$\chi_i(r)=\frac{\eta(r,s_i)}{\eta(s_i,r)}=:\langle r,s_i\rangle$. Then the 
centralizer of any $s_i$ is the corresponding inertia subgroup $I_i\subset G$ 
fixing $\chi_i$ and hence acting on $E_i$. Then we claim  
 $$\ocat_1^V \longmapsto \ocat_{[s_i]}^{E_i\otimes V_i}$$
\end{lemma}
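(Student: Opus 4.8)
The plan is to specialise the explicit description of $\Phi(\Ind(F))$ from Section~\ref{sec_subgroupE} to $\cat=\Vect_G$ and to read off its effect on the object $\ocat_1^V$ by decomposing the relevant induced representation, exactly as suggested after that lemma. I would begin by writing both endpoints as $G$-graded $G$-modules (Yetter--Drinfeld modules over $\CC^G$): the object $\ocat_1^V$ is the irreducible $G$-module $V$ placed entirely in grading degree $1$, so its half-braiding is trivial and all of its structure is carried by the $G$-action; the claimed image $\ocat_{[s_i]}^{E_i\otimes V_i}$ has the same underlying $G$-module, since by Clifford theory $V\cong\Ind_{I_i}^G(E_i\otimes V_i)$, but is now graded by the conjugacy class $[s_i]$, with the homogeneous component of degree $s_i$ carrying the $\Cent(s_i)=I_i$-representation $E_i\otimes V_i$. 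The content of the lemma is therefore that the equivalence leaves the $G$-module essentially untouched and only installs a nontrivial grading, manufactured out of the $S$-weights of $V$ by means of the nondegenerate cocycle $\eta$.

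To produce this grading I would make $F$ concrete through its Galois object $R={_f}(\CC^{G'}\otimes_{S}\CC_\eta[S])$, from which Section~\ref{sec_subgroupE} extracts the cocycle $\sigma$, the cleaving map $\iota$, and the Hopf isomorphism $f$; in the lazy case with $v=\id$ the map $f$ is trivial, so the $G$-action is transported unchanged and the entire modification sits in the grading. The main computation is then to evaluate the $F$-modified Yetter--Drinfeld grading on $V$. Restricting to $S$ and decomposing $V|_S=\bigoplus_{i=1}^t E_i\otimes V_i$, I would show that on the $\chi_i$-isotypic component the modified grading is homogeneous of degree exactly $s_i$, where $s_i\in S$ is characterised by $\chi_i(r)=\eta(r,s_i)/\eta(s_i,r)=\langle r,s_i\rangle$. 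This identity is the precise point at which nondegeneracy of $\eta$ enters, through the isomorphism $S\cong\hat S$ it induces, and unwinding $\iota$ and $\sigma$ to see that the $S$-action eigenvalue $\chi_i$ is converted into the grading degree $s_i$ is the step I expect to be the main obstacle.

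Once the grading is pinned down the remainder is bookkeeping. Because $\eta$ is conjugation invariant the identification $S\cong\hat S$ is $G$-equivariant, so the $G$-conjugation action on the characters $\chi_i$ agrees with the conjugation action on the elements $s_i$; the transitivity in Clifford theory then forces the $s_i$ to form a single conjugacy class $[s_i]$ and gives $I_i=\mathrm{Stab}_G(\chi_i)=\Cent_G(s_i)$. Consequently the $G$-action permutes the homogeneous components precisely as in $\Ind_{I_i}^G$, the degree-$s_i$ component is $E_i\otimes V_i$ as an $I_i$-representation, and the two half-braidings agree automatically by the coherence already verified in the general lemma of Section~\ref{sec_subgroupE}. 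This yields $\ocat_1^V\mapsto\ocat_{[s_i]}^{E_i\otimes V_i}$, as claimed.
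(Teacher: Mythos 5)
Your plan coincides in outline with the paper's own proof: laziness lets you take $v=\id$, so the functor is the identity on underlying $G$-modules and the whole content is the modified grading, which the paper likewise treats as the $\sigma^{-1}$-twisted $\CC^G$-action on $V$; your bookkeeping paragraph (conjugation-equivariance of $S\cong\hat S$, hence a single class $[s_i]$ and $I_i=\mathrm{Stab}_G(\chi_i)=\Cent(s_i)$ acting on $E_i\otimes V_i$) also matches the paper. But the one step you explicitly defer --- ``unwinding $\iota$ and $\sigma$ to see that the $S$-action eigenvalue $\chi_i$ is converted into the grading degree $s_i$'', which you yourself flag as the main obstacle --- \emph{is} the entire proof in the paper, and your proposal supplies no ingredient with which to carry it out. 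The missing ingredient is the explicit formula for the lazy cocycle on the dual group algebra, quoted in the paper from \cite{LP15a},
$$\sigma(e_a,e_b)=\frac{\delta_{a,b\in S}}{|S|^2}\sum_{t,t'\in S}\eta(t,t')\langle t,a\rangle\langle t',b\rangle ,$$
without which the claim that the $\chi_j$-isotypic component becomes homogeneous of degree $s_j$ remains an assertion rather than a computation.

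Concretely, the paper rewrites the grading as the $\CC^G$-action and evaluates the twisted projector
$$e_{s_i}._\sigma v=\sum_{g,h\in G}\sigma^{-1}(e_{s_i}^{(1)},e_g)\cdot\bigl(h.e_{s_i}^{(2)}.g.v\bigr)\cdot\sigma(e_h,e_{s_i}^{(3)})$$
on $v\in E_j\otimes V_j$, using that $v$ sits in grade $1\in G$ (so by the $\delta_{a,b\in S}$ in the cocycle formula only $g,h\in S$ contribute), then applies the orthogonality relation for the nondegenerate pairing,
$$\frac{1}{|S|}\sum_{s's''=s}\langle x,s'\rangle\langle y,s''\rangle=\delta_{x,y}\langle x,s\rangle ,$$
together with the defining property $\chi_j(r)=\langle r,s_j\rangle$ of $s_j$, to conclude $e_{s_i}._\sigma v=\delta_{i,j}\,v$. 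To complete your argument you must either reproduce this calculation or derive this $\sigma$ directly from your cleaving map $\iota$ for $R=\CC^{G}\otimes_S\CC_\eta[S]$ --- neither is routine, and this is exactly where nondegeneracy of $\eta$ does its work. As it stands, your proposal is a correct plan whose decisive step is acknowledged but not executed.
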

\begin{proof}
 Because the lazy case allows without restriction in generality to choose 
$v=\id$ we have $F=\id$ on objects. Thus as representations 
$\Phi(\Ind(F))\ocat_1^V=\ocat_1^V=V$ and as  
$\Phi(\Ind(F))\ocat_g^\chi={_{\sigma^{-1}}}\ocat_1^V$. So it remains to 
determine the $\sigma^{-1}$-twisted coaction, which is the 
$\sigma^{-1}-$twisted $\CC^G$-action. We need to reformulate also $G$-action as 
$\CC^G$-coaction via $v\mapsto \sum_g e_g\otimes g.v$. We decompose 
$V=\bigoplus_{i=1}^t E_i\otimes V_i$ as asserted and check the twisted action of 
the projector $e_{s_i}$ for $s_i$ defined as asserted on $v\in E_j\otimes V_j$:
 \begin{align*}
  e_{s_i}._\sigma v
  &=\sum_{g,h\in G} \sigma^{-1}(e_{s_i}^{(1)},e_g)\cdot 
(h.e_{s_i}^{(2)}.g.v)\cdot \sigma(e_h,e_{s_i}^{(3)})  
 \end{align*}
 We now use our formula in \cite{LP15a}:
 $$\sigma(e_a,e_b)=\frac{\delta_{a,b\in S}}{|S|^2}\sum_{t,t'\in S} 
\eta(t,t')\langle t,a\rangle\langle t',b\rangle$$
 and the fact that $v$ is in grade $1\in G$ to evaluate our expression. Then we 
exploit the fact that for a nondegenerate pairing on an abelian group holds 
 $\frac{1}{|S|}\sum_{s's''=s}\langle x,s\rangle\langle s,y\rangle=\delta_{x,y}$ 
and hence 
 $\frac{1}{|S|}\sum_{s's''=s}\langle x,s'\rangle\langle 
y,s''\rangle=\delta_{x,y}\langle x,s\rangle$ and that any $r\in S$ acts on $v$ 
by the $1$-dimensional character $\chi_j(r)=\langle r,s_j\rangle$:
 \begin{align*}
  &=\sum_{g,h\in S, s_i's_i''=s_i} \frac{1}{|S|^2}\sum_{t,t'\in S} 
\eta^{-1}(t,t')\langle t,s_i'\rangle\langle t',g\rangle
  \cdot (hg.v)\\
  &\cdot \frac{1}{|S|^2}\sum_{t'',t'''\in S} \eta(t'',t''')\langle 
t'',h\rangle\langle t''',s_i''\rangle\\
  &=\frac{1}{|S|^2}\sum_{r\in S} \sum_{t,t'\in S} \eta^{-1}(t,t')\langle 
t,s_i\rangle\langle t',r\rangle
  \cdot (r.v)\cdot \eta(t',t)\\  
  &=\frac{1}{|S|^2}\sum_{r\in S} \sum_{t,t'\in S} \langle t',t\rangle \langle 
t,s_i\rangle\langle t',r\rangle\chi_j(r)\cdot v\\
    &=\frac{1}{|S|}\sum_{r\in S}  \langle s_i,r\rangle \langle r,s_j\rangle 
\cdot v= \delta_{i,j}\cdot v
 \end{align*}
 This shows that $E_j\otimes V_j$ has now a coaction grade $s_j$ as asserted. 
\end{proof}

\begin{example}\label{exm_exoticReflection} We also wish to give an example of 
induction for a non-lazy autoequivalence. Consider $\Sp_{2n}(\F_2)$ acting on 
$S:=\ZZ_2^{2n}$ with invariant symplectic form $\langle\bullet,\bullet\rangle$. 
There is a unique nondegenerate cohomology class $[\eta]\in H^2(S,\CC^\times)$ 
associated to the symplectic form, which is hence invariant, however no 
representing $2$-cocycle is not invariant. It is known (\cite{Dav01} Exm. 7.6) 
that this relates the semidirect product $G'=S\rtimes \Sp_{2n}(\F_2)$ and the 
nontrivial extension $G=S.\Sp_{2n}(\F_2)$ via the (then non-lazy) Bigalois 
object associated to $S,\eta$.\\ 

Of particular interest is the case $n=1$ where both groups are isomorphic 
$G\cong G'=\SS_4$ but still $v$ interchanges the conjugacy classes $[(12)]$ and 
$[(1234)]$ (with both $6$ elements) and is hence no Hopf algebra isomorphism. 
The non-lazy monoidal autoequivalence $F$ of $\SS_4$ interchanges the two 
$3$-dimensional representations $\chi_3,\chi_3\otimes \sgn$and is visible as 
symmetry in the character table. The induction of this $F$ would yield a 
bimodule category $\mcat=R\md\mod$ which would be described by a $U\subset 
\SS_4\times\SS_4$ containing tuples such as $((12),(1234))$.\\
\end{example}

\subsubsection{We discuss the elements $\pcat$}~\\

We first observe that $\CC^{G}$ seems to have no interesting semidirect 
decompositions, because of contravariance this would imply a left-split 
sequence of groups. On the other hand assume $G=N\rtimes Q$, then 
$H^*=\CC[G]=\CC[N]\rtimes\CC[Q]$. Next we observe that partial dualization 
$r(\CC[G])$ can never return a group ring (except for a direct product, for 
which it coincides with $r'$), because the coaction of $A$ on $K$ is trivial, 
so to be self-dual the action would have to be trivial as well resulting in a 
direct product. \\

So we consider partial dualization $r'$ on $H^*=\CC[G]=\CC[N]\rtimes\CC[Q]$ 
where $N$ is an abelian group and a self-dual $Q$-module. We have already 
derived in \cite{LP15} a formula for the action of $r'$ as a braided 
equivalence of $\zcat(\Vect_G)$ on objects $\ocat_1^\chi$:

Similar to $\ecat$, let the restriction of the irreducible 
$G$-representation $V$ to $N$ (abelian, 
normal) be decomposed according to Clifford theory into irreducible 
representations $V=\bigoplus_{i=1}^t E_i\otimes V_i$, use the paring to 
map the $1$-dimensional representations $V_i\in N^*$ to a 
a $G$-conjugacy class $[s_i] \subset N$. Then the 
centralizer of any $s_i$ is the corresponding inertia subgroup $I_i\subset G$ 
fixing $V_i$ and hence acting on $E_i$. Then we claim  
 $$\ocat_1^V \longmapsto \ocat_{[s_i]}^{E_i}$$
(the only difference is no $V_i$ appears in the centralizer action) \\

Also the corresponding module category $\Vect_Q$ is described in striking 
similarity to $\ecat$ by the same subgroup 
$$U=\{(g',g)\in G\times G\mid \pi(g')=\pi(g)^{-1}\}
\qquad (N\times\hat{N})\to U\to Q$$
where $\pi:G\to Q=G/N$. But compared to $\ecat$ the $2$-cocycle is 
different: Consider again the diagonal quotient $N\to U\to N\times Q$ and 
consider the Masumoto spectral sequence
$$1\to H^1(N\times Q,\CC^\times)\to H^1(U,\CC^\times)\to H^1(N,\CC^\times)\to$$
$$\to H^2(N\times Q,\CC^\times)\stackrel{pullback}{\longrightarrow} 
H^2(U,\CC^\times)_N\stackrel{form}{\longrightarrow}
(N\times Q)\otimes N\to H^3(N\times Q,\CC^\times)\to H^3(U,\CC^\times)$$
where the subindex $N$ means cohomologically trivial if restricted to the 
kernel $N$. For $\ecat$ we took the pullback of a $2$-cocycle on $N$, now we 
should take the preimage of our nondegenerate form on $N\times N$, which 
becomes trivial in $H^3$ and is hence in the image.~\\ 

\subsubsection{Example: Elementary abelian groups}\label{sec_Fp_AutBr}~\\

\noindent
    For $G=\F_p^n$ a finite vector space we know directly
    $$\Aut_{br}(\DG\md\mod)=
    \begin{cases}
    O_{2n}(\F_p), & p\neq 2\\
    Sp_{2n}(\F_p), & p=2
    \end{cases}$$
    For abelian groups, all $2$-cocycles over $\DG$ are lazy and the
    results of \cite{BLS15} gives a product decomposition of
    $\BrPic(\Rep(G))$. The subgroups in question are
    \begin{itemize}
    \item $\vcat\cong \Out(G) = \GL_n(\F_p)$.
    \item $\bcat=\Out(G)\rtimes (\F_p^n\wedge \F_p^n)$
    the latter as an additive group.
    \item $\ecat=\Out(G)\rtimes (\F_p^n\wedge \F_p^n)$
    the latter as an additive group.
    \item The set $\pcat$ consists of $n+1$ equivalence classes of 
partial dualizations for each possible dimension $d$ of a direct
    factor $\F_p^d\cong C\subset G$. Especially the full dualization on $C=G$  
conjugates
    $\bcat$ and $\ecat$. In this case the proposed decomposition is actually a  
double coset decomposition, which is a variant of the
Bruhat decomposition of $O_{2n}(\F_p)$ of type $D_n$
    for $2\nmid p$ resp. $Sp_{2n}(\F_2)$ of type $C_n$.\\
    More precisely, our result reduces to the Bruhat decomposition of the Lie
groups $C_n$ resp. $D_n$ relative to the parabolic subsystem $A_{n-1}$. In
particular there are $n+1$ double cosets of the parabolic Weyl group $\SS_n$,
accounting for the $n+1$ non-isomorphic partial dualizations on subgroups 
$\ZZ_p^k$ for $k=0,...,n$.
    \end{itemize}~ \\

\subsubsection{Examples for nonabelian groups}~\\

\noindent
Let $G$ be a nonabelian simple group, then 
\begin{itemize}
    \item $\vcat\cong \Out(G)$.
    \item $\bcat=\Out(G)\rtimes H^2(G,\CC^\times)$
    (the latter as an additive group).
    \item $\ecat=\vcat$ as there are no nontrivial abelian 
    normal subgroups.	
    \item The set $\pcat$ is empty as there are no nontrivial semidirect 
    factors.	
\end{itemize}
Let $G=\SS_3$, then $\BrPic(G)=\ZZ_2$ (see already \cite{NR14}), more precisely:
\begin{itemize}
    \item $\vcat\cong \Out(G)=1$.
    \item $\bcat=\vcat\rtimes H^2(G,\CC^\times)=1$.
    \item $\ecat=\vcat=1$ since the only nontrivial abelian normal 
    subgroup is cyclic and has hence no nontrivial cocycles. 
    \item The set $\pcat$ contains a nontrivial reflection $r'$ on the normal 
    subgroup $\langle(123)\rangle$. As an element in $\AutBr(\D\SS_3\md\mod)$ 
    it permutes the objects as follows\footnote{$triv,sgn,ref$ the irreducible 
representations of $\SS_3$ and $\pm$ the two $1$-dimensional representations of 
the centralizer $\ZZ_2$ of $(12)$ and $1,\zeta,\zeta^2$ the three 
$1$-dimensional representations of the centralizer $\ZZ_3$ of $(123)$.
Whether $\zeta,\zeta'$ are the same roots of unity 
depends on the right choice of the pairing on $\ZZ_3$.}:
  $$\hspace{1cm} \ocat_1^{triv},\;\ocat_1^{sgn},\;\ocat_1^{ref},\;
  \ocat_{(12)}^{+},\;\ocat_{(12)}^{-},\; 
  \ocat_{(123)}^{1},\;\ocat_{(123)}^{\zeta},\;\ocat_{(123)}^{\zeta^2}$$
  $$\;\longrightarrow\;
  \ocat_1^{triv},\;\ocat_1^{sgn},\;\ocat_{(123)}^{1},\;
  \ocat_{(12)}^{+},\;\ocat_{(12)}^{-},\;
    \ocat_{1}^{ref},\;\ocat_{(123)}^{\zeta'},\;\ocat_{(123)}^{\zeta'^2}
  $$
  
  As an invertible bimodule category $\mcat$ is is the abelian category 
$\ZZ_2\md\mod$ with highly nontrivial bimodule category constraint. \\

We remark already at this point, that the associated group-theoretical  
$\ZZ_2$-extension of $\SS_3\md\mod$ is the fusion category 
$(\hat{\sl}_2)_4\md\mod$ which decomposes as an abelian category 
to $\SS_3\md\mod\oplus \SS_3/\ZZ_3\md\mod$ with $3+2$ simple objects.   
\end{itemize}

More examples are discussed in \cite{BLS15} Sec. 6.~\\

\subsection{Taft algebra}~\\

As a example which is not of group type, we now discuss the Taft algebra, for 
which the description of the Brauer Picard group can be checked against the 
list of bimodule categories in \cite{FMM14} (although there is unfortunately no 
description of the Brauer Picard group):
\begin{definition}[Taft algebra]
  Let $q$ be a primitive $\ell$-th root of unity 
prime) 
  and let $T_q$ be the Hopf algebra generated by $g,x$ with relations and 
coproduct as follows: 
  $$g^\ell=1\qquad x^\ell=0\qquad xg=qgx$$
  $$\Delta(g)=g\otimes g\qquad \Delta(x)=g\otimes x+x\otimes 1$$
  
  $T_q$ has dimension $\ell^2$ and decomposes into a Radford biproduct product 
$T_q=K \rtimes A=\CC[x]\rtimes \CC[\ZZ_\ell]$ where the $A$-action and 
-coaction on $K$ is given by $g.x=qx$ and $\delta(x)=g\otimes x$. It is a 
self-dual Hopf algebra via the linear forms $g^*:g,x\mapsto q,0$ and 
$x^*:g,x\mapsto 1,1$.\\
  
  The Taft algebra appears naturally as the Borel part of the small quantum 
groups $u_{q^{-1/2}}(\sl_2)^+$. The Drinfel'd double $\D T_q$ is generated by 
two isomorphic Taft algebras $g,x$ and $g^*,x^*$ with relations
  $$x^*g=q^{-1}gx^*\qquad xg^*=q^{-1}g^*x
  \qquad xx^*-qx^*x=\frac{g-g^*}{q-q^{-1}}$$
  It has the full quantum group as quotient by the central element $gg^*-1$.    
 
\end{definition}

\noindent
We recall some well-known properties of this Hopf algebra:
\begin{fact}
$$\AutHopf(T_q)\cong\CC^\times \qquad \OutHopf(H)\cong \CC^\times / \langle 
q\rangle$$ 
where $c\in \CC^\times$ acts by $g,x\mapsto g,cx$. This is because the 
skew-primitive $x$ is determined uniquely up to scalar and the grouplike $g$ is 
determined by $x$; on the other hand the asserted map is a Hopf algebra 
automorphism. Conjugation by $g$ gives the inner automorphism $c=q$, so 
$\OutHopf(H)\cong \CC^\times / \langle q\rangle$  
\end{fact}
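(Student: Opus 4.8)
The plan is to determine an arbitrary $\phi\in\AutHopf(T_q)$ by its action on the grouplike and skew-primitive elements (which generate $T_q$ as an algebra), and then to identify the inner part by conjugating with grouplikes. First I would record the relevant coalgebra data. The grouplikes of $T_q$ form the cyclic group $\langle g\rangle\cong\ZZ_\ell$, which is exactly the coradical $\CC[\langle g\rangle]$; and the \emph{nontrivial} skew-primitives are, modulo the trivial ones $a-b$, spanned by the elements $g^i x$, where $g^i x$ is $(g^{i+1},g^i)$-skew-primitive. This last statement is the usual coradical-filtration fact for the Taft algebra and can be checked directly from the $q$-binomial coproduct $\Delta(x^j)=\sum_i \binom{j}{i}_q\, g^i x^{j-i}\otimes x^i$ together with the vanishing of $\binom{j}{i}_q$ for $0<i<j<\ell$ at a primitive $\ell$-th root of unity.

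Next I would constrain $\phi$. Since $\phi$ restricts to an automorphism of the coradical $\CC[\langle g\rangle]$, it permutes grouplikes, so $\phi(g)=g^k$ with $\gcd(k,\ell)=1$; being a coalgebra map with $\phi(1)=1$, it sends $x$ to a $(g^k,1)$-skew-primitive. By the classification above, a $(g^k,1)$-skew-primitive can be nontrivial only when $k=1$ (the unique $g^i x$ of type $(g^{i+1},g^i)=(g^k,1)$ is $x$ itself), so $\phi(x)=cx+d(g-1)$; and injectivity of $\phi$ rules out $c=0$, since otherwise $\phi(x)\in\CC[\langle g\rangle]=\im\big(\phi|_{\CC[\langle g\rangle]}\big)$ would force $x\in\CC[\langle g\rangle]$. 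Thus $\phi(g)=g$ and $\phi(x)=cx+d(g-1)$ with $c\neq0$. Applying $\phi$ to $xg=qgx$ gives $\phi(x)g=q\,g\,\phi(x)$; expanding both sides yields $d(g^2-g)=qd(g^2-g)$, hence $d(1-q)(g^2-g)=0$, so $d=0$ because $q\neq1$. Every Hopf automorphism therefore has the asserted form $g\mapsto g$, $x\mapsto cx$.

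For the converse and the group structure I would verify that for each $c\in\CC^\times$ the assignment $\phi_c:\,g\mapsto g,\ x\mapsto cx$ preserves the relations $g^\ell=1$, $x^\ell=0$, $xg=qgx$ and the comultiplication (indeed $\Delta(cx)=g\otimes cx+cx\otimes 1$), and is bijective with inverse $\phi_{c^{-1}}$. Since $\phi_c\phi_{c'}=\phi_{cc'}$, the map $c\mapsto\phi_c$ is a group isomorphism $\CC^\times\to\AutHopf(T_q)$, giving the first claim. For the second, the inner Hopf automorphisms are the conjugations by grouplikes: from $xg=qgx$ one gets $g^{-1}xg=qx$, so conjugation by $g$ realizes $\phi_q$, and running over $\langle g\rangle$ produces exactly $\{\phi_c:c\in\langle q\rangle\}$. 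Quotienting $\AutHopf(T_q)\cong\CC^\times$ by this subgroup yields $\OutHopf(T_q)\cong\CC^\times/\langle q\rangle$.

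The main obstacle is the skew-primitive step: one must genuinely know that $T_q$ carries no \emph{unexpected} $(g^k,1)$-skew-primitives for $k\neq1$, i.e.\ that the coradical filtration is as claimed, which is what forces $\phi(g)=g$ rather than allowing $g\mapsto g^k$ for some other generator. Once this is in hand, everything else is routine bookkeeping with the $q$-commutation relation and the explicit coproduct.
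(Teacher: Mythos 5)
You follow the same route as the paper, which compresses the whole argument into one sentence: the nontrivial skew-primitive $x$ is unique up to scalar, $g$ is then pinned down by $\Delta(x)=g\otimes x+x\otimes 1$, the maps $g,x\mapsto g,cx$ are visibly Hopf automorphisms, and conjugation by the grouplike $g$ realizes $c=q$, giving $\OutHopf(T_q)\cong\CC^\times/\langle q\rangle$. Your expansion of this skeleton --- $\phi$ preserves the coradical so $\phi(g)=g^k$, the skew-primitive classification forces $k=1$ and $\phi(x)=cx+d(g-1)$, applying $\phi$ to $xg=qgx$ kills $d$, and $g^{-1}xg=qx$ identifies the inner automorphisms with $\{\phi_c:c\in\langle q\rangle\}$ --- is correct and supplies exactly the details the paper leaves implicit.

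One side claim, however, is stated backwards: at a primitive $\ell$-th root of unity the Gaussian binomials $\binom{j}{i}_q$ do \emph{not} vanish for $0<i<j<\ell$. Since $[m]_q=(q^m-1)/(q-1)\neq 0$ for $0<m<\ell$, all of these coefficients are nonzero; the first vanishing occurs for $\binom{\ell}{i}_q$ with $0<i<\ell$, which is what makes the relation $x^\ell=0$ compatible with the coproduct. Your classification of skew-primitives in fact \emph{needs} this nonvanishing: writing a candidate skew-primitive as $y=\sum_{i,j}c_{ij}\,g^ix^j$ and using $\Delta(g^ix^j)=\sum_{j'}\binom{j}{j'}_q\,g^{i+j'}x^{j-j'}\otimes g^ix^{j'}$, it is precisely the nonzero middle coefficients that force $c_{ij}=0$ for $2\le j<\ell$, leaving only the grouplikes and the elements $g^ix$. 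Had the binomials vanished as you assert, every $x^j$ would itself be a $(g^j,1)$-skew-primitive and the uniqueness statement --- the crux that forces $\phi(g)=g$ rather than $\phi(g)=g^k$ --- would be false. With the direction of that lemma corrected, the remainder of your proof (the computation giving $d=0$, the group law $\phi_c\phi_{c'}=\phi_{cc'}$, and the passage to $\OutHopf$) stands as written and coincides with the paper's argument.
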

  \begin{fact}
  All irreducible $T_q$-modules are of the form $\CC_\chi$ and all 
indecomposable modules are of the form $\CC_\chi[x]/x^d$ for 
  $\chi\in\hat{\ZZ}_p$ any character of the group ring and $0< d < \ell$ 
\end{fact}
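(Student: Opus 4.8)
The plan is to identify $T_q$ as a basic serial (\emph{Nakayama}) algebra and read off both classifications from its indecomposable projectives. First I would locate the Jacobson radical. Since $x$ is nilpotent and $xg=qgx$, the two-sided ideal $(x)$ is nilpotent — any product of $\ell$ factors from $(x)$ can be reordered, at the cost of powers of $q$, into a combination of monomials $g^a x^b$ with $b\ge\ell$, hence into $x^\ell=0$ — while $T_q/(x)\cong\CC[\ZZ_\ell]$ is semisimple because $\CC$ contains a primitive $\ell$-th root of unity. Hence $(x)=\mathrm{rad}(T_q)$, and the simple $T_q$-modules are exactly the simple $\CC[\ZZ_\ell]$-modules, namely the one-dimensional $\CC_\chi$ with $\chi\in\hat{\ZZ}_\ell$ and $x$ acting by $0$. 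This settles the first assertion.

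For the indecomposables I would first record the shift structure. Writing $e_k=\tfrac1\ell\sum_{j}q^{-kj}g^j\in\CC[\ZZ_\ell]$ for the primitive idempotents (so $ge_k=q^ke_k$), the relation $xg=qgx$ gives $xe_k=e_{k-1}x$. Thus on any module $M=\bigoplus_k M_k$, the $g$-eigenspace decomposition with $g$ acting by $q^k$ on $M_k$, the operator $x$ lowers the grade, $x\colon M_k\to M_{k-1}$, with $x^\ell=0$; equivalently, $T_q$-modules are the representations of the cyclic quiver on $\ell$ vertices bound by the relation that the full loop vanishes. Computing the indecomposable projective $P_k=T_qe_k$ directly, the elements $e_{k-j}x^j$ ($0\le j\le\ell-1$) form a basis on which $g$ acts by $q^{k-j}$ and $x$ acts as the shift $e_{k-j}x^j\mapsto e_{k-j-1}x^{j+1}$, with $e_{k-(\ell-1)}x^{\ell-1}\mapsto 0$. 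Hence each $P_k$ is uniserial of length $\ell$ with radical layers $\CC_{\chi_k},\CC_{\chi_{k-1}},\dots$, where $\chi_k\colon g\mapsto q^k$; since the antipode is an algebra anti-automorphism, $T_q\cong T_q^{\mathrm{op}}$, so the right indecomposable projectives are uniserial as well and $T_q$ is Nakayama.

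The second assertion then follows from the structure theory of Nakayama algebras: every indecomposable module is a quotient $P_k/\mathrm{rad}^dP_k$ of an indecomposable projective. Reading this off the basis above, $P_k/\mathrm{rad}^dP_k$ is spanned by the images of $e_k,\,e_{k-1}x,\dots,e_{k-d+1}x^{d-1}$ and is precisely the module $\CC_{\chi_k}[x]/x^d$; here $d$ ranges over $1\le d\le\ell$, the case $d=1$ recovering the simples and $d=\ell$ the projectives, for $\ell^2$ pairwise non-isomorphic indecomposables (distinguished by top character and dimension). I would note that the stated range should include $d=\ell$, since the $\ell$-dimensional projectives are themselves indecomposable.

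The main obstacle is the indecomposable classification, which is not a one-line radical computation like the simple case: it rests on the uniseriality/Nakayama property. I would either invoke the standard Nakayama structure theorem — indecomposables are exactly the quotients of indecomposable projectives by radical powers — after the explicit verification that each $P_k$ is uniserial, or argue directly in the $g$-eigenspace picture that every indecomposable graded representation of the bounded cyclic quiver is a single ``string'', determined by a top character and a length $\le\ell$. The primitivity of $q$, which makes $x$ genuinely cycle through all $\ell$ characters before $x^\ell=0$ takes effect, is exactly what forces this uniserial shape and caps the length at $\ell$.
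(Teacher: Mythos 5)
Your proof is correct, and it takes a genuinely different --- and more complete --- route than the paper's. The paper argues directly on an arbitrary finite-dimensional module: pick a $g$-eigenvector $v$ with eigenvalue $\chi(g)$, observe from $xg=qgx$ that each $x^k.v$ is again a $g$-eigenvector with eigenvalue shifted by a power of $q$ and that $x^\ell.v=0$, conclude that the irreducibles are the one-dimensional $\CC_\chi$ and the indecomposables the $\CC_\chi[x]/x^d$, and realize the converse via quotients of the regular representation. That eigenvector-chain argument settles the simples instantly, but the step from it to the classification of \emph{indecomposables} is essentially asserted: a priori an indecomposable module could be glued from several such chains, and it is exactly the uniseriality you establish that rules this out. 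Your route via $\mathrm{rad}(T_q)=(x)$, the explicit uniserial projectives $P_k=T_qe_k$ with basis $e_{k-j}x^j$, the isomorphism $T_q\cong T_q^{\mathrm{op}}$ from the antipode, and the Nakayama structure theorem (indecomposables are precisely the $P_k/\mathrm{rad}^d P_k$) supplies precisely this missing justification, at the cost of invoking the standard serial-algebra machinery where the paper stays elementary and self-contained. You are also right about the off-by-one: $d=\ell$ must be allowed, since $\CC_\chi[x]/x^\ell$ is the ($\ell$-dimensional, uniserial, hence indecomposable) projective cover of $\CC_\chi$, and the paper's own appeal to quotients of the regular representation produces these; its stated range $0<d<\ell$, like the stray $\hat{\ZZ}_p$ in place of $\hat{\ZZ}_\ell$, is a typo that your version corrects.
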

\begin{proof}
  Let $V$ be a finite-dimensional $T_q$-module. Let $v$ be a $g.$-Eigenvector 
to some Eigenvalue $\chi(g)$ defining a character of $\ZZ_p$. The relation 
$gxg^{-1}=qx$ shows that $x^k.v$ is a $g.$-Eigenvector to the Eigenvalue 
$q^{k}\chi(g)$ and then at last $x^\ell.v=0$. Hence the only irreducible 
representations are $1$-dimensional $\CC_\chi$ and all indecomposables are 
$\CC_\chi[x]/x^d$ of dimension $0< d< \ell$. Conversely, each module can be 
realized as a quotient of the regular representation.
\end{proof}
\begin{fact}
  There is a braided subcategory of $\zcat(T_q\md\mod)=\D T_q\md\mod$ 
determined by $gg^*-1$ acting by zero, which is equivalent to the category of 
$u_{q^{-1/2}}(\sl_2)\md\mod$. We denote the irreducible highest weight module 
by $V(\lambda)$ for weight $\lambda\in \frac{1}{2}\NN_0$.
\end{fact}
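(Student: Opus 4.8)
The plan is to realize the asserted subcategory as the representation category of a Hopf quotient of $\D T_q$, and then identify that quotient with $u_{q^{-1/2}}(\sl_2)$. First I would check that $gg^*-1$ generates a Hopf ideal. The element $gg^*$ is \emph{grouplike}, since $g,g^*$ are grouplike and commute, so $\Delta(gg^*)=gg^*\otimes gg^*$; and it is \emph{central}, which is a short computation from the cross relations. Explicitly, from $xg^*=q^{-1}g^*x$ one gets $g^*x=qxg^*$, hence $(gg^*)x=g(qxg^*)=q(gx)g^*=q(q^{-1}xg)g^*=x(gg^*)$, and symmetrically $gx^*=qx^*g$ together with $g^*x^*=q^{-1}x^*g^*$ gives $(gg^*)x^*=x^*(gg^*)$; commutation with $g$ and $g^*$ themselves is clear. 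Thus $A:=\D T_q/(gg^*-1)$ is a Hopf algebra.

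Next I would identify $A$ with $u_{q^{-1/2}}(\sl_2)$. In the quotient we have $g^*=g^{-1}$, so $A$ is generated by $g,x,x^*$ with $g^\ell=1$, $x^\ell=(x^*)^\ell=0$, $gxg^{-1}=q^{-1}x$, $gx^*g^{-1}=qx^*$, and the single cross relation $xx^*-qx^*x=(g-g^{-1})/(q-q^{-1})$. Relabelling $K:=g$, $E:=x$ and taking $F$ to be a rescaling of $x^*$ by a power of $K$, these are exactly the defining relations of the small quantum group of $\sl_2$: the $q$-commutation of $K$ with $E,F$ gives the Cartan action, the nilpotency relations are the quantum Serre relations, and the last relation becomes the defining $[E,F]$-relation. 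The parameter is $q^{-1/2}$ rather than $q$ precisely because $T_q\cong u_{q^{-1/2}}^+(\sl_2)$ is the positive Borel part (as recorded in the Definition), and the Drinfeld double of the Borel is the whole quantum group; the appearance of a square root $q^{1/2}$, and correspondingly of half-integer weights, is forced at this step. \textbf{This scalar and parameter bookkeeping --- matching the $q$-twisted commutator coming from the double with the balanced presentation of $u_{q^{-1/2}}(\sl_2)$ by absorbing a power of $K$ into $F$ --- is the main technical obstacle; everything else is formal.}

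With the algebra identification in hand, the categorical statement is routine. For the surjection $\D T_q\twoheadrightarrow A$, the category $A\md\mod$ is the full subcategory of $\zcat(T_q\md\mod)=\D T_q\md\mod$ consisting of those modules on which the kernel, i.e. $gg^*-1$, acts by zero --- this is exactly the subcategory in the statement. It is closed under tensor products because $gg^*$ is grouplike, so it acts on $M\otimes N$ as $(gg^*)\otimes(gg^*)$, which is the identity whenever it is the identity on each factor; it is likewise closed under subquotients and duals. Since $\zcat(T_q\md\mod)$ is braided and the subcategory is full and closed under $\otimes$, for objects $M,N$ in it the braiding $c_{M,N}:M\otimes N\to N\otimes M$ is an ambient morphism between two of its objects, hence restricts; concretely, the grouplike part of the universal $R$-matrix of $\D T_q$ is compatible with $gg^*=1$. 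Thus the subcategory is a braided tensor subcategory and is equivalent, as a braided category, to $u_{q^{-1/2}}(\sl_2)\md\mod$.

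Finally I would record the standard highest-weight classification to fix the notation $V(\lambda)$. Every simple $A$-module is generated by a highest-weight vector annihilated by $E=x$, on which $K=g$ acts by a scalar encoding a weight $\lambda\in\tfrac12\NN_0$; the resulting simple module $V(\lambda)$ has dimension $2\lambda+1$ below the root-of-unity truncation. This reproduces the usual $\sl_2$ labeling and makes the half-integer weights (the $q^{1/2}$ from the previous step) visible, completing the description.
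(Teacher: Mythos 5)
The paper itself contains no proof of this Fact: it appears under ``well-known properties'' of the Taft algebra, and the two essential ingredients are already recorded in the preceding Definition ($gg^*-1$ is central, and the double ``has the full quantum group as quotient'' by it). Your proposal therefore supplies the standard argument the paper leaves implicit, and its architecture is the right one: $gg^*$ central and grouplike, so $(gg^*-1)$ is a Hopf ideal and $A:=\D T_q/(gg^*-1)$ is a Hopf algebra; the modules on which $gg^*-1$ acts by zero are exactly $A\md\mod$, a full subcategory closed under tensor products, duals and subquotients; and a full tensor subcategory of a braided category inherits the braiding. Your centrality computations from the cross relations are correct.

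The one soft spot is the identification step, which you flag as ``the main technical obstacle'' but then declare formal. It is not: with $E=xg^b$ and $F=\mu x^*g^c$, closure of the commutator inside the group algebra forces $b+c=-1$, and then for \emph{every} such choice
\begin{equation*}
EF-FE \;=\; \mu q^{b}\,\frac{(g-g^{-1})\,g^{-1}}{q-q^{-1}} \;=\; \mu q^{b}\,\frac{1-g^{-2}}{q-q^{-1}},
\end{equation*}
whose grouplike part $1-g^{-2}$ is centered at $g^{-1}$, whereas the balanced relation $(K-K^{-1})/(v-v^{-1})$ requires an element $g^{a}-g^{-a}$ centered at $1$; no rescaling of $F$ by a scalar or an integer power of $K=g$ repairs this. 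So the claim ``relabelling $K:=g$ \dots these are exactly the defining relations'' is not literally true. The genuine fix is exactly the square-root bookkeeping you gesture at: one must either pass to a presentation of the small quantum group whose $[E,F]$-relation is itself written in the $q$-commutator/unbalanced form, or introduce a Cartan element that is a square root of $g$ inside $\langle g\rangle\cong\ZZ_\ell$ --- which exists only for $\ell$ odd. This odd-order caveat is real (the paper itself warns later, ``In case $q$ has even order, care has to be taken at this point'') and it is the true source of the parameter $q^{-1/2}$ and of the half-integer weights $\lambda\in\frac{1}{2}\NN_0$ in the statement; your write-up should make it explicit rather than absorb it into a relabelling. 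Everything else --- the Hopf-ideal argument, closure of the subcategory, restriction of the braiding, and the highest-weight labeling --- is sound.
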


We first discuss the group $\vcat\cong \OutHopf(T_q)\cong \CC^\times / \langle 
q\rangle$. The effect of this as a monoidal autoequivalence seems negligible 
because one easily finds a natural transformation to the trivial 
autoequivalence by rescaling $x^kv\mapsto c^k\cdot x^kv$. However, a {\it 
monoidal} natural transformation will return the trivial autoequivalence with a 
nontrivial monoidal structure. This can be easily seen for the tensor product of 
two 2-dimensional indecomposables, which decomposes into 1- and 3-dimensional 
indecomposables which are rescaled differently; the more general formula for 
$\ecat$ below shows the resulting 2-cocycle systematically for 
$(a,b)=(c^\ell,0)$.\\

\noindent
The induced bimodule categories are $\mcat^\vcat_c:={_c}(T_q\md\mod)$.\\

The ENOM functor maps this to the braided equivalence of the Drinfel'd center 
induced by $x,x^*,g,g^-*\mapsto cx,c^{-1}x^*,g,g^*$. Again, this is equivalent 
to a functor that is trivial on objects but with nontrivial monoidal structure. 
\\ 

To determine the group $\ecat$ we need to know the Bigalois objects. This has 
been done in \cite{Sch00} and can today be understood in the context of 
nontrivial lifting \cite{M01}:
\begin{lemma}
The right Galois objects are as follows for any choice $a\in\CC^\times 
,b\in\CC$\footnote{The right Galois objects are isomorphic for all values 
$b\neq 0$, but not as Bigalois objects. There are differently scaled left 
coactions, but the latter can be rescaled to $1$ by a Bigalois isomorphism at 
the cost of $a$.}:
$$R_{a,b}=\langle \tilde{g},\tilde{x} 
\rangle/(\tilde{g}^\ell=a1,\;\tilde{x}^\ell=b1,\;\tilde{x}\tilde{g}=q\tilde{g}
\tilde{x})$$
$$\delta(\tilde{g})=\tilde{g}\otimes g\qquad \delta(\tilde{x})=1\otimes 
x+\tilde{x}\otimes g$$
These all become $T_q$-$T_q$-Bigalois objects $R_{a,b}$ with the left coaction:
$$\delta(\tilde{g})=g\otimes \tilde{g}\qquad \delta(\tilde{x})=1\otimes 
\tilde{x}+ x \otimes \tilde{g}$$
So $\ecat\cong\Bigal(T_q)\cong \CC^\times \ltimes \CC$ and the embedding of 
$\vcat\cong \CC^\times / \langle q\rangle$ goes via $c\mapsto (c^\ell,0)$.
\end{lemma}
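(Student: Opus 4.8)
The plan is to proceed in four steps: build the algebras $R_{a,b}$ and verify they are right $T_q$-Galois objects, invoke the known classification to see they exhaust all right Galois objects, promote each to a biGalois object by checking the stated left coaction, and finally extract the group $\Bigal(T_q)$ from the cotensor product, matching it with $\ecat$ and locating $\vcat$ inside.

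First I would check that $R_{a,b}$ is a well-defined $\ell^2$-dimensional algebra: the relations $\tilde g^\ell=a$, $\tilde x^\ell=b$, $\tilde x\tilde g=q\tilde g\tilde x$ give the basis $\{\tilde g^i\tilde x^j:0\le i,j<\ell\}$, and the stated right coaction extends to a coassociative counital algebra map $R_{a,b}\to R_{a,b}\otimes T_q$, so $R_{a,b}$ is a right $T_q$-comodule algebra. For the Galois property the key point is that $\tilde g$ is invertible, since $\tilde g^\ell=a\in\CC^\times$ forces $\tilde g^{-1}=a^{-1}\tilde g^{\ell-1}$. The canonical map $\beta(r\otimes s)=rs_{(0)}\otimes s_{(1)}$ is left $R_{a,b}$-linear, and a short computation using $\tilde g^{-1}$ shows $1\otimes g$ and $1\otimes x$ lie in its image; a triangular induction along the basis $g^ix^j$ of $T_q$ then puts all of $1\otimes T_q$ in the image, and left linearity upgrades this to surjectivity of $\beta$. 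Since $\dim(R_{a,b}\otimes R_{a,b})=\dim(R_{a,b}\otimes T_q)$, surjectivity gives bijectivity. This part is routine.

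For completeness --- that every right $T_q$-Galois object is some $R_{a,b}$ --- I would appeal to the fact that Galois objects over the pointed Taft algebra are cleft, so each is a cocycle deformation of $T_q$; the classification of the relevant deformations is exactly the computation of \cite{Sch00}, re-read as nontrivial liftings in \cite{M01}, and produces this two-parameter family with $b$ deforming $x^\ell=0$. I would stress that the parametrization by $(a,b)$ is redundant at the level of right Galois objects: rescaling $\tilde g$ absorbs $a$ and, for $b\neq 0$, the objects are moreover mutually isomorphic (the footnote), so only the left coaction of the biGalois structure will make $(a,b)$ a faithful invariant.

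The substantive remaining work is the biGalois and group-theoretic content. I would equip each $R_{a,b}$ with the stated left $T_q$-coaction and verify it is a left comodule-algebra map commuting with the right coaction, so $R_{a,b}$ becomes a $T_q$-$T_q$-biGalois object (left Galois being checked symmetrically). Then, using that biGalois objects form a group under the cotensor product $\Box_{T_q}$ with unit $R_{1,0}=T_q$, I would compute $R_{a,b}\Box_{T_q}R_{a',b'}$ on generators: the grouplikes combine to $\tilde g\otimes\tilde g'$ with $(\tilde g\otimes\tilde g')^\ell=aa'$, so the $a$-parameters multiply, while the skew-primitive generator combines so that the $\tilde x^\ell$-parameters superpose up to a scaling dictated by the $\ZZ_\ell$-grading, producing the semidirect product $\CC^\times\ltimes\CC$. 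I expect this cotensor computation, together with keeping track of the rescalings of the left coaction alluded to in the footnote, to be the main obstacle. The identification $\ecat\cong\Bigal(T_q)$ is then supplied by the general description of $\ecat$ in Section \ref{sec_subgroupE} (Schauenburg's correspondence between biGalois objects and monoidal autoequivalences of $\comod^{T_q}$), and the embedding of $\vcat\cong\CC^\times/\langle q\rangle$ follows by matching the Hopf automorphism $x\mapsto cx$ with $R_{c^\ell,0}$, giving $c\mapsto(c^\ell,0)$; this is well defined and injective on $\CC^\times/\langle q\rangle$ precisely because $q^\ell=1$.
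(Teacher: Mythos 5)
The paper offers no proof of this lemma at all: it is stated as a direct citation of Schauenburg's classification of Bi-Galois objects over the Taft algebras \cite{Sch00}, read through Masuoka's lifting picture \cite{M01}, with the footnote carrying the only argumentative content (rescaling the left coaction at the cost of $a$). Your proposal is therefore strictly more detailed than what the paper supplies, and its architecture is sound. The direct verification that $R_{a,b}$ is an $\ell^2$-dimensional right comodule algebra and that the canonical map $\beta$ is surjective on the generators $1\otimes g$, $1\otimes x$ (using invertibility of $\tilde g$), hence bijective by equality of dimensions, is indeed routine; completeness of the family is exactly the content of \cite{Sch00}, which you cite just as the paper does, so there is no circularity. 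Your normalization argument for the $\vcat$-embedding is precisely the mechanism the paper's footnote alludes to: a biGalois object coming from the Hopf automorphism $x\mapsto cx$ has left coaction scaled by $c$, and the right-colinear rescaling $\tilde g\mapsto c\tilde g$ restores the normalized left coaction at the cost of $a\mapsto c^\ell a$, whence $v_c\mapsto R_{c^\ell,0}$, well defined and injective on $\CC^\times/\langle q\rangle$ because $\ker(c\mapsto c^\ell)=\langle q\rangle$ for $q$ a primitive $\ell$-th root of unity. The one step you flag but do not carry out --- the cotensor computation $R_{a,b}\Box_{T_q}R_{a',b'}$ on the generators $\tilde g\otimes\tilde g'$ and the matched skew-primitive element, yielding the semidirect product law on $(a,b)$ --- is again done in \cite{Sch00}; a complete write-up should either execute it or cite it explicitly, and the identification $\ecat\cong\Bigal(T_q)$ additionally uses (tacitly, in both your text and the paper's) that the induction map from $\AutMon$ is injective here.

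One caveat if you execute your first step literally: with the paper's coproduct convention $\Delta(x)=g\otimes x+x\otimes 1$, the displayed coaction $\delta(\tilde x)=1\otimes x+\tilde x\otimes g$ is not coassociative --- compare $(\delta\otimes\id)\delta(\tilde x)=1\otimes 1\otimes x+1\otimes x\otimes g+\tilde x\otimes g\otimes g$ with $(\id\otimes\Delta)\delta(\tilde x)=1\otimes g\otimes x+1\otimes x\otimes 1+\tilde x\otimes g\otimes g$. The lemma's formulas presuppose the convention $\Delta(x)=1\otimes x+x\otimes g$, or equivalently one should write $\delta(\tilde x)=\tilde g\otimes x+\tilde x\otimes 1$. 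This inconsistency is inherited from the paper, not a flaw in your plan, but your claimed ``routine'' coassociativity check only closes under a consistent choice of convention, so state it.
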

The induced bimodule categories are $\mcat^\ecat_{a,b}:=R_{a,b}\md\mod$. These 
are the $\mathcal{L}$ in \cite{FMM14}. As a $\CC$-linear category this is 
$T_q\md\mod$ (for $b=0$) as discussed in $\vcat$ or $\Vect$ ($b\neq 0$), since 
in the latter case there is a unique simple module $M$ of dimension $\ell$.

The elements $\pcat$ for $T_q$ are particularly interesting and will be 
generalized later:
\begin{itemize}
\item Since the Taft algebra is self-dual, we have the full dualization 
$\star\in \AutBr(\zcat(T_q\md\mod))$ (i.e. $r$ for $K=1$ or equivalently $r'$ 
for $A=1$). It decomposes into $rr'$ below.
\item For the decomposition $T_q=\CC[x]\rtimes \CC[\ZZ_\ell]$ we have $K\cong 
K^*$ as Yetter-Drinfel'd Hopf algebra, so we have a partial dualization 
$r'\in\AutBr(\zcat(T_q\md\mod))$.
It acts on quantum group modules $V(\lambda)$ like a reflection.
\item For the decomposition $T_q=\CC[x]\rtimes \CC[\ZZ_\ell]$ we also have 
$A\cong A^*$ and $K\cong \Omega(K)$, so we also have a partial dualization 
$r\in \AutBr(\zcat(T_q\md\mod))$. 
\end{itemize}

\section{Applications}\label{sec_applications}

\subsection{Quantum groups and Nichols algebras}

We now discuss some applications of the previously defined general elements if 
applied to quantum groups.~ \\

\subsubsection{$\AutBr$ of nonabelian groups and Nichols algebras}~\\

We begin with a little demonstration of the effect of our subgroups of 
$\BrPic(\Rep(G))$ as subgroups $\AutBr(H)$ after the ENOM functor. Namely, the 
Nichols algebra $\Nic(M)$ associated to some $M\in \D H\md\mod$ is a 
fundamental construction with a universal property. It returns e.g. the Borel 
part of the quantum group $U_q(\g)^+$ over $H=\CC[\ZZ_\ell^{\mathrm{rank}}]$.\\

Thus it is as a vector space invariant under $\AutBr(\D H\md\mod)$. We want to 
argue that this completely explains certain coincidences in dimension that 
appeared during the classification of finite-dimensional Nichols algebras over 
nonabelian groups $G$. Some of these cases have been known\footnote{SL is 
indebted to E. Meir for explaining this to him.} and the only purpose of this 
section is to collect and unify the argument using our explicit results on 
$\BrPic(G)$ in the previous section. 

Needless to say, this game of changing the realizing group 
does not reveal much 
information about the Nichols algebra. 

\begin{itemize}
 \item The following was the first explained coincidence in terms of Doi twist 
in \cite{Ven12}:\\
Let $G=\SS_4$ and 
consider $V=\ocat_{(12)}^{-+}$ and $V'=\ocat_{(12)}^{--}$ where $\pm\pm$ 
indicates the $1$-dimensional character of the centralizer 
$\langle(12),(34)\rangle$. Our results show that these two objects are 
interchanged by the braided autoequivalence in $\bcat$ induced from the 
nontrivial $2$-cocycle of $\SS_4$ (which restricts to a nontrivial class on the 
centralizer).  Both Nichols algebras have dimension $24^2$.\\
 More generally let $G=\SS_n$ and consider  $V=\ocat_{(12)}^{-+}$ and 
$V'=\ocat_{(12)}^{--}$ where the centralizer is $\ZZ_2\times \SS_{n-2}$. Then 
again these two objects are interchanged by the 	unique $2$-cocycle 
inducing up to $\bcat$. In case $n=5$ both Nichols algebras are of finite 
dimension $4^45^26^4$. 
 \item Let $G=\SS_4$ and consider $V=\ocat_{(12)}^{--}$ and 
$V'=\ocat_{(1234)}^{-1}$. We claim that our results show that these two elements 
are interchanged by the braided equivalence in $\ecat$ induced by the nonlazy 
monoidal autoequivalence $F$ of $\Rep(\SS_4)$ defined by $S$ the Klein-4-group 
and its unique nondegenerate 2-cocycle. Namely, as objects in $\Rep(\SS_4)$ 
these are $\sgn+\chi_2+\chi_3\cdot\sgn$ respectively $\sgn+\chi_2+\chi_3$ 
(where $1+\chi_2,\;1+\chi_3$ are the permutation characters) and $F$ 
interchanges $[(12)],[(1234)]$ and $\chi_3,\sgn\cdot\chi_3$. Again these 
Nichols algebras have dimension $24^2$. 
 \item On the other hand $V=\ocat_{(12)}^{-+}$ and $V'=\ocat_{(1234)}^{-1}$ are 
directly related by the partial dualization on $S$, which is due to a relation 
in $\BrPic$.
 \item Let $G=\ZZ_5\rtimes \ZZ_5^\times$ and consider 
$V=\ocat_{i\rtimes 2}^{-1}$ and $V=\ocat_{i\rtimes 3}^{-1}$. One can easily see 
that these two objects are interchanged by an outer automorphism. The respective 
Nichols algebras have dimension $1280$. A similar connection holds between two 
Nichols algebras over $\ZZ_7\rtimes \ZZ_7^\times$.
\item Over the dihedral group $\DD_4=\langle x,y\mid x^2=y^2=(xy)^4=1\rangle$ 
with $8$ element there are four Nichols algebras of dimension $64$, that are 
all interchanged by the Brauer Picard group, which is $\SS_4$ by \cite{NR14}. 
\end{itemize}~ \\

\subsubsection{Braided autoequivalences of quantum groups}~\\

Already the well-known fact that $\BrPic(H\md\mod)\cong 
\AutBr(\D H\md\mod)$ has interesting implications for 
$H=\Nic(M)\rtimes\CC[G]$ as we have already seen in the Taft algebra case:\\

Quasi-triangular quantum groups $u_q(\g)$ can be obtained\footnote{In 
case $q$ has even order, care has to be taken at this point} as quotients of 
$\D H$ for suitable Nichols algebras by grouplikes. So the category $\D 
H\md\mod$ has the category $u_q(\g)\md\mod$ as subcategory. For a given element 
in $\BrPic(\Nic(M)\rtimes\CC[G])$ we can ask whether the braided 
autoequivalence associated by the ENOM functor fixes this subcategory, so we 
obtain a braided autoequivalence of $u_q(\g)\md\mod$.

This question seems quite easy to answer (and usually to answer positively) 
because it involves only knowledge about the action of the grouplikes $\CC[G]$ 
resp. $\CC[G\times G]$ in the double: More precisely, a sufficient condition is 
that the braided autoequivalence preserves the forgetful functor to $\D 
G\md\mod$, as is e.g. the case for the interesting elements in $\ecat$ we 
discuss below. More general criteria could be given. \\

\subsubsection{Induction images $\bcat,\ecat$}~\\

Since the notion of a Nichols algebra is self-dual, it suffices to restrict to 
study to $\ecat$ (compare to the Taft algebra), so we wish to know the Bigalois 
objects. This is in general difficult, but there has been significant 
progress in the context of liftings of Nichols algebras, which we want to 
briefly comment on:\\

A long-standing question is to classify algebras $L$ 
with $\gr(L)=\B(M)\rtimes \CC[G]$ and the conjecture stands, that all of these 
algebras are related by a $2$-cocycle Doi twist i.e. there exists a 
$L$-$H$-Bigalois object and hence there is a monoidal equivalence  
$F:\B(M)\rtimes \CC[G]\md\comod\to  L\md\comod$, see \cite{M01}\cite{AAIMV13}. 
In the former paper this has been observed for quantum groups, where the 
classification of pointed Hopf algebras produces families with free 
lifting parameters, which turn out to however all be related by $2$-cocycle 
deformations. In the latter paper an impressive program has been presented to 
systematically determine all different liftings for a given Nichols algebra.\\

Thus: For a given Nichols algebra, e.g. $u^+_q(\g)$, the $\BrPic$-groupoid 
contains large (multi-parameter) families of objects $L$ with different 
liftings, e.g. with deformed relations like $E_{\alpha_i}^{N_i}=\mu_i\in \CC$, 
all of which are connected by elements in $\ecat$. Note that this gives 
bimodule categories between categories $H\md\mod$ and $L\md\mod$ that are very 
different as categories.\\

\begin{remark}
  From a physical perspective it very interesting to study such defects 
  between different phases labeled $H\md\mod$ and $L\md\mod$, in particular  
  where $H$ is the Borel part of a quantum group and $L$ is a different 
  lifting. Take for example the relation $E_{\alpha_i}^{N_i}=\mu_i$, which  
  resembles closely what one has in finite $W$-algebras. All different liftings 
  of this type come from different subcategories (sectors) 
  of the Kac-Procesi-DeConcini-Quantum group where $E_{\alpha_i}^{N_i}$ is a 
  central element. The subcategories are enumerated by collections of $\mu_i$ 
  that are in bijection to points of the complex Lie group associated to $\g$. 
  In this view, all these bimodule categories (defects) between different 
  categories can actually be collected to bimodule categories between this new 
  large category.        
\end{remark}

Needless to say, these are not the only objects in $\BrPic$, at least not for 
general Nichols algebras, as the reflections $\pcat$ in the next two sections 
show. \\

\subsubsection{Partial dualization on the Cartan part}~\\

We want to now more thoroughly treat partial dualization on the Cartan part of a 
quantum Borel part of a quantum group $U_q(\g)$ and find relations to the 
$L$-dual of the respective Lie group, at least in the simply-laced case. 
We assume that the TFT side of our construction is actually related to 
$T$-duality; this could explain why an $L$-dual appears, see \cite{DE14}:\\

Let $H=U_q(\g)^\geq=U_q(\g)^+\rtimes \CC[\Lambda]$ where $\Lambda=\ZZ^{\rank}$ 
is a lattice (resp. a quotient at roots of unity) sitting between root- and 
weight-lattice of $\g$ i.e. $\Lambda_W\supset \Lambda\supset \Lambda_R$. The 
embedding and the scalar product determine the Yetter-Drinfel'd structure of 
$U_q(\g)^+$ via
$$K_\lambda.E_\alpha=q^{(\lambda,\alpha)} E_\alpha
\qquad \delta(E_\alpha)\mapsto K_\alpha\otimes E_\alpha$$
The choice of $\Lambda$ is parametrized by a subgroup of 
$\Lambda_W/\Lambda_R=\pi_1$ which determines the fundamental group of the 
respective complex Lie group, which parametrizes different topological 
coverings. Correspondingly the usual choice $\Lambda=\Lambda_R$ is the adjoint 
form and $\Lambda=\Lambda_W$ is often called the simply-connected form. \\

\begin{lemma}
 For $\g$ simply laced partial dualization $r$ on the Cartan part $\CC[\Lambda]$ 
interchanges $U_q(\g)^+\rtimes \CC[\Lambda]$ and $U_q(\g)^+\rtimes 
\CC[\Lambda^\vee]$; e.g. it interchanges adjoint and simply-connected form. In 
particular for small quantum groups at an $\ell$-th root of unity it 
interchanges  $u_q(\g)^+\rtimes \CC[\Lambda/\ell\Lambda^\vee]$ and 
$u_q(\g)^+\rtimes \CC[\Lambda^\vee/\ell\Lambda]$
\end{lemma}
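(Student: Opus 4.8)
The plan is to unwind the definition $r(H)=\Omega_A(K)\rtimes A^*$ from Section \ref{sec_reflection} for the Radford decomposition $H=K\rtimes A$ with $A=\CC[\Lambda]$ the Cartan part and $K=U_q(\g)^+=\Nic(M)$ the positive part, and to verify separately that the dualized grouplike part $A^*$, the transported Nichols algebra $\Omega_A(K)$, and the resulting biproduct each match $U_q(\g)^+\rtimes\CC[\Lambda^\vee]$. Since the statement is an isomorphism of Hopf algebras, it suffices to identify Yetter-Drinfel'd data on generators and invoke the universal property of $\Nic$.

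First I would identify $A^*$. As $A=\CC[\Lambda]$ is a group Hopf algebra, its characters form the dual group $\hat\Lambda=\Hom(\Lambda,\CC^\times)$, and the scalar product singles out the characters $q^{(\mu,\bullet)}$ for $\mu\in\Lambda^\vee$, where $\Lambda^\vee=\{\mu\mid(\mu,\Lambda)\subset\ZZ\}$ is the dual lattice. For simply-laced $\g$ one has $\Lambda_R^\vee=\Lambda_W$ and $\Lambda_W^\vee=\Lambda_R$, which already explains the interchange of adjoint and simply-connected form. The map $\mu\mapsto q^{(\mu,\bullet)}$ embeds $\CC[\Lambda^\vee]\hookrightarrow A^*$; in the root-of-unity case $A=\CC[\Lambda/\ell\Lambda^\vee]$ this becomes the Pontryagin-type isomorphism $A^*\cong\CC[\Lambda^\vee/\ell\Lambda]$, its kernel being exactly $\ell\Lambda$. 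Checking that $q^{(\bullet,\bullet)}$ descends to a nondegenerate pairing $\Lambda^\vee/\ell\Lambda\times\Lambda/\ell\Lambda^\vee\to\CC^\times$ (which needs the customary divisibility hypotheses on $\ell$ so the form takes integral values modulo $\ell$) is one technical point to settle.

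Next I would compute $\Omega_A(K)$. The decisive input is that, by \cite{BLS15} Thm.\ 3.20, $\Omega_A$ is a braided monoidal equivalence $\D A\md\mod\stackrel{\sim}{\to}\D A^*\md\mod$; it therefore carries $\Nic(M)$ to $\Nic(\Omega_A(M))$, so it is enough to understand $\Omega_A$ on the braided vector space $M=\bigoplus_i\CC E_{\alpha_i}$. Over an abelian group algebra the construction simply swaps the $\Lambda$-grading (the $A$-coaction) against the diagonal $A$-action: the old action $K_\lambda.E_\alpha=q^{(\lambda,\alpha)}E_\alpha$, i.e. the character identified with $\alpha\in\Lambda_R\subset\Lambda^\vee$, becomes the new $A^*$-coaction placing $E_\alpha$ in degree $\alpha\in\Lambda^\vee$, while the old coaction degree $\alpha$ becomes the new $A^*$-action $K_\mu.E_\alpha=q^{(\mu,\alpha)}E_\alpha$ for $\mu\in\Lambda^\vee$. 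These are precisely the Yetter-Drinfel'd data defining $U_q(\g)^+$ over $\CC[\Lambda^\vee]$. In particular the diagonal braiding is unchanged, $c(E_{\alpha_i}\otimes E_{\alpha_j})=q^{(\alpha_i,\alpha_j)}E_{\alpha_j}\otimes E_{\alpha_i}$, so $\Omega_A(M)\cong M$ as braided vector spaces and hence $\Omega_A(U_q(\g)^+)\cong U_q(\g)^+$; it is exactly here that simple-lacedness enters, ensuring the form identifies roots with coroots so that no passage to the Langlands dual occurs.

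Assembling the biproduct $\Omega_A(K)\rtimes A^*$ with these identifications yields $U_q(\g)^+\rtimes\CC[\Lambda^\vee]$, and the small-quantum-group statement follows by specializing to $\Lambda/\ell\Lambda^\vee$ and $\Lambda^\vee/\ell\Lambda$. I expect the main obstacle to be the explicit diagram chase behind the action/coaction swap — in particular verifying that the monoidal structure $\Omega_2$, which involves the inverse antipode $S^{-1}(K_\lambda)=K_{-\lambda}$ of $A$, transports the $q$-commutation relations of $U_q(\g)^+$ to those of the target rather than to a twisted form — together with the nondegeneracy of the lattice pairing in the modular case. Both amount to bookkeeping once the braided-equivalence principle is in place, but they are precisely where the simply-laced hypothesis and the conditions on $\ell$ are genuinely used.
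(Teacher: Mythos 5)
Your proposal is correct and follows essentially the same route as the paper's proof: the paper likewise takes the pairing $\Lambda\times\Lambda^\vee\to\CC^\times$, $\lambda\otimes\mu\mapsto q^{(\lambda,\mu)}$, notes it descends to a nondegenerate pairing $\Lambda/\ell\Lambda^\vee\times\Lambda^\vee/\ell\Lambda\to\CC^\times$, and verifies that partial dualization interchanges action and coaction via $K_\lambda._{r}E_\alpha=\langle K_\lambda,K_\alpha\rangle E_\alpha=q^{(\lambda,\alpha)}E_\alpha$, relying on the $\Omega_A$ machinery of \cite{BLS15} exactly as you do. Your write-up merely spells out details the paper compresses (the identification $A^*\cong\CC[\Lambda^\vee/\ell\Lambda]$, the invariance of the diagonal braiding forcing $\Omega_A(M)\cong M$, and the divisibility caveats on $\ell$), all consistent with the published argument.
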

\begin{proof}
 In the case of the Taft algebra this has been checked explicitly in our paper 
\cite{BLS15}. Take the obvious group pairing $\Lambda\times 
\Lambda^\vee\to\CC^\times$ given by $\lambda\otimes \mu\mapsto 
q^{(\lambda,\mu)}$. It gives in particular rise to a nondegenerate group 
pairing:  
 $$\Lambda/\ell\Lambda^\vee\times \Lambda^\vee/\ell\Lambda\to \CC^\times$$
 We need to convince ourselves that this dualization interchanges action and 
coaction. But this is clearly true
 $$K_\lambda._{r}E_\alpha :=\langle K_\lambda,K_\alpha \rangle E_\alpha= 
q^{(\lambda,\alpha)}\;E_\alpha$$
\end{proof}

\begin{corollary}
 Partial dualization as discussed above gives rise to the braided category 
equivalence between the Drinfel'd centers of $u_q(\g)^+\rtimes 
\CC[\Lambda/\ell\Lambda^\vee]$ and $u_q(\g)^+\rtimes 
\CC[\Lambda^\vee/\ell\Lambda]$. It restricts to a braided category equivalence 
between the respective quantum groups $u_q(\g)$ associated to $\Lambda$ and 
$\Lambda^\vee$.
\end{corollary}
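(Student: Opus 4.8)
The plan is to separate the two assertions. The first---that partial dualization yields a braided equivalence between the two Drinfel'd centers---is essentially a restatement of the preceding Lemma. The general machinery of Section \ref{sec_reflection} (resp. \cite{BLS15} Thm. 3.20) already produces, for any semidirect decomposition $H=K\rtimes A$, a braided equivalence $r:\D H\md\mod\to \D\,r(H)\md\mod$; applying it to $H=u_q(\g)^+\rtimes\CC[\Lambda/\ell\Lambda^\vee]$ with $A=\CC[\Lambda/\ell\Lambda^\vee]$ and $K=u_q(\g)^+$, and invoking the Lemma's identification $r(H)\cong u_q(\g)^+\rtimes\CC[\Lambda^\vee/\ell\Lambda]$, gives the claim at once. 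So I would simply cite these two inputs and move on.

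The real content is the restriction to the quantum group subcategories. First I would pin down $u_q(\g)\md\mod$ as a full braided subcategory of $\D H\md\mod$: as recalled in the subsection on braided autoequivalences of quantum groups, $u_q(\g)$ is the quotient of $\D H$ by the central grouplikes identifying the two Cartan copies $\CC[\Lambda]$ and $\CC[\Lambda]^*$ (in the Taft case the quotient by $gg^*-1$). On Yetter--Drinfel'd modules this cuts out exactly those $M\in\D H\md\mod$ for which the $A$-action is prescribed by the $A$-coaction grade through the nondegenerate Cartan pairing $\Lambda/\ell\Lambda^\vee\times\Lambda^\vee/\ell\Lambda\to\CC^\times$ furnished by the Lemma; call this the \emph{matching condition}. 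Since this condition is visible entirely on the group (i.e. $\D A\md\mod$) part of the data, the criterion stated there reduces the restriction claim to checking that $r$ carries the matching condition for $\Lambda$ to the matching condition for $\Lambda^\vee$.

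The key step is the observation---already implicit in the proof of the Lemma, where $K_\lambda._{r}E_\alpha=\langle K_\lambda,K_\alpha\rangle E_\alpha$---that partial dualization on the Cartan part interchanges the $A$-action and the $A$-coaction, while leaving the $K=u_q(\g)^+$-part essentially untouched (it is merely transported by $\Omega_A$ inside the braided base $\D A\md\mod$). Under this swap the matching condition ``$A$-action determined by $A$-coaction via the pairing'' on the $H$-side is carried precisely to ``$A^*$-coaction determined by $A^*$-action via the same pairing'' on the $r(H)$-side, which is the defining condition of $u_q(\g)\md\mod$ for $\Lambda^\vee$. Hence $r$ maps the subcategory into the subcategory; applying the same argument to the inverse partial dualization shows the restriction is essentially surjective, and braidedness is inherited for free, since both subcategories carry the restricted center braiding, which coincides with the quasi-triangular braiding of $u_q(\g)$. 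This yields the asserted braided equivalence.

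The main obstacle I anticipate is bookkeeping rather than conceptual: one must verify that the central subgroup of grouplikes defining the quotient $\D H\twoheadrightarrow u_q(\g)$ is carried \emph{onto} the corresponding central subgroup on the $r(H)$-side, and this is exactly where nondegeneracy of the pairing on $\Lambda/\ell\Lambda^\vee\times\Lambda^\vee/\ell\Lambda$ enters. A genuine point of care, flagged by the footnote on even order of $q$, is that when $q$ has even order the relevant central grouplike can involve a sign or square-root, so the matching condition and its image must be tracked with the correctly normalized pairing; I would settle the odd-order case first and handle the sign corrections separately.
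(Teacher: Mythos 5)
Your proposal is correct and follows essentially the same route the paper intends: the paper offers no separate proof of this corollary, treating the first assertion as immediate from the preceding Lemma together with the general partial-dualization machinery of Section \ref{sec_reflection}, and the restriction claim as an instance of the criterion sketched in the subsection on braided autoequivalences of quantum groups (namely that one only needs to track the action of the grouplikes in the double --- exactly your ``matching condition'' that the Cartan action be prescribed by the coaction grade via the nondegenerate pairing, which $r$ visibly swaps). Your separate handling of the even-order case is consistent with the paper's own footnote flagging that caveat.
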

\begin{corollary}
 Partial dualization as discussed above is the image under the ENOM functor of 
the module category $\mcat_r:=u_q(\g)^+\md\mod$, which is as $\CC$-linear 
category the Nichols algebra representation category and has a nontrivial 
bimodule category structure defined by $q^{(\lambda,\mu)}$ for 
$\lambda\in\Lambda$ and $\mu\in\Lambda^\vee$
\end{corollary}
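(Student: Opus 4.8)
The plan is to obtain this corollary as the specialization of the Corollary of Section~\ref{sec_reflection} describing the $r$-dualization, applied to the Radford decomposition $H=u_q(\g)^+\rtimes\CC[\Lambda]$ with $K=u_q(\g)^+$ and $A=\CC[\Lambda]$, and combined with the identification $A^*=\CC[\Lambda^\vee]$ together with its evaluation pairing as supplied by the preceding Lemma.

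First I would invoke that Corollary, which already identifies the ENOM-preimage of $\D H\md\mod\to\D\,r(H)\md\mod$ as the module category $(K\md\mod)_{\Omega_A^{-1}}$: both the left $H\md\mod$- and the right $r(H)\md\mod$-action are given by $\otimes_\CC$ in $K\md\mod$ (forgetting the $A$- resp.\ $A^*$-module structure and precomposing the right action with $\Omega_A^{-1}$), while the bimodule constraint is the one built in the Lemma from the pairing $A\otimes A^*\to\CC$. Since precomposition with $\Omega_A^{-1}$ only retwists the Yetter-Drinfel'd data and leaves the underlying abelian category untouched, $\mcat_r$ is as a $\CC$-linear category simply $K\md\mod=u_q(\g)^+\md\mod$, the Nichols algebra representation category; this is the first assertion.

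Next I would pin down the pairing. The preceding Lemma shows that partial dualization on the Cartan part uses the nondegenerate group pairing $(\lambda,\mu)\mapsto q^{(\lambda,\mu)}$ on $\Lambda/\ell\Lambda^\vee\times\Lambda^\vee/\ell\Lambda$, which is exactly the evaluation $A\otimes A^*\to\CC$ under $A^*=\CC[\Lambda^\vee]$. Substituting this into the bimodule constraint of the Lemma yields the structure defined by $q^{(\lambda,\mu)}$ asserted in the statement.

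The one point still requiring genuine verification is that the abstract constraint of the Lemma — expressed through the action, coaction, evaluation and coevaluation diagrams inside the braided base $\mathcal{X}=\D A\md\mod$ — collapses, for the cocommutative group algebra $A=\CC[\Lambda]$, to multiplication by the scalar $q^{(\lambda,\mu)}$ on $\Lambda$-homogeneous components. I expect this to be the main (if routine) obstacle: because $A=\CC[\Lambda]$ is simultaneously commutative and cocommutative, every structure map in the diagram acts diagonally on the $\Lambda$-grading and the evaluation/coevaluation simply contract a $K_\lambda$ against a $K_\mu$, so the whole morphism reduces to scalar multiplication by the pairing value $q^{(\lambda,\mu)}$. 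The coherence conditions of the Lemma are then inherited verbatim, no condition beyond those of the Lemma is imposed, and the stated bimodule category structure follows.
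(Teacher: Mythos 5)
Your proposal is correct and follows exactly the route the paper intends: the paper gives no separate proof for this corollary, treating it as the direct specialization of the Corollary in Section~\ref{sec_reflection} (the ENOM-preimage $(K\md\mod)_{\Omega_A^{-1}}$ with bimodule constraint from the pairing of $A,A^*$) to $K=u_q(\g)^+$, $A=\CC[\Lambda/\ell\Lambda^\vee]$, with the pairing $q^{(\lambda,\mu)}$ supplied by the preceding Lemma. Your additional unpacking --- that $\Omega_A^{-1}$ leaves the underlying abelian category untouched and that the graphical constraint collapses to the scalar $q^{(\lambda,\mu)}$ on $\Lambda$-homogeneous components since $A$ is a commutative cocommutative group algebra --- is exactly the verification the paper leaves implicit.
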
~\\

\subsubsection{Partial dualization and Weyl reflection}~\\

We now turn our attention to reflections of the Nichols algebra in the original 
sense: Let $M=\bigoplus_i M_i$ a decomposition of the object $M$ into simple 
objects, then $\alpha_i$ is a simple root for the Nichols algebra $\B(M)$ 
in the sense of \cite{AHS10}. For example for the 
semisimple complex finite-dimensional Lie algebra $\g$ we have 
$U_q(\g)^+=\Nic(M)$, resp. $U_q(\g)^+=\Nic(M)$ for roots of unity, for a 
choice of a $\ZZ^{\rank}$-Yetter-Drinfeld module $M=\bigoplus_i 
E_{\alpha_i}\CC$ where $\alpha_i$ a simple root in the usual sense.\\ 

Then the reflection of this Nichols algebra is the special 
case of a partial dualization $r$ with respect to the projection, see 
\cite{HS13}\cite{BLS15}
$$\pi_i:\;\Nic(M)\to \Nic(M_i)\qquad \Nic(M)=\Nic(M)^{\coin\pi}\rtimes 
\Nic(M_i)$$
For semisimple Lie algebras there is an algebra isomorphism $r(\Nic(M))\cong 
\Nic(M)$, namely Lusztig's reflection automorphism $T_{w_i}$ for the simple 
reflection $w_i$, but for general Nichols algebras these two algebras can be 
non-isomorphic. Nevertheless our results {\it (in cit. loc.)} show in all cases 
 a category equivalence
$$\zcat(\Nic(M)\md\mod)\cong \zcat(r(\Nic(M))\md\mod)$$
In particular for the Lie algebra case this restricts to a braided equivalence 
$T_{w_i}:U_q(\g)\md\mod\to U_q(\g)\md\mod$ and more general for every Weyl 
group element $w\in W$.\\

We now discuss the $\Nic(M)\md\mod$-$r(\Nic(M))\md\mod$-bimodule 
categories associated to these partial dualizations. This is interesting 
already in the Lie algebra case: Our results in Section \ref{sec_reflection} 
show that the preimage of there is a bimodule category
$$\mcat_{w_i}:=\Nic(M)^{\coin\pi}\md\mod$$
With left resp. right categorical action by $\Nic(M)\md\mod$ resp. 
$r(\Nic(M))\md\mod$, forgetting $\Nic(M)$ resp. $\Nic(M^*)$-action, and a 
nontrivial bimodule category constraint $(V\otimes M)\otimes W\cong V\otimes 
(M\otimes W)$ given by the evaluation map $\Nic(M)\otimes \Nic(M^*)\to \CC$.

\begin{remark} Iterating this procedure yields for every Weyl group element 
$w\in W$ a bimodule category 
$$\mcat_w:= U^+[w]\md\mod$$
It is worth mentioning that 
these are precisely the homogeneous coideal subalgebras of $U^+(\g)$; so it 
would be interesting to consider (and recognize in our ansatz) bimodule 
categories for all coideal subalgebras $C$, which are classified by \cite{HK11} 
to be character shifts $C=(\id\otimes \chi)\Delta U^+[w]$.  
\end{remark}~ \\

\subsection{Defects in 3D topological field theories}~\\

\newcommand{\Bord}{\mathrm{Bord}}
\noindent
An oriented $(3,2,1)$-extended TQFT is a symmetric monoidal weak $2$-functor: 
$$Z: \Bord^{or}_{3,2,1} \to 2\Vect $$                  
where $\Bord^{or}_{3,2,1}$ is the symmetric monoidal bicategory of oriented 
$3$-cobordisms and $2\Vect$ the symmetric monoidal bicategory of 
Kapranov-Voevodsky 2-vector spaces, thus objects of $2\Vect$ are $k$-linear, 
abelian, semisimple categories, morphisms are $k$-linear functors and 
$2$-morphisms are natural transformations. (See \cite{KV94}, \cite{Mo11} and the 
Appendix of \cite{BDSV15} for more details on $2\Vect$ and other targets). \\ 
Oriented $(3,2,1)$-extended TQFTs are classified by anomaly free modular tensor 
categories (by Thm. 2 in \cite{BDSV15}), where a functor $Z$ corresponds to the 
anomaly free modular tensor category $Z(S^1)$, which we also refer to as the 
category of bulk Wilson lines. For general modular tensor categories, such 
theories are called Reshetikhin-Turaev type theories. In the case the modular 
tensor category is $Z(S^1)=\zcat(\cat)$, the Drinfeld center of some fusion 
category $\cat$, such theories are called Turaev-Viro type theories. One can use 
the Reshetikhin-Turaev construction \cite{RT91}, which is essentially based 
on surgery on $3$-manifolds along links, to define a Reshetikhin-Turaev type 
theory explicitly.

A special case are {\it Dijkgraaf-Witten theories} with $Z(S^1) = 
\zcat(\Vect^{\omega}_G)$ where $\Vect^{\omega}_G$ is the category of $G$-graded 
vector spaces for some finite group $G$ and non-trivial associativity 
constraints determined by $3$-cocycles $\omega \in \Z^3(G,k^\times)$. If $\omega 
= 1$ the Dijkgraaf-Witten theory is called untwisted. Dijkgraaf-Witten 
theories can be realized explicitly by linearizing the category of 
principal $G$-bundles on a manifolds i.e. 
$$Z(\Sigma):=\mathrm{Fun}(\mathrm{Bun}_G(\Sigma),\Vect)
\qquad \mathrm{Bun}_G(\Sigma)\cong \mathrm{Hom}(\pi_1(\Sigma),G)$$
and $\Z(M)$ by so-called pull-push-construction, that sums over all possible 
continuations of bundles on $\Sigma$ to $M$, see e.g. \cite{FPSV14}.\\

We now consider additional data on the manifold, namely {\it surface defects}: 
These are codimension $1$ submanifolds. Suppose for example 
$\Sigma_\text{Transm}=S^1\times [-1,1]$ and a middle circle 
belonging to a defect $d$, then the two bounding circles get assigned some 
$Z(S^1\times \{-1\})=\zcat(\cat)$ and $Z(S^1\times \{1\})=\zcat(\dcat)$ and the 
defect a bimodule category $Z(S^1\times \{0\})={_\cat}\mcat_\dcat$. On 
the other hand the TFT assigns to this situation a (due to the defect possibly 
nontrivial) morphism $\zcat(\cat)\to\zcat(\dcat)$:
\begin{center}
\includegraphics[scale=0.4]{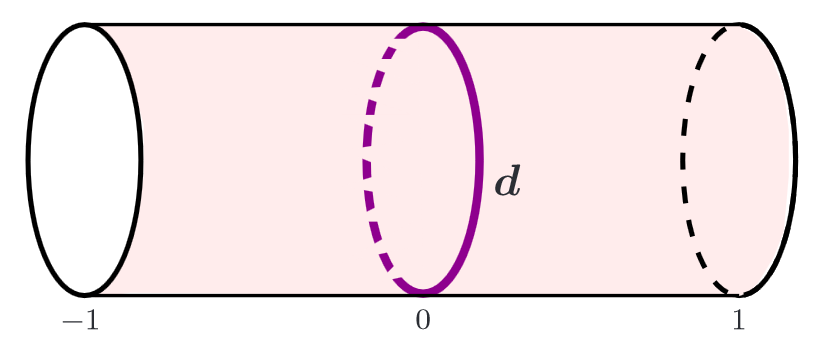}
\end{center}

This becomes a monoidal functor with the monoidal structure given by 
$Z(M_{pants\;defects})$ for the following $3$-manifold with defect: (the 
cylinder has been flattened to a annulus) 

\begin{center}
\includegraphics[scale=0.4]{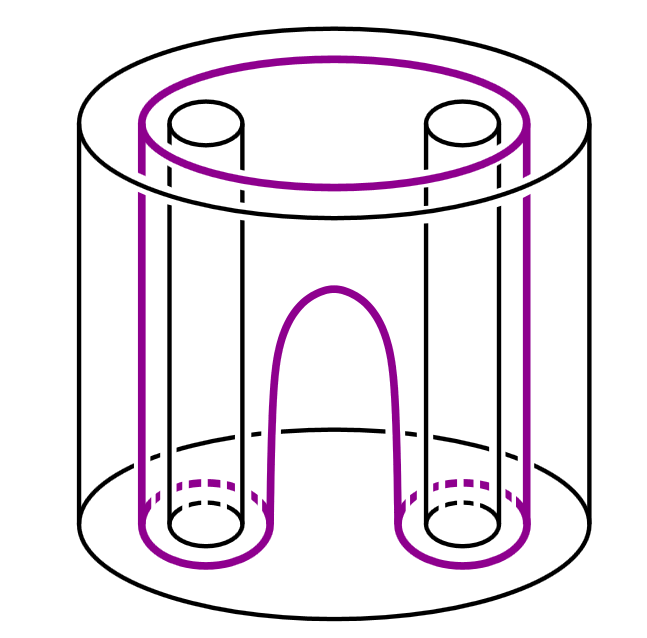}
\end{center}

The coherence condition is checked by noticing that the following two 
manifolds are diffeomorphic:

\begin{center}
\includegraphics[scale=0.3]{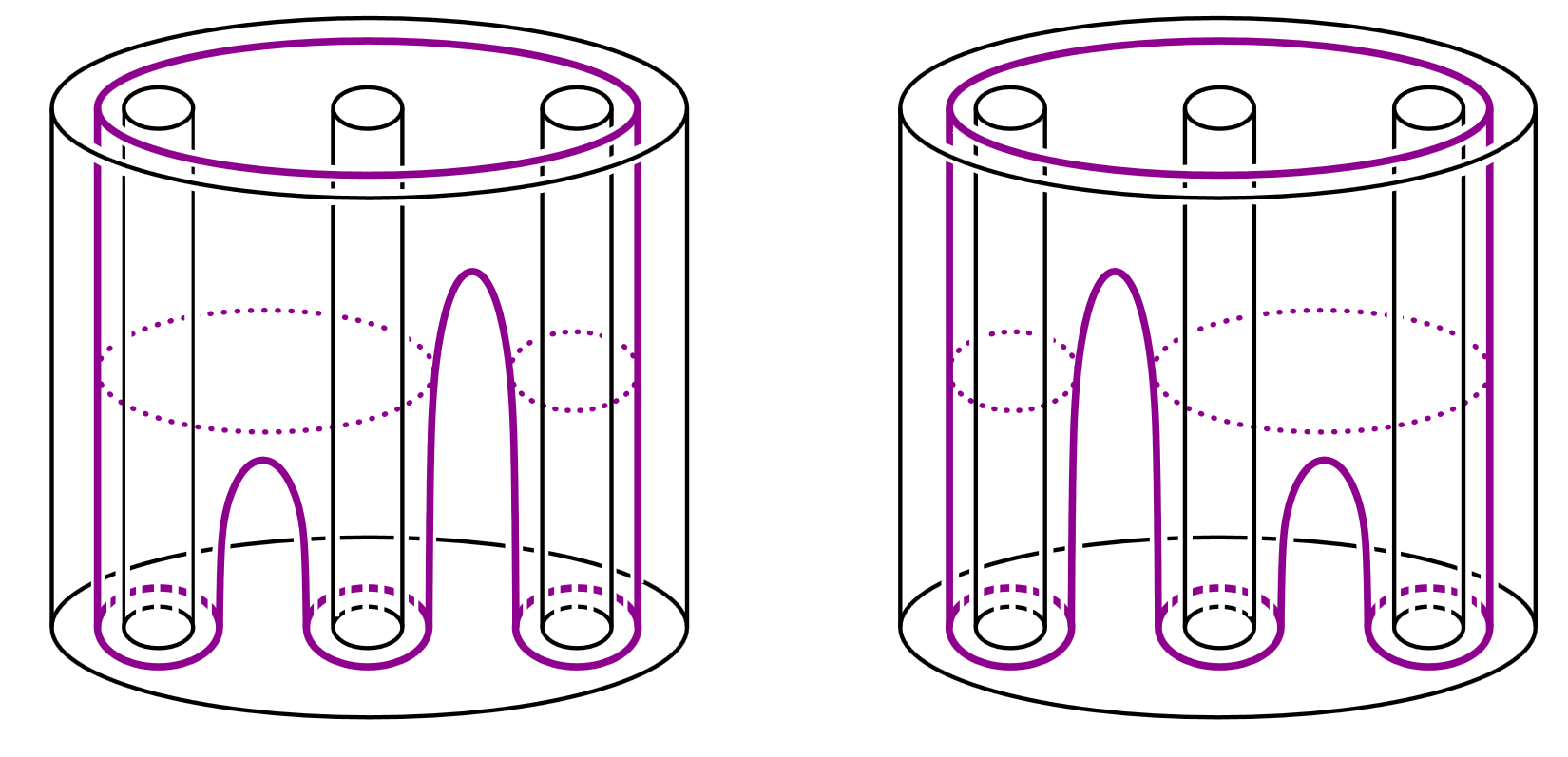}
\end{center}

and the following two diffeomorphic manifolds show the functor 
$Z(\Sigma_\text{Transm})$ is braided:

\begin{center}
\includegraphics[scale=0.3]{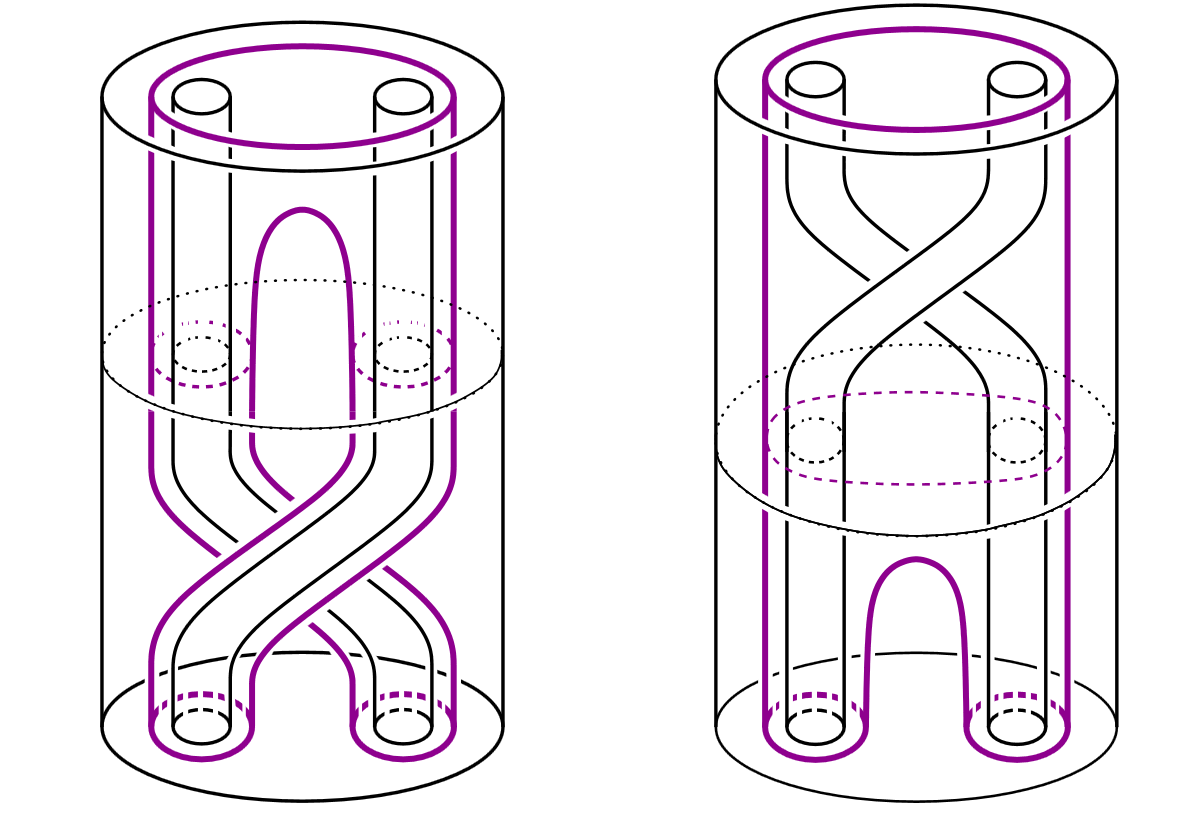}
\end{center}

For details we refer to 
\cite{FPSV14}.  We repeat their very interesting question  
linking this natural functor from the TFT construction to the ENOM functor, 
which they solve in the case $\Vect_G$ for $G$ abelian by explicit calculation 
using the bundle construction:
\begin{question}
  Does the assignment of the functor 
  $Z(\Sigma_\text{Transm}):\zcat(\cat)\to\zcat(\dcat)$ to an exact invertible 
  bimodule category ${_\cat}\mcat_\dcat$ coincide with ENOM functor?
\end{question}

The results of the present article give many new families of examples for such 
situations. The final hope is, that there are three types of defects and every 
defect can be written as a product. This would also open the possibility of 
checking the previous question explicitly for the given subgroups.\\

The TFT approach is also a reason for insisting in the formulation of 
exact invertible $\cat$-$\dcat$-bimodule categories with $\cat\neq\dcat$: As we 
saw, for quantum groups many of the interesting examples appear between 
different categories - an effect that is present (but rare) for group examples, 
see Example \ref{exm_exoticReflection}. From a physics perspective, it is very 
natural to assign different categories to different ``phases regions'' i.e. 
connected regions separated by defects.\\

\subsection{Outlook: Group-theoretic extensions}~\\

\noindent
By \cite{ENO09} group-theoretic extensions
$$\dcat=\bigoplus_{t\in\Sigma} \dcat_t\qquad \dcat_1=\cat$$
of the category $H\md\mod$ by $\Vect_\Sigma$ are 
associated to homomorphisms $\psi:\Sigma\to \BrPic(\cat)$ (plus 
additional coherence data we omit here) with $\dcat_t=\psi(t)$ a 
$\cat$-$\cat$-bimodule category.\\

We finally sketch briefly what the result is for $\cat=H\md\mod$ when $\psi$ 
lands in our three subgroups $\bcat,\ecat,\langle\pcat\rangle$ in 
$\BrPic(H\md\mod)$. The idea is that there are essentially three types of 
generic group-theoretic extensions associated to the three subgroups:\\

Let $\psi:\Sigma\to \bcat=\Ind(\AutMon(H\md\mod))$. This is the trivial case 
considered by several authors: All the bimodule categories are 
$\dcat_t={_{F_t}}H\md\mod$ so $\dcat=\Vect_\Sigma\boxtimes \cat$, while 
$\Sigma\to\AutMon(\H\md\mod)$ gives a categorical action and accordingly is the 
tensor product defined.

\begin{example}
 Take the case $\vcat$ i.e. let $v\in\AutHopf(H)$ of order $n$ and let 
$\Sigma=\langle v\rangle$ and $\psi$ just the identity. Then the associated 
category is
$$\dcat=\Vect_\Sigma\boxtimes H\md\mod
=\bigoplus_{i=0}^{n-1} H\md\mod$$
with a tensor product $X_i\otimes Y_j=(X\otimes v^i(Y))_{i+j}$. Hence $\dcat$ 
should be the representations of the cosmash product Hopf algebra 
$\ZZ_n\ltimes H$, with $\ZZ_n$-coaction on $H$ given by $v$, which is as an 
algebra just $\ZZ_n\otimes H$.
\end{example}

Let $\psi:\Sigma\to \ecat=\Ind(\AutMon(H^*\md\mod))=\Bigal(H)$. Then 
$\dcat=\tilde{H}\md\mod$ where the new Hopf algebra is as an algebra
 $$\tilde{H}=\bigoplus_{t\in\Sigma} R_t$$
with Bigalois objects $R_t$. This type of extensions has been considered in the 
first authors work \cite{Len12}, in particular in its application to construct 
new Nichols algebras.

\begin{example}
Let $\Sigma^*\to G\to \Gamma$ a central extension of groups, then associated 
one has a $2$-cocycle in $Z^2(\Gamma,\Sigma^*)$ and hence a homomorphism 
$\phi:\Sigma\to Z^2(\Gamma,\CC^\times)$. We viewing the target as the subgroup 
of $\bcat$ for $H=\CC[\Gamma]$. Then our construction returns bimodule 
categories $\dcat_t=R_t\md\mod$ for Bigalois objects being twisted group 
rings $R_t=\CC_{\phi(t)}[\Gamma]$ and overall we get 
$$\tilde{H}=\CC[G]
\qquad \dcat=\Rep(G) =\bigoplus_{t\in\Sigma} \CC_{\phi(t)}[\Gamma] 
\md\mod $$
For example $\CC[\DD_4]=\CC[\ZZ_2^2]\oplus \CC_\sigma[\ZZ_2^2]$ and 
$\dcat=\Rep(\DD_4)$ is a $\ZZ_2$-extension of $\Rep(\ZZ_2^2)$.  
\end{example}
\begin{example}[\cite{Len12}]
Let $\phi:\Sigma\to Z^2(\Gamma,\CC^\times)$ as above and $\Nic(M)$ a Nichols 
algebra over $\Gamma$, and assume we are given a so-called twisted symmetry 
action of $\Sigma$ on $\Nic(M)$. Then this data gives rise to a homomorphism 
$$R_t:\Sigma\to \Bigal(\Nic(M)\rtimes \CC[\Gamma])$$
and our construction returns $$\tilde{H}=\Nic(\tilde{M})\rtimes \CC[G]
\qquad \dcat=\tilde{H}\md\mod =\bigoplus_{t\in\Sigma} R_t\md\mod$$
where $\Nic(\tilde{M})$ is a Nichols algebra over the centrally extended group 
$G$.
\end{example}

Let $\Sigma=\ZZ_2$ and $\psi(g)=r$ a partial dualization on a semidirect 
product decomposition $H=K\rtimes A$ with $K$-self-dual. Then again 
we obtain a group-theoretical extension
$$\dcat=\dcat_1\oplus \dcat_r= H\md\mod \times A\md\mod$$
\begin{example}[\cite{ENO09} Sec. 9.2]
 Let $A=1$, e.g. for $H=K=\CC[G]$ with $G$ abelian. Then $\dcat$ is a 
Tambara-Yamigami category with $\dcat_1=\cat$ pointed and $\dcat_r$ consisting 
of a unique simple object. 
\end{example}
\begin{question}
  What are the category extension associated to $H=U_q(\g)^+$ and the 
homomorphism $\phi:W\to \BrPic(H)$ with $W$ the Weyl group generated by all 
reflections $r_i$? (or say a cyclic subgroup generated by a single element $w$) 
\end{question}

\noindent{\sc Acknowledgments}: 
We are grateful to C. Schweigert for many helpful
discussions. The authors are partially supported by the DFG Priority 
Program SPP 1388 ``Representation Theory'' and the Research Training Group 1670 
``Mathematics Inspired by String Theory and QFT''. 
S.L. is currently on a research stay supported by DAAD PRIME, funded by BMBF and 
EU Marie Curie Action.

\end{document}